\documentclass[aop,preprint]{imsart}

\RequirePackage[OT1]{fontenc}
\RequirePackage{amsthm,amsmath,natbib}
\RequirePackage[colorlinks=false]{hyperref}
\RequirePackage{hypernat}

\numberwithin{equation}{section}

\usepackage{enumerate}
\usepackage{amssymb}
\usepackage{comment}
\usepackage{appendix}
\usepackage{graphicx}
\usepackage{xr}
\usepackage{subfigure}

\externaldocument{gl_clt_aop}

\input xy
\xyoption{all}

\startlocaldefs

\newtheorem{theorem}{Theorem}[section]
\newtheorem{lemma}[theorem]{Lemma}
\newtheorem{proposition}[theorem]{Proposition}

\newtheorem{remark}[theorem]{Remark}

\newcommand{\B}{\mathbf{B}}
\newcommand{\C}{\mathbf{C}}

\newcommand{\E}{\mathbf{E}}

\newcommand{\h}{\mathbf{H}}

\newcommand{\Z}{\mathbf{Z}}
\newcommand{\p}{\mathbf{P}}
\newcommand{\Q}{\mathbf{Q}}
\newcommand{\R}{\mathbf{R}}

\newcommand{\CA}{\mathcal {A}}
\newcommand{\CB}{\mathcal {B}}
\newcommand{\CC}{\mathcal {C}}
\newcommand{\CD}{\mathcal {D}}
\newcommand{\CE}{\mathcal {E}}
\newcommand{\CF}{\mathcal {F}}

\newcommand{\CK}{\mathcal {K}}
\newcommand{\CL}{\mathcal {L}}

\newcommand{\CQ}{\mathcal {Q}}

\newcommand{\CU}{\mathcal {U}}
\newcommand{\CV}{\mathcal {V}}

\newcommand{\CZ}{\mathcal {Z}}

\newcommand{\CH}{\mathcal {H}}

\newcommand{\ext}{{\rm ext}}

\newcommand{\dist}{{\rm dist}}

\newcommand{\diam}{{\rm diam}}
\newcommand{\var}{{\rm Var}}
\newcommand{\im}{{\rm Im}}

\newcommand{\cov}{{\rm Cov}}

\newcommand{\SLE}{{\rm SLE}}

\newcommand{\internal}{{\rm int}}

\newcommand{\one}{\mathbf{1}}

\newcommand{\ol}{\overline}

\newcommand{\wh}{\widehat}
\newcommand{\wt}{\widetilde}

\endlocaldefs

\begin{document}

\title{Universality for $\SLE(4)$}
\runtitle{Universality for $\SLE(4)$}

\begin{aug}

% 1) bibliography
% 2) correct the entropy inequality
% 3) Check the figure for the mean

\author{\fnms{Jason} \snm{Miller}\thanksref{t2}\ead[label=e2]{jmiller@math.stanford.edu}}

\thankstext{t2}{Research supported in part by NSF grants DMS-0406042 and DMS-0806211.}

\runauthor{Jason Miller}

\affiliation{Stanford University}
\address{Stanford University\\
Department of Mathematics\\
Stanford, CA 94305\\
\printead{e2}\\
\phantom{E-mail:\ jmiller@math.stanford.edu}}

\end{aug}
\date{\today}

\begin{abstract}
We resolve a conjecture of Sheffield that $\SLE(4)$, a conformally invariant random curve, is the universal limit of the chordal zero-height contours of random surfaces with isotropic, uniformly convex potentials.  Specifically, we study the \emph{Ginzburg-Landau $\nabla \phi$ interface model} or \emph{anharmonic crystal} on $D_n = D \cap \tfrac{1}{n} \Z^2$ for $D \subseteq \C$ a bounded, simply connected Jordan domain with smooth boundary.  This is the massless field with Hamiltonian $\CH(h) = \sum_{x \sim y} \CV(h(x) - h(y))$ with $\CV$ symmetric and uniformly convex and $h(x) = \phi(x)$ for $x \in \partial D_n$, $\phi \colon \partial D_n \to \R$ a given function.   We show that the macroscopic chordal contours of $h$ are asymptotically described by $\SLE(4)$ for appropriately chosen $\phi$.
\end{abstract} 

\maketitle

\section{Introduction}

The idea of statistical mechanics is to model physical systems by describing them probabilistically at microscopic scales and then studying their macroscopic behavior.  Many lattice-based planar models at criticality are believed to have scaling limits which are invariant under conformal symmetries, a reflection of the heuristic that the asymptotic behavior at criticality should be independent of the choice of the underlying lattice.  The realizations of these models tend to organize themselves into large clusters separated from each other by thin interfaces which, in turn, have proven to be interesting objects to study in the scaling limit.  The last decade has brought a number of rather exciting developments in this direction, primarily due to the introduction of SLE \cite{S01}, a one-parameter family of conformally invariant random curves which are conjectured to describe the limiting interfaces in many models.  This has now been proved rigorously in several special cases: loop-erased random walk and the uniform spanning tree \cite{LSW04}, chordal level lines of the discrete Gaussian free field \cite{SS09}, the harmonic explorer \cite{SS05}, the Ising model on the square lattice \cite{S07} and on isoradial graphs \cite{CS10U}, and percolation on the triangular lattice \cite{S01, CN06}.

One of the core principles of statistical mechanics is that of \emph{universality}: the exact microscopic specification of a model should not affect its macroscopic behavior.  There are two ways in which universality can arise in this context: stability of the limit with respect to \emph{changes to the lattice} and, the stronger notion, with respect to \emph{changes to the Hamiltonian}.  The results of \cite{LSW04, SS09, SS05, CS10U} fall into the first category.  Roughly, this follows in \cite{LSW04, SS09, SS05} since underlying the conformal invariance of the models described in these works is the convergence of simple random walk to Brownian motion, a classical result which is lattice independent and is in fact true in much greater generality.  Extending Smirnov's results on the Ising model \cite{S07} beyond the square lattice is much more challenging \cite{CS10DCI, CS10U} and it is a well-known open problem to extend the results of \cite{S01, CN06} to other lattices.

The purpose of this work is to prove the conformal invariance of limiting interfaces for a large class of random surface models that is \emph{stable with respect to non-perturbative changes to the Hamiltonian}, of which there is no prior example.

\subsection{Main Results}

Specifically, we study the massless field on $D_n = D \cap \tfrac{1}{n} \Z^2$ with Hamiltonian $\CH(h^n) = \sum_{b \in D_n^*} \CV(\nabla h^n(b))$.  Here, $D \subseteq \C$ is a bounded, simply connected Jordan domain with smooth boundary.  The sum is over the set $D_n^*$ of edges in the induced subgraph of $\tfrac{1}{n} \Z^2$ with vertices in $D$ and $\nabla h^n(b) = h^n(y) - h^n(x)$ denotes the discrete gradient of $h^n$ across the oriented bond $b=(x,y)$.  We assume that $h^n(x) = \phi^n(x)$ when $x \in \partial D_n$ and $\phi^n \colon \partial D_n \to \R$ is a given bounded function.  We consider a general interaction $\CV \in C^2(\R)$ which is assumed only to satisfy:
\begin{enumerate}
\item $\CV(x) = \CV(-x)$ (symmetry),
\item $0 < a_\CV \leq \CV''(x) \leq A_\CV < \infty$ (uniform convexity), and
\item $\CV''$ is $L$-Lipschitz.
\end{enumerate}
This is the so-called \emph{Ginzburg-Landau $\nabla \phi$ effective interface (GL) model}, also known as the \emph{anharmonic crystal}.  The first condition is a reflection of the hypothesis that our bonds are undirected. The role played by the second and third conditions is technical.  Note that we can assume without loss of generality that $\CV(0) = 0$.   The variables $h^n(x)$ represent the heights of a random surface which serves as a model of an interface separating two pure phases.  The simplest case is $\CV(x) = \tfrac{1}{2} x^2$, which corresponds to the so-called discrete Gaussian free field, but our hypotheses allow for much more exotic choices such as $\CV(x) = 4x^2 + \cos(x) + e^{-x^2}$.
 
The purpose of this work is to determine the limiting law of the \emph{chordal zero-height contours} of $h^n$.  To keep the article from being unnecessarily complicated, we will select our boundary conditions in such a way that there is only a single such curve.  That is, we fix $x,y \in \partial D$ distinct and let $x_n, y_n$ be points in $\partial D_n$ with minimal distance to $x,y$, respectively.  Denote by $\partial_+^n$ the part of $\partial D_n$ connecting $x_n$ to $y_n$ in the clockwise direction and $\partial_-^n = \partial D_n \setminus \partial_+^n$.  Let $x_n^*,y_n^*$ be the edges containing $x_n,y_n$, respectively, which connect $\partial_+^n$ to $\partial_-^n$.  Suppose that $h^n$ has the law of the GL model on $D_n$ with boundary conditions $h^n|_{\partial D_n} \equiv \phi^n$ where $\phi^n|_{\partial_+^n} \in (0,\infty)$ and $\phi^n|_{\partial_-^n} \in (-\infty,0)$.  Let $\gamma^n$ be the unique path in $D_n^*$ connecting $x_n^*$ to $y_n^*$ which has the property that for each $t$, $\gamma^n(t)$ is the first edge $\{u,v\}$ on the square adjacent to $\gamma^n(t-1)$ in the clockwise direction such that $h^n(u) > 0$ and $h^n(v) < 0$ if $\gamma^n(t-1)$ is oriented horizontally and in the counterclockwise direction if $\gamma^n(t-1)$ is oriented vertically (see Figure \ref{fig::turning_rule}).

\begin{comment}
 \begin{figure}
     \centering
     \subfigure{
        \includegraphics[width=.55\textwidth]{}}
         \hspace{0.05in}
          \subfigure{
        \includegraphics[width=.4\textwidth]{}}
         \caption{Will insert a real picture of zero interface}
  \end{figure}
\end{comment}
 
 \begin{theorem}
 \label{intro::thm::sle_convergence}
There exists $\lambda \in (0,\infty)$ depending only on $\CV$ such that the following is true.  If $\phi^n|_{\partial_+^n} = \lambda$ and $\phi^n|_{\partial_-^n} = -\lambda$, then up to reparameterization, the piecewise linear interpolation of $\gamma^n$ converges in distribution with respect to the uniform topology to an $\SLE(4)$ curve connecting $x$ to $y$ in $D$.
 \end{theorem}

   \begin{figure}
     \centering
         \includegraphics[width=1\textwidth]{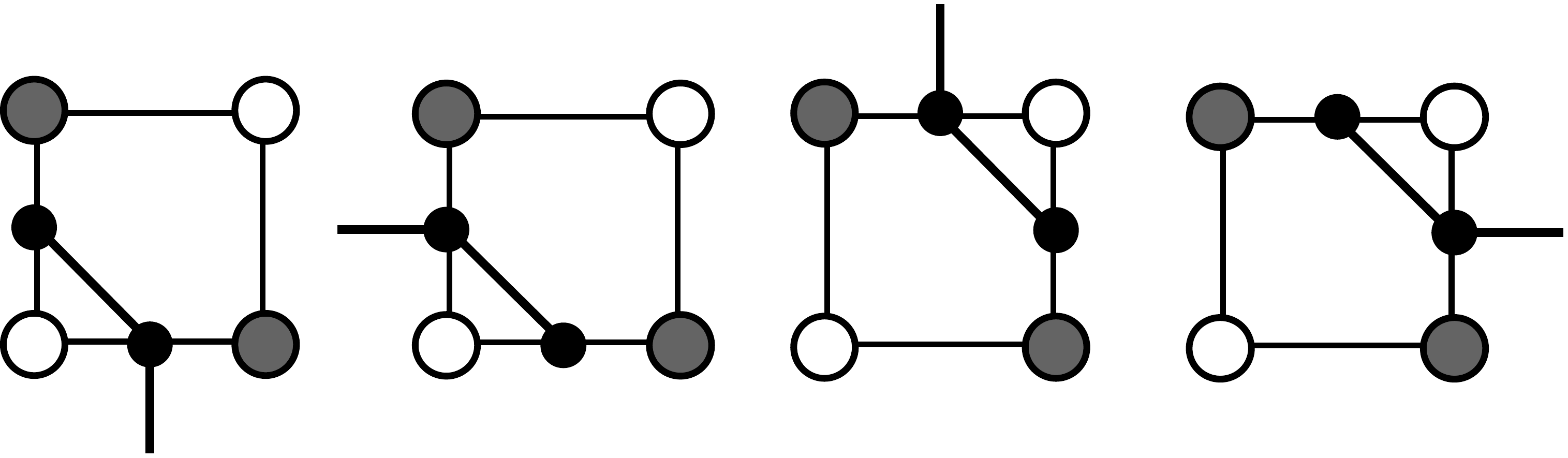}
          \caption{We must a fix a convention which dictates the direction in which $\gamma^n$ turns, as ambiguities may arise.  The white (resp. gray) disks at the boundary of a square indicate sites at which the field is positive (resp. negative).  In each of the situations depicted above, there are two possible directions in which $\gamma^n$ can turn and still preserve the constraint that the field is positive (resp. negative) on the left (resp. right) side of $\gamma^n$.  Our convention is that on horizontal (resp. vertical) dual edges, $\gamma^n$ goes to the first dual edge in the clockwise direction (resp. counterclockwise) where there is a sign change.  \label{fig::turning_rule}}
 \end{figure}

This is a resolution of the following conjecture due to Sheffield for the GL model, which constitutes a large and important special case:\bigskip

\noindent{\bf Problem 10.1.3 \cite{SHE_RS06}:}
\emph{``If a height function $\phi$ on $\Z^2$ is interpolated to a function $\ol{\phi}$ which is continuous and piecewise linear on simplices, then the level sets $C_a$, given by $\ol{\phi}^{-1}(a)$, for $a \in \R$ are unions of disjoint cycles.  What do the typical ``large'' cycles look like when $\Phi$ is simply attractive and sampled from a rough gradient phase?  The answer is given in \cite{SS09} in the simplest case of quadratic nearest neighbor potentials - in this case, ``the scaling limit'' of the loops as the mesh size gets finer is well defined, and the limiting loops look locally like a variant of the Schramm-Loewner evolution with parameter $\kappa = 4$.  We conjecture that this limit is universal - i.e., that the level sets have the same limiting law for all simply attractive potentials in a rough phase.''}\bigskip

In Sheffield's terminology, $\Phi$ is said to be a \emph{simply attractive potential}, i.e. a convex, nearest-neighbor, difference potential, if $\Phi$ takes the form $\sum_{b \in D_n^*} \CV_b(\nabla h(b))$ where for each $b \in D_n^*$, $\CV_b$ is a convex function.  $\Phi$ is said to be isotropic if $\CV_b = \CV$, i.e. does not depend on $b$.  Thus the Hamiltonian for the GL model is an \emph{isotropic simply attractive potential} which is \emph{uniformly convex}.

The GL has been the subject of much recent work.  Gibbs states were classified by Funaki and Spohn in \cite{FS97}, where they also study macroscopic dynamics.  A large deviations principle for the surface shape with zero boundary conditions but in the presence of a chemical potential was established by Deuschel, Giacomin, and Ioffe in \cite{DGI00} and Funaki and Sakagawa in \cite{FS04} extend this result to the case of non-zero boundary conditions using the contraction principle.  The behavior of the maximum is studied by Deuschel and Giacomin in \cite{DG00} and by Deuschel and Nishikawa in \cite{DN07} in the case of Langevin dynamics.  Central limit theorems for Gibbs states were proved by Naddaf and Spencer in \cite{NS97} for zero tilt and later by Giacomin, Olla, and Spohn for general tilt and dynamics in \cite{GOS01}.  The CLT on finite domains as well as an explicit representation for the limiting covariance was obtained in \cite{M10}.

We remark that it is possible to weaken significantly the restrictions on the boundary conditions.  As shown in a forthcoming work, the limit is $\SLE(4;\rho)$ in the piecewise constant case and a ``continuum version'' of $\SLE(4;\rho)$ for $C^1$ boundary conditions.  We also remark that the reason for the convention dictating the direction in which $\gamma^n$ turns in Theorem \ref{intro::thm::sle_convergence} is that with this choice the law of $\gamma^n$ is invariant with respect to the transformation given by exchanging the signs of the boundary conditions.  Moreover, this scheme yields a path which is equivalent to that which arises from the triangulation method described in \cite[Section 1.5]{SS09}, in particular by adding to $\Z^2$ the edges of the form $\{(x,y),(x+1,y-1)\}$.  There are a number of other natural local rules, for example for the curve to move to the first edge in the clockwise direction where the height field has a sign change.  This to a law leads law which is not invariant with respect to this transformation and the limit is no longer $\SLE(4)$, but rather some $\SLE(4;\rho)$.

\subsection{Overview of $\SLE$}

The Schramm-Loewner evolution ($\SLE$) is a one-parameter family of conformally invariant random curves, introduced by Oded Schramm in \cite{S0} as a candidate for, and later proved to be, the scaling limit of loop erased random walk \cite{LSW04} and the interfaces in critical percolation \cite{S01, CN06}.  $\SLE$ comes in two different flavors: radial and chordal.  The former describes a curve connecting a point on the boundary of a domain to its interior and the latter a curve connecting two points on the boundary.  We will restrict our discussion to the latter case since it is the one relevant for this article.  We remark that there are many excellent surveys on $\SLE$, for example \cite{LAW05, W03}, to which we direct the reader interested in a detailed introduction to the subject.

Chordal $\SLE(\kappa)$ on the upper half-plane $\h=\{ z \in \C : \im(z) > 0\}$ connecting $0$ to $\infty$ is easiest to describe first in terms of a family of \emph{random conformal maps} which are given as the solution to the Loewner ODE
\[ \partial_t g_t(z) = \frac{2}{g_t(z) - W(t)},\ \ g_0(z) = z.\]
Here, $W = \sqrt{\kappa} B$ where $B$ is a standard Brownian motion.  The domain of $g_t$ is $\h_t = \{z \in \h : \tau(z) > t\}$ where $\tau(z) = \inf\{ t \geq 0 : \im(g_t(z)) = 0\}$.  By work of Rohde and Schramm \cite{RS05}, $\h_t$ arises as the unbounded connected component of a random curve $\gamma$ on $[0,t]$, the $\SLE$ \emph{trace}.  This is what allows us to refer to $\SLE$ as a curve.  $\SLE(\kappa)$ connecting boundary points $x$ and $y$ of a simply connected Jordan domain is defined by applying a conformal transformation $\varphi \colon \h \to D$ to $\SLE(\kappa)$ on $\h$ sending $0$ to $x$ and $\infty$ to $y$.  Of course, this leaves one degree of freedom in the choice of $\varphi$, so this only defines $\SLE(\kappa)$ on $D$ up to reparameterization.

The following two properties characterize chordal $\SLE(\kappa)$:
\begin{enumerate}
  \item {\it conformal invariance}: If $D,D'$ are simply connected Jordan domains with marked boundary points $x,y \in \partial D$ and $x',y' \in \partial D'$ and $\varphi \colon D \to D'$ is a conformal map taking $x,y$ to $x',y'$, respectively, then the image of a chordal $\SLE(\kappa)$ connecting $x$ to $y$ in $D$ under $\varphi$ is a chordal $\SLE(\kappa)$ connecting $x'$ to $y'$ in $D'$
  \item {\it domain Markov property}: if $\gamma$ is the trace of a chordal $\SLE(\kappa)$ from $x$ to $y$ in $D$, then conditional on $\gamma[0,s]$, $\gamma$ has the law of a chordal $\SLE(\kappa)$ from $\gamma(s)$ to $y$ in the connected component of $D \setminus \gamma[0,s]$ containing $y$.
\end{enumerate}

Many families of random curves arising from interfaces of two-dimensional lattice models are believed to satisfy these two properties in the scaling limit, hence converge to some $\SLE(\kappa)$.  While there are many conjectures, proving such convergence is extremely challenging and, as we mentioned earlier, rigorous proofs are available only in a few isolated cases.  Establishing a strong form of universality has proved to be particularly difficult since the arguments in these works are rather delicate and depend critically on the microscopic specification of the model.  For example, while the interfaces of percolation on the \emph{triangular lattice} have been shown to converge to $\SLE(6)$ \cite{CN06, S01} the combinatorial argument of \cite{S01} is not applicable for any other lattice.  Even the seemingly simple extension of the results of \cite{S01} to percolation on the square lattice has been open for much of the past decade. 

\subsection{Strategy of Proof}

The strategy for proving convergence to $\SLE$ involves several steps, the most difficult and important of which is to find an observable of the underlying model and prove it has a conformally invariant limit which is also a martingale.  We now describe the observable used in this article.  Suppose that $\gamma$ is the trace of an $\SLE(4)$ curve in $\h$ from $0$ to $\infty$ and $(g_t)$ is the corresponding family of conformal maps.  Let $f_t \colon \h_t \to \R$ be the function harmonic on $\h_t$ with boundary values $0$ on the right side of $\gamma$ and $(0,\infty)$ and $1$ on the left side of $\gamma$ and $(-\infty,0)$.  We can express $f_t$ explicitly in terms of $g_t$ as follows:
\[ f_t(z) = \frac{1}{\pi} \im(\log(g_t(z))).\]
A calculation of the Ito derivative of the right side shows that $f_t(z)$ evolves as a martingale in time for $z$ fixed precisely because $\kappa = 4$.  This property characterizes $\SLE(4)$ among random simple curves \cite{SS05, SS09}. 

The key of the proof of Theorem \ref{intro::thm::sle_convergence} is to show that this approximately holds for the corresponding interface of the GL model, provided $\lambda > 0$ is chosen appropriately.  Specifically, suppose that $D, D_n, x_n, y_n, \gamma^n$ are as in the statement of Theorem \ref{intro::thm::sle_convergence} and that $\CF_t^n = \sigma( \gamma_s^n : s \leq t)$.  Let $D_n(\gamma,t,\epsilon) = \{ x \in D_n : \dist(x, \partial D_n \cup \gamma^n([0,t])) \geq \epsilon\}$.

\begin{theorem}
\label{intro::thm::approximate_martingale}
There exists $\lambda \in (0,\infty)$ depending only on $\CV$ such that the following is true.  Let $f_t^n$ be the function on $D_n \setminus \gamma^n([0,t])$ which is discrete harmonic in the interior and has boundary values $\lambda$ on $\partial_+^n$ and the left side of $\gamma^n([0,t])$ and $-\lambda$ on $\partial_-^n$ and the right side of $\gamma^n([0,t])$.  Also let $M_t^n(x) = \E[ h^n(x) | \CF_t^n]$ and $\CE_t^n(\epsilon) = \max\{|f_t^n(x) - M_t^n(x)| : x \in D_n(\gamma,t,\epsilon)\}$.  For every $\epsilon, \delta > 0$ there exists $n$ sufficiently large such that for every $\CF_t^n$ stopping time $\tau$ we have
\[ \p[ \CE_\tau^n(\epsilon) \geq \delta] \leq \delta.\] 
\end{theorem}
\noindent This, in particular, implies that $f_t^n(x)$ is an approximate martingale.

\subsubsection*{Main Steps}

Theorem \ref{intro::thm::approximate_martingale} should be thought of as a law of large numbers for the conditional mean of the height given the realization of the path up to any stopping time.  Its proof consists of several important steps.  First, Theorem \ref{harm::thm::mean_harmonic} of \cite{M10} implies $M_\tau^n(x)$ is with high probability uniformly close to the discrete harmonic extension of its boundary values from $\partial D_n(\gamma,t,n^{-\epsilon})$ to $D_n(\gamma,t,n^{-\epsilon})$ provided $\epsilon = \epsilon(\CV) > 0$ is sufficiently small.  In particular, $M_\tau^n(x)$ is approximately discrete harmonic mesoscopically close to $\gamma^n[0,\tau]$ relative to the Euclidean metric in $\R^2$.  With respect to the graph metric, in which the distance is given by the number of edges in the shortest path, the distance at which this \emph{a priori} estimate holds from the path is unbounded in $n$.  Using the results of Sections \ref{sec::expectation}, \ref{sec::hic} we will prove that this estimate can be boosted further to get the approximate harmonicity of $M_t^n(x)$ up to finite distances from $\gamma^n[0,\tau]$ in the graph metric:

\begin{theorem}
\label{intro::thm::harmonic_up_to_boundary}
Fix $\ol{\Lambda} > 0$ and suppose that $h^n$ has the law of the GL model on $D_n$ with boundary conditions $\phi$ satisfying $\phi|_{\partial_+^n} \in (0,\ol{\Lambda})$ and $\phi|_{\partial_-^n} \in (-\ol{\Lambda},0)$.  For every $\delta > 0$ there exists $r_0 = r_0(\ol{\Lambda},\delta) > 0$ such that the following is true.  Let $r > r_0$ and $\psi_t^n$ be the function on $D_n(\gamma,t,r n^{-1})$ which satisfies the boundary value problem
\[ (\Delta \psi_t^n)|_{D_n(\gamma,t,rn^{-1})} \equiv 0,\ \ \psi_t^n|_{\partial D_n(\gamma,t,rn^{-1})} \equiv M_t^n\]
where $\Delta$ denotes the discrete Laplacian.
Let $\CD_t^n(r) = \max\{|\psi_t^n(x) - M_t^n(x)| : x \in D_n(\gamma,t,r n^{-1})\}$.  For every $\CF_t^n$ stopping time $\tau$, we have $\E[ \CD_\tau^n(r)] \leq  \delta.$
\end{theorem}

This reduces the proof of Theorem \ref{intro::thm::approximate_martingale} to showing that the boundary values of $M_t^n$ very close to $\gamma^n[0,t]$ averaged according to harmonic measure are approximately constant.  Specifically, the latter task requires two estimates:
\begin{enumerate}
\item Correlation decay of the boundary values of $M_t^n$ at points which are far away from each other.
\item The law of $M_t^n$ at a point on $\gamma^n$ sampled from harmonic measure has a scaling limit as $n \to \infty$.
\end{enumerate}
Step (1) is a consequence of Proposition \ref{hic::prop::hic}, which we will not restate here, and step (2) comes from the following theorem:

\begin{figure}
\includegraphics[width=90mm]{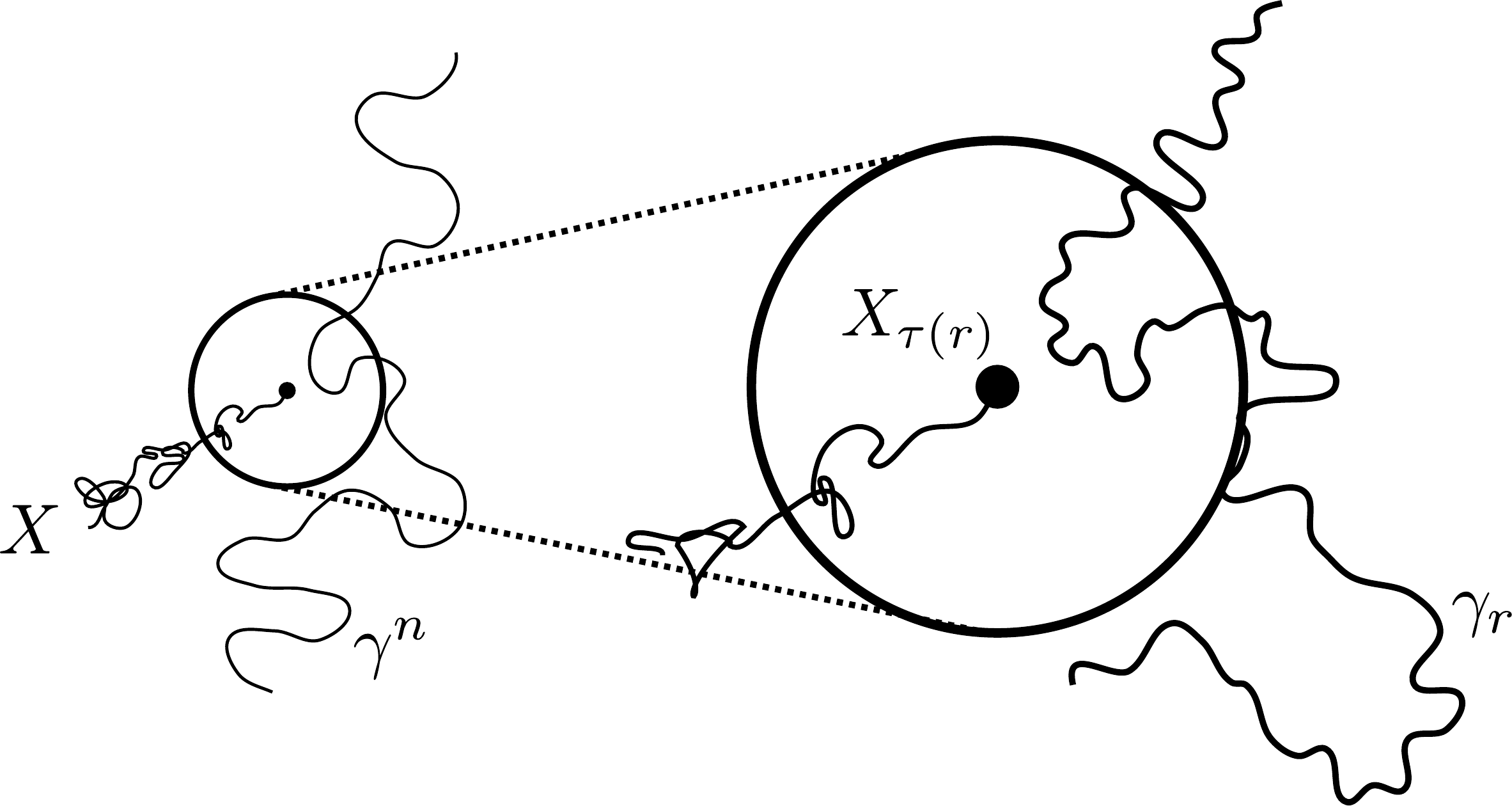}
\caption{We show that for each $r \geq 0$, the interface $\gamma^n$ has a scaling limit when viewed from the perspective of $X_{\tau(r)}$, where $X$ is a simple random walk and $\tau(r)$ is the first time $X$ comes within distance $rn^{-1}$ of $\gamma^n$.  This expands on the Schramm-Sheffield approach, where it is only necessary to construct the scaling limit for $r = 0$.}
\end{figure}

\begin{theorem}
\label{intro::thm::scaling_limit}
For each $r \geq 0$ there exists a unique measure $\nu_r$ on bi-infinite simple paths in $(\Z^2)^*$ which come exactly within distance $r$ of $0$ such that the following is true.  Suppose that $\tau$ is an $\CF_t^n$ stopping time, $X$ is a simple random walk on $\tfrac{1}{n} \Z^2$ initialized in $D_n(\gamma,\tau,\epsilon)$ independent of $h^n$, and $\tau(r)$ is the first time that $X$ gets within distance $rn^{-1}$ of $\partial (D_n \setminus \gamma^n[0,\tau])$.  Let $\gamma_+^n$ denote the positive side of $\gamma_n$ and $\dist(\cdot,A)$ denote the distance in the internal metric of $D_n \setminus \gamma^n[0,\tau]$ to $A$.  Conditional on both
\begin{enumerate}
 \item $\dist(X_{\tau(r)}, \gamma_+^n) = r n^{-1}$ and
 \item $\dist(X_{\tau(r)},\partial D_n \cup \{\gamma^n(\tau)\}) \geq S n^{-1}$,
\end{enumerate}
let $\nu_{n,r,R,S} $ be the probability on simple paths in $(\Z^2)^*$ induced by the law of $B(0,R) \cap n(\gamma^n[0,\tau] - X_{\tau(r)})$.  For every $\delta, r,R> 0$, there exists $S_0$ such that $S \geq S_0$ implies
\[ \| \nu_{n,r,R,S} - \nu_r|_{B(0,R)} \|_{TV} \leq \delta\]
for all $n$ large enough, where $\nu_r|_{B(0,R)}$ denotes the law of $\gamma \cap B(0,R)$ for $\gamma \sim \nu_r$.
\end{theorem}

Theorem \ref{intro::thm::scaling_limit} is a mesoscopic version of \cite[Theorem 3.21]{SS09} applicable for the GL model.  Its proof is based on the idea that the geometry of $\gamma^n$ is spatially mixing and has two main steps which, roughly, are:

\begin{enumerate}
\item The geometry of zero height interfaces of $h^n$ near a point $x_0$ is approximately independent of the geometry of $\gamma^n$ away from $x_0$ (see Section \ref{sec::ni}),
\item With high probability, $\gamma^n$ will hook-up with a large zero-height interface passing through a point $x_0$ conditional upon $\gamma^n$ passing near $x_0$.
\end{enumerate}

Step (1) is model specific and requires a challenging argument in the general GL setting.  On the other hand, we are able to reuse many of the high level ideas behind step (2) from \cite{SS09} in our setting thanks to their generality.

We now explain how to prove Theorem \ref{intro::thm::approximate_martingale} from Theorems \ref{intro::thm::harmonic_up_to_boundary} and \ref{intro::thm::scaling_limit}.  Let $\lambda_r \in (0,\infty)$ be the constant given by the following procedure.
\begin{enumerate}
  \item Sample $\gamma_r \sim \nu_r$, let $V_+(\gamma_r)$ be the sites adjacent to $\gamma_r$ which are in the same connected component of $\Z^2 \setminus \gamma_r$ as $0$, and $V_-(\gamma_r)$ the set of all other sites adjacent to $\gamma_r$.
  \item Conditional on $\gamma_r$, we let $h_r$ have the law of the GL model on $\Z^2$ conditional on $\{ h_r(x) > 0 : x \in V_+(\gamma_r)\}$ and $\{ h_r(x) < 0 : x \in V_-(\gamma_r)\}$. 
   \item  Set $\lambda_r = \E[h_r(0)]$.
\end{enumerate}
For each $\delta > 0$, we can choose $r$ sufficiently large so that $M_\tau^n(x)$ is with high probability uniformly close to the harmonic function in $D_n(\gamma,\tau,r n^{-1})$ with boundary values $\lambda_r$ (resp. $-\lambda_r$) on the left (resp. right) side of $\gamma^n$.  Theorem \ref{intro::thm::approximate_martingale} follows by showing $\lambda = \lim_{r \to \infty} \lambda_r$ exists and $\lambda \in (0,\infty)$.

Deducing convergence to $\SLE$ in the Caratheodory topology from an estimate such as Theorem \ref{intro::thm::approximate_martingale} follows a procedure which by now is standard, see \cite{LSW04, SS05, SS09}.  The model specific arguments of \cite{SS09} used to promote the convergence to the uniform topology work verbatim in our setting, however, are unnecessary thanks to the time symmetry of our problem and recent results of Sheffield and Sun \cite{SS10}.

We remark that the existence and positivity of the limit of $(\lambda_r)$ is one of the crucial points of the proof.  Indeed, it follows from the work of Kenyon \cite{K01, K00} that the mean of the height function of the double dimer model also converges to the harmonic extension of its boundary values.  Using the same method of proof employed here to deduce the convergence of the chordal interfaces of the double dimer model seems to break down \cite{SP10}, though.  The technical difficulty in that setting is the estimate of harmonicity of the mean from \cite{K00} requires the boundary to satisfy certain geometric conditions which need not hold for the zero-height interfaces.  Thus, just as in our case, one does not have an estimate of harmonicity of the mean which holds all of the way up to the interfaces.  In particular, it appears to be very difficult to show that the mean height remains uniformly positive (resp. negative) on the positive (resp. negative) sides of the interfaces.

\begin{comment}
We emphasize that if we had chosen $\gamma^n$ according to a different local rule so that its law is not invariant under a change of sign of the boundary conditions, then the law of the local geometry of $\gamma^n$ as seen from $X_{\tau(r)}$ would depend on whether or not $X_{\tau(r)}$ hits on the positive or negative side.  In this case, there exists $\lambda_r^+, \lambda_r^- > 0$ distinct such that $M_\tau^n(x)$ is close to the harmonic function with boundary values $\pm \lambda_r^{\pm}$ on the positive and negative sides of $\gamma^n$.  Both sequences $(\lambda_r^\pm)$ have limits $\lambda^{\pm}$ and if we let $h^n|_{\partial_n^\pm} = \lambda^\pm$, then the limit is still $\SLE(4)$.
\end{comment}

\subsection{Outline}  The rest of the article is structured as follows.  In Section \ref{sec::setup_notation}, we will fix some notation which will be used repeatedly throughout.  The purpose of Section \ref{sec::cond_dynam} is to develop the theory of dynamic coupling for the GL model under the presence of conditioning in addition to collecting some useful results on stochastic domination.  In Section \ref{sec::expectation} we will prove a few technical estimates which allow us to control the moments of the conditioned field near the interface.  The main result of Section \ref{sec::hic} is that the law of the field near a particular point $x_0$ on the interface does not depend strongly on the exact geometry of the interface far away from $x_0$.  This will allow us to deduce that the mean height is strictly negative near the negative side of the interface and vice versa on the positive side, which in turn implies $\lambda_r$ is uniformly positive in $r$.  We will also prove Theorem \ref{intro::thm::harmonic_up_to_boundary} and deduce from it that $\lambda_r$ is uniformly bounded from both $0$ and $\infty$ in $r$.  In Section \ref{sec::ni}, we will show that the geometry of the interface near a point $x_0$ is approximately independent from its precise geometry far away from $x_0$.  This is the key part of Theorem \ref{intro::thm::scaling_limit}.  We will explain the proofs of Theorems \ref{intro::thm::scaling_limit} and \ref{intro::thm::approximate_martingale} in Section \ref{sec::bvi}.

This article is the second in a series of two.  The first is a prerequisite for this one and we will cite it heavily throughout.

\section{Setup and Notation}
\label{sec::setup_notation}

Throughout the rest of this article, we will frequently make use of the following two assumptions:
\begin{enumerate}
 \item[($\partial$)] \label{assump::boundary}  Suppose that $D \subseteq \Z^2$ with $\diam(D) = R$.  Assume that $\ol{\Lambda} > 0$ and $\psi \in \B_{\ol{\Lambda}}(D) \equiv \{ \phi \colon \partial D \to \R : \| \phi \|_\infty \leq \ol{\Lambda}\}$.
 \item[($C$)] \label{assump::conditioning} Let $V,V_+, V_-$ be non-empty disjoint subsets of $D$ and let $U = \partial D \cup V_- \cup V_+ \cup V$.  Suppose that for every $x \in V_+$ there exists $y \in V_- \cup V \cup \partial D$ with $|x-y| \leq 2$ and vice-versa.  Finally, assume that $a,b \colon D \to \R$ satisfy
\begin{enumerate}
 \item[(1)] $a(x) = -\infty$, $b(x) = \infty$ for $x \notin U$,
 \item[(2)] $a(x) \geq - \ol{\Lambda}$, $b(x) = \infty$ for $x \in V_+$,
 \item[(3)] $a(x) = -\infty$, $b(x)  \leq \ol{\Lambda}$ for $x \in V_-$, and
 \item[(4)] $a(x) \geq - \ol{\Lambda}$, $b(x) \leq \ol{\Lambda}$ for $x \in V$.
\end{enumerate}
\end{enumerate}

We will also occasionally make the assumption
\begin{enumerate}
\item[($\pm$)] The conditions of $(C)$ hold in the special case that $V = \emptyset$, $a \equiv 0$ in $V_+$ and $b \equiv 0$ in $V_-$.
\end{enumerate}

We will also make use of the following notation.  For $D \subseteq \Z^2$ bounded and $\psi \colon \partial D \to \R$ a given boundary condition, we let $\p_D^\psi$ denote the law of the GL model on $D$ with boundary condition $\psi$.  Explicitly, this is the measure on functions $h \colon D \to \R$ with density
\begin{equation}
\label{gl::eqn::density}
  \frac{1}{\CZ} \exp\left( - \sum_{b \in D^*} \CV( \nabla (h \vee \psi) (b)) \right)
\end{equation}
with respect to Lebesgue measure on $\R^{|D|}$, where
\[ h \vee \psi(x) = \begin{cases} h(x) \text{ if } x \in D,\\ \psi(x) \text{ if } x \in \partial D.\end{cases}\]
If $g \colon D \to \R$, then $\Q_D^{\psi,g}$ is the law of $(h-g)$ where $h$ is distributed according to $\p_D^\psi$.  The expectations under $\p_D^\psi$ and $\Q_D^{\psi,g}$ will be $\E^\psi$ and $\E_\Q^{\psi,g}$, respectively, and we will add an extra subscript if we wish to emphasize the domain.  We will omit the superscript $\psi$ if the boundary conditions are clear from the context.  Often we will be taking expectations over complicated couplings of multiple instances of the GL model, in which case we will typically just write $\E$ since the explicit construction of the coupling will be clear from the context.  

We will use $h$ to refer to a generic instance of the GL model and $h_t$ its Langevin dynamics, where the domain and boundary conditions will be clear from the context.  If we wish to emphasize the boundary condition, we will write $h^\psi$ and $h_t^\psi$ for $h,h_t$, respectively, and to emphasize $D$ we will write $h^D$ and $h_t^D$.  Finally, if we wish to emphasize both then we will write $h^{\psi,D}$ and $h_t^{\psi,D}$.  We will often condition on events of the form $\CK = \cap_{x \in D} \{ a(x) \leq h(x) \leq b(x)\}$ where $a,b$ arise as in $(C)$.  Notationally such conditioning will be expressed in two different ways.  The first possibility is that we will indicate in advance that an instance of the GL model $h$ will always be conditioned on $\CK$ and then make no further indication of it, in which case $h_t$ refers to the conditioned dynamics.  If either we need to emphasize the conditioning or $h$ refers to an unconditioned model, we will write $h|\CK$ for the conditioned model, $(h|\CK)_t$ for its dynamics, $h^\psi|\CK$ to emphasize the boundary condition, and $h^D|\CK$ to emphasize the domain.

The proofs in this article will involve many complicated estimates involving numerous constants.  In order to keep the arguments succinct, we will make rather frequent usage of $O$-notation.  Specifically, we say that $f = O(g)$ if there exists constants $c_1,c_2 > 0$ such that $|f(x)| \leq c_1 + c|g(x)|$ for all $x$.  If we write $f = O_\alpha(g)$ for a parameter or possibly family of parameters $\alpha$, then $c_1, c_2$ depend only on $\alpha$.  Finally, if $X$ and $Y$ are random variables, then $X = O(Y)$ means that $|X| \leq c_1 + c_2|Y|$ for \emph{non-random} $c_1, c_2$.
\section{Conditioned Dynamics}
\label{sec::cond_dynam}

Suppose that $D \subseteq \Z^2$ with $\diam(D) < \infty$, $\psi \colon \partial D \to \R$, and $a,b \colon D \to [-\infty,\infty]$ satisfy $a\leq b$.  The Langevin dynamics associated with $h \sim \p_D^\psi[ \cdot | \CK]$ where $\CK = \cap_{x \in D} \{ a(x) \leq h(x) \leq b(x)\}$ are described by the SDS
\begin{align}
\label{gl::eqn::cond_dynam}
 dh_t(x) = \sum_{b \ni x} \CV'(\nabla (h_t \vee \psi)(b))dt& + d[\ell_t^a - \ell_t^b](x) + \sqrt{2}dW_t(x),\\
  &x \in D, t \in \R \notag.
\end{align}
Here, $W$ is a family of independent, standard, two-sided Brownian motions and the processes $\ell^a,\ell^b$ are of bounded variation, non-decreasing, and non-zero only when $h_t(x) = a(x)$ or $h_t(x) = b(x)$, respectively.  If $a(x) = -\infty$, then $\ell^a(x) \equiv 0$ and if $b(x) = \infty$, then $\ell^b(x) \equiv 0$.  In particular, if $a \equiv -\infty$ and $b \equiv \infty$, then we just recover the Langevin dynamics of $\p_D^\psi$; see \eqref{gl::eqn::dynam} of \cite{M10}.

\subsection{Brascamp-Lieb and FKG inequalities}

In subsection \ref{subsec::hs_representation} of \cite{M10} we collected a few of the basic properties of the HS representation for the GL model without conditioning.  The HS representation is actually applicable in much more generality.  We will summarize that which is developed in Remark 2.3 of \cite{DGI00} relevant for our purposes.  Suppose that $\CU_x$ is a family of $C^2$ functions indexed by $x \in D$ satisfying $0 \leq \CU_x'' \leq \alpha.$  The law of the GL model with potential $\CV$ and self-potentials $\CU_x$ is given by the density
\begin{align*}
   \frac{1}{\CZ_{\CV,\CU}} \exp\left( -\sum_{b \in D^*} \CV( \nabla (h \vee \psi)(b)) - \sum_{x \in D} \CU_x(h(x)) \right)
\end{align*}
with respect to Lebesgue measure.  The associated Langevin dynamics are described by the SDS:
\begin{align*}
   d h_t^{\CU}(x) = \left[ \sum_{b \ni x} \CV'(\nabla h_t^{\CU} \vee \psi(b)) + \CU_x'(h_t^{\CU}(x))\right]dt + \sqrt{2} dW_t(x). 
\end{align*}
Letting $X_t^\CU$ be the random walk with time-dependent jump rates $\CV''(\nabla h_t^\CU(b))$, the covariance is given by:
\begin{align}    
\label{gl::eqn::hs_self_potential}
     &\cov(h^\CU(x), h^\CU(y))\\
  =&  \E_x^\CU \left[ \int_0^\tau \exp\left( -\int_0^s \CU_{X_u^\CU}''(h_u^\CU(X_u^\CU)) du \right) \one_{\{ X_s^\CU = y\}}  ds \right], \notag
\end{align}
where the subscript $x$ indicates that $X_0^\CU = x$.

Recall that the DGFF $h^*$ on $D$ is the random field with density as in \eqref{gl::eqn::density} in the special case $\CV(x) = \tfrac{1}{2} x^2$.  From \eqref{gl::eqn::hs_self_potential}, we immediately obtain the following comparison inequality which bounds from above centered moments of linear functionals of the \emph{conditioned} GL model by the corresponding moments of the \emph{unconditioned} DGFF.  Specifically, for $\nu,\mu \in \R^{|D|}$, letting
\[ \langle \mu, \nu \rangle = \sum_{x \in D} \mu_x \nu_x,\] we have:
\begin{lemma}[Brascamp-Lieb inequalities]
\label{bl::lem::bl_inequalities}
 Suppose that $h^*$ is a zero-boundary DGFF on $D$ and $h \sim \p_D^\psi[\cdot|\CK]$. There exists $C > 0$ depending only on $a_\CV,A_\CV$ such that the following inequalities hold:
\begin{align}
 & \var(\langle \nu, h \rangle) \leq C  \var( \langle \nu, h^* \rangle ) \label{gl::eqn::bl_var}, \\
 & \E[\exp(\langle \nu, h \rangle - \E[\langle \nu,h \rangle])] \leq \E[\exp( C\langle \nu, h^* \rangle )] \label{gl::eqn::bl_exp}
\end{align}
for all $\nu \in \R^{|D|}$.
\end{lemma}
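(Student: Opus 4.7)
The plan is to derive both inequalities as instances of the classical Brascamp--Lieb inequality, using the strict convexity hypothesis $\CV'' \geq a$ to compare the Hessian of the GL Hamiltonian with that of a scaled version of the DGFF Hamiltonian.

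\textbf{Hessian comparison.} Writing $\CH_\CV^{h_\partial}(h) = \sum_{b \in E} \CV(\nabla (h \vee h_\partial)(b))$, I would differentiate twice in the free variables $(h(x))_{x \in V}$ to obtain, for any $u \colon V \cup \partial V \to \R$ vanishing on $\partial V$,
\[
\langle u, \, \nabla^2 \CH_\CV^{h_\partial}(h)\, u\rangle \;=\; \sum_{b \in E} \CV''(\nabla (h \vee h_\partial)(b)) (\nabla u(b))^2 \;\geq\; a \sum_{b \in E}(\nabla u(b))^2 \;=\; a \langle u,\, (-\Delta) u\rangle,
\]
where the last equality is summation by parts as in Section~\ref{subsec::dgff_construction}. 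Hence $\nabla^2 \CH_\CV^{h_\partial}(h) \geq a(-\Delta)$ as quadratic forms on $\{u : u|\partial V = 0\}$, uniformly in $h$ and in $h_\partial$.

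\textbf{Invocation of the classical inequality.} The Brascamp--Lieb inequality asserts that if $d\mu = e^{-W(x)}\,dx/Z$ on $\R^n$ satisfies $\nabla^2 W \geq K$ for a positive definite $K$, then for every convex $\phi\colon \R \to \R$ and every linear functional $\ell \colon \R^n \to \R$,
\[
\E_\mu\, \phi\bigl(\ell - \E_\mu \ell\bigr) \;\leq\; \E_{\CN(0, K^{-1})}\, \phi(\ell).
\]
I would apply this with $W = \CH_\CV^{h_\partial}$, $K = a(-\Delta)$, and $\ell(h) = \langle \nu, h\rangle$, yielding
\[
\E\, \phi\bigl(\langle \nu, h\rangle - \E \langle \nu, h\rangle\bigr) \;\leq\; \E_{Y \sim \CN(0, (a(-\Delta))^{-1})}\, \phi(\langle \nu, Y\rangle).
\]

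\textbf{Extracting the two bounds and main obstacle.} By Section~\ref{subsec::dgff_construction} the centered variable $\langle \nu, h^*\rangle - \E\langle \nu, h^*\rangle$ is Gaussian with variance $\langle \nu, (-\Delta)^{-1}\nu\rangle$, so $\langle \nu, Y\rangle$ has the same law as $a^{-1/2}(\langle \nu, h^*\rangle - \E\langle \nu, h^*\rangle)$. Taking $\phi(t) = t^2$ gives \eqref{gl::eqn::bl_var} with $C = a^{-1}$, and taking $\phi(t) = e^t$ gives \eqref{gl::eqn::bl_exp} with $C = a^{-1/2}$. The argument reduces to a standard functional inequality, so there is no essential analytic difficulty; the only real obstacle is bookkeeping of the boundary conditions, since $\CH_\CV^{h_\partial}$ depends on $h_\partial$ only through a shift in the argument of $\CV$ that leaves the Hessian in the free variables unchanged, provided one consistently restricts bilinear forms to functions vanishing on $\partial V$ so that summation by parts produces the discrete Dirichlet Laplacian rather than its free counterpart.
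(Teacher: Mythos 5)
Your argument is correct as a proof of the Brascamp--Lieb inequalities for the \emph{unconditioned} GL model, and in that setting it is a clean textbook route: the Hessian bound $\nabla^2 \CH \geq a(-\Delta)$ combined with the classical inequality for convex $\phi$ immediately gives both the variance and exponential bounds. However, the paper's lemma is used throughout in the setting where $h$ and $h^*$ are the GL model and DGFF \emph{conditioned} on the hard constraint $\{a(x) \leq h(x) \leq b(x) : x \in D\}$ (see the Langevin dynamics \eqref{gl::eqn::cond_dynam}), and the paper's proof is specifically built to cover this. Your Hessian argument does not apply directly to the conditioned measure, whose density is $e^{-\CH} 1_{\{a \leq h \leq b\}}/\CZ$: the indicator corresponds to adding a convex indicator function to $\CH$, which is not $C^2$, so the pointwise bound $\nabla^2 W \geq K$ breaks down. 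Moreover, even granting a smooth-approximation step, the classical inequality would compare the conditioned GL model to the \emph{unconditioned} Gaussian $\CN(0,(a(-\Delta))^{-1})$, whereas the lemma asserts a comparison to the \emph{conditioned} DGFF $h^*$, which is the sharper and more useful statement.

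The paper circumvents both issues by approximating the constraint with smooth convex self-potentials $\CU_x^n = n f_{a(x),b(x)}$, applying the Helffer--Sj\"ostrand covariance formula \eqref{gl::eqn::hs_self_potential} with the \emph{same} self-potential for both the GL model and the DGFF, and then passing to the limit $n\to\infty$. The HS representation writes both (co)variances as occupation-time integrals of random walks with jump rates in $[a,A]$ (respectively constant rate $1$) subject to the same killing; the bounded-rates hypothesis gives the comparison constant $C = C(a,A)$ directly at the walk level. This is a genuinely different mechanism from the Hessian/classical-BL route, and it is what allows a comparison between the two \emph{conditioned} fields rather than a detour through an unconditioned Gaussian. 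If you wish to keep the Hessian route, you would need at minimum (i) the same self-potential approximation to regularize the constraint and (ii) an additional argument to compare the conditioned DGFF to the unconditioned Gaussian in the opposite direction, and this second step is not available: the conditioned DGFF can have strictly smaller variance, so the chain of inequalities goes the wrong way.
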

\begin{proof}
For each $-\infty \leq \alpha < \beta \leq \infty$, fix a $C^\infty(\R)$ function $f_{\alpha,\beta}$ such that $f_{\alpha,\beta}|_{[\alpha,\beta]} \equiv 0$, $f_{\alpha,\beta}|_{[\alpha,\beta]^c} > 0$, and $0 \leq f_{\alpha,\beta}''(x) \leq 1$ for all $x \in \R$.  Let $\CU_x^n = n f_{a(x),b(x)}$.  If $h_n$ has the law of the GL model with self-potentials $\CU_x^n$ it follows from \eqref{gl::eqn::hs_self_potential} that
\[ \var(\langle \nu, h_n\rangle) \leq C \var( \langle \nu, h^* \rangle)\]
for some $C > 0$ depending only on $a_\CV,A_\CV$.
As $n \to \infty$, $h_n \stackrel{d}{\to} h$, which proves \eqref{gl::eqn::bl_var}.  One proves \eqref{gl::eqn::bl_exp} using a similar method; see also Corollary 2.7 from \cite{DGI00}.
\end{proof}

More generally, if $F, G \colon \R^{|D|} \to \R$ are smooth, then \eqref{gl::eqn::hs_self_potential} becomes
\begin{align}
     \label{gl::eqn::hs_self_potential_local_cov}
    &\cov(F(h),G(h)) = \\
   \E_x\bigg[ \partial F(X_0^\CU&, h_0^\CU) \int_0^\tau \exp\bigg( - \int_0^s \CU_{X_u^\CU}''(h_u^\CU(X_u^\CU)) du \bigg) \partial G(X_s^\CU, h_s^\CU) ds \bigg] \notag
 \end{align}
 where $\partial F(x,h) = \frac{\partial F}{\partial h(x)}(h)$.  This leads to a simple proof of the FKG inequality, which gives that monotonic functionals of the field are non-negatively correlated:
\begin{lemma}[FKG inequality]
\label{gl::lem::fkg}
 Suppose that $F,G \colon \R^{|D|} \to \R$ are smooth monotonic functionals, i.e. if $\varphi_1,\varphi_2 \in \R^{|D|}$ with $\varphi_1(x) \leq \varphi_2(x)$ for every $x \in D$ then $F(\varphi_1) \leq F(\varphi_2)$ and $G(\varphi_1) \leq G(\varphi_2)$.  For $h \sim \p_D^\psi[\cdot|\CK]$, we have
\[ \E[F(h) G(h)] \geq \E[F(h)] \E[G(h)].\]
\begin{proof}
This can be deduced from \eqref{gl::eqn::hs_self_potential_local_cov} using the same method as the previous lemma to deal with the conditioning; see also Remark 2.4 of \cite{DGI00}.
\end{proof}
\end{lemma}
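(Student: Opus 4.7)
The plan is to mirror the structure of the Brascamp--Lieb proof: reduce the conditioned statement to the unconditioned GL model with a smooth self-potential, and then obtain the FKG inequality at that level directly from the HS representation for covariances of functionals of the field. Throughout, I may assume $F,G$ are bounded, smooth, and monotonic; the general measurable case follows by mollification (which preserves monotonicity when done with a nonnegative kernel) and truncation together with Brascamp--Lieb moment control from Lemma~\ref{bl::lem::bl_inequalities}.

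The first step is to upgrade Lemma~\ref{gl::lem::hs_mean_cov} from linear functionals of the heights to smooth functionals $F,G$. Differentiating $\E[F(h_s)G(h_0)]$ in $s$ along the stationary Langevin dynamics \eqref{gl::eqn::dynam} and integrating the generator $\CL$ by parts using its self-adjointness in $L^2(e^{-\CH})$, one arrives at an identity of the form
\[
\cov(F(h^\psi),G(h^\psi))
= \sum_{x \in D} \E \int_0^\infty (\partial_x F)(h_0^\psi)\,(\partial_{X_s^\psi}G)(h_s^\psi)\,ds,
\]
where, conditional on the trajectory $(\nabla h_t^\psi)$, the process $X_s^\psi$ is the HS random walk of Section~\ref{subsec::hs_representation} with time-varying rates $\CV''(\nabla h_t^\psi(b))$ started at $x$. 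If $F$ and $G$ are monotonic in the coordinatewise sense assumed, then $\partial_x F \geq 0$ and $\partial_y G \geq 0$ pointwise, so every term in the sum is nonnegative and FKG follows for the unconditioned model. Exactly the same argument works when one adds smooth self-potentials $\CU_x$: the HS representation \eqref{gl::eqn::hs_self_potential} just inserts the positive exponential weight $\exp(-\int_0^s \CU_{X_t}''(h_t^\psi))$ into the integrand, which does not disturb the sign.

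To deal with the conditioning $\{a(x)\le h(x)\le b(x):x\in D\}$, follow the penalty construction used in the proof of Lemma~\ref{bl::lem::bl_inequalities}: take $\CU_x^n = n f_{a(x),b(x)}$ with $f_{a,b}\in C^\infty$ nonnegative, vanishing on $(a,b)$ and strictly positive outside, and let $h_n$ denote the GL model with these self-potentials. By the previous paragraph, $\E F(h_n)G(h_n)\ge \E F(h_n)\E G(h_n)$ for every bounded continuous monotonic $F,G$. As $n\to\infty$, $h_n$ converges weakly to the GL model conditioned on $\{a\le h\le b\}$, and the inequality passes to the limit. A final mollification/truncation step extends from smooth bounded monotonic $F,G$ to the general measurable monotonic functionals in the statement.

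The main obstacle is the derivation of the functional HS covariance identity in the first step. For linear functionals this is done in \cite{DGI00,GOS01}, but the extension to nonlinear $F,G$ requires checking that the time integrals converge and that the stochastic integration by parts is legitimate in the time-varying random environment; this is where one really needs Brascamp--Lieb a priori bounds to control the moments of $\partial_x F(h_s)$ uniformly in $s$. Once the identity is in hand, the monotonicity and the penalty limit are essentially automatic.
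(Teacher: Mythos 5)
Your proposal follows exactly the route the paper sketches: invoke the Helffer--Sj\"ostrand representation for covariances of (nonlinear) functionals from \cite{DGI00} to reduce FKG to the positivity of $\partial_x F$ and $\partial_y G$, and handle the conditioning $\{a\le h\le b\}$ by the same smooth self-potential penalty approximation $\CU_x^n = n f_{a(x),b(x)}$ used in the proof of Lemma~\ref{bl::lem::bl_inequalities}. The one thing you flag as a potential obstacle, namely establishing the functional covariance identity, is precisely what \cite{DGI00} supplies and what the paper cites; the only small slip is writing $\int_0^\infty$ instead of $\int_0^\tau$, which is harmless once the walk is absorbed on $\partial D$ and $G$ depends only on $h|_D$.
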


\subsection{Dynamic Coupling}

The method of dynamic coupling, introduced in \cite{FS97} and which played a critical role in \cite{M10}, also generalizes in the presence of conditioning.  Specifically, suppose that $h_t^{\psi}, h_t^{\wt{\psi}}$ both solve \eqref{gl::eqn::cond_dynam} with the same Brownian motions but possibly different boundary conditions $\psi, \wt{\psi}$.  Then $\ol{h}_t(x) = h_t^\psi(x) - h_t^{\wt{\psi}}(x)$ solves the SDE
\begin{equation}
\label{gl::eqn::cond_dynam_diff_long} d\ol{h}_t(x) = \sum_{b \ni x} [\CV'(\nabla h_t^{\psi}(b)) - \CV'(\nabla h_t^{\wt{\psi}}(b))]dt + d(\ol{\ell}_t^a - \ol{\ell}_t^b)(x)
\end{equation}
where $\ol{\ell}^a = \ell^{a,\psi} - \ell^{a,\wt{\psi}}$ and $\ol{\ell}^b = \ell^{b,\psi} - \ell^{b,\wt{\psi}}$.  Letting
\begin{equation}
\label{gl::eqn::c_l_def}
 c_t(b) = \int_0^1 \CV''( \nabla h_t^{\wt{\psi}}(b) + s \nabla \ol{h}_t(b)) ds \text{ and }
   \CL_t f(x) = \sum_{b \ni x} c_t(b) \nabla f(b),
\end{equation}
we can rewrite \eqref{gl::eqn::cond_dynam_diff_long} more concisely as
\begin{equation}
\label{gl::eqn::cond_dynam_diff}
d \ol{h}_t(x) = \CL_t \ol{h}_t(x) dt + d(\ol{\ell}_t^a - \ol{\ell}_t^b)(x).
\end{equation}
By a small computational miracle, the following energy inequality holds in the setting of conditioning:

\begin{lemma}[Energy Inequality]
\label{dynam::lem::ee}
Suppose that $(h_t^\psi,h_t^{\wt{\psi}})$ satisfy \eqref{gl::eqn::cond_dynam} with the same driving Brownian motions and $\ol{h} = h^\psi - h^{\wt{\psi}}$.  There exists $C > 0$ depending only on $\CV$ such that for every $T > S$ we have
\begin{align}
\label{gl::eqn::ee}
&\sum_{x \in D} |\ol{h}_T(x)|^2 + \int_S^T \sum_{b \in D^*} |\nabla \ol{h}_t(b)|^2 dt \notag\\
\leq& C \left(\sum_{x \in D} |\ol{h}_S(x)|^2 + \int_S^T \sum_{b \in \partial D^*} |\ol{\psi}(x_b)||\nabla \ol{h}_t(b)|dt\right).
\end{align}
\end{lemma}
\begin{proof}
This is a generalization of Lemma 2.3 of \cite{FS97}.
From \eqref{gl::eqn::cond_dynam_diff} we have
\[ d (\ol{h}_t(x))^2
   = 2 \ol{h}_t(x) \CL_t \ol{h}_t(x) dt + 2\ol{h}_t(x) d[\ol{\ell}_t^a - \ol{\ell}_t^b](x).
\]
We are now going to prove
\[  d (\ol{h}_t(x))^2
   \leq 2 \ol{h}_t(x) \CL_t \ol{h}_t(x) dt,
\]
from which the result follows by summing by parts and then integrating from $S$ to $T$.
Suppose $a(x) > -\infty$ and $t$ is such that $h_t^\psi(x) \geq h_t^{\wt{\psi}}(x) = a(x)$.  If $h_t^\psi(x) = h_t^{\wt{\psi}}(x)$, then obviously $\ol{h}_t(x) d \ol{\ell}_t^a(x) = 0$.  If $h_t^\psi(x) > h_t^{\wt{\psi}}(x)$ then $d\ell_t^{a,\psi}(x) = 0$ while $d \ell_t^{a,\wt{\psi}}(x) > 0$.  Thus $\ol{h}_t(x) > 0$ and $d \ol{\ell}_t^a(x) < 0$, so that $\ol{h}_t(x) d\ol{\ell}_t^a(x) < 0$.  Therefore $\ol{h}_t(x) d\ol{\ell}_t^a(x) \leq 0$.  We can play exactly the same game to prove $-\ol{h}_t(x) d \ol{\ell}_t^b(x) \leq 0$ if $b(x) < \infty$, which proves our claim.
\end{proof}

Suppose that $(h_\infty^\psi,h_\infty^{\wt{\psi}})$ is a subsequential limit of $(h_t^\psi,h_t^{\wt{\psi}})$ as $t \to \infty$.  By dividing both sides of \eqref{gl::eqn::ee} by $T$ and sending $T \to \infty$ we see that $\ol{h}_\infty$ satisfies
\begin{equation}
\label{gl::eqn::ee_limit}
\sum_{b \in D^*} \E |\nabla \ol{h}_\infty(b)|^2 \leq C\sum_{b \in \partial D^*} \E |\ol{\psi}(x_b)||\nabla \ol{h}_\infty(b)| 
\end{equation}

\begin{lemma}$\ $
\label{gl::lem::ergodic}
\begin{enumerate}
 \item The SDS \eqref{gl::eqn::cond_dynam} is ergodic.
 \item \label{gl::lem::ergodic::stationary} More generally, any finite collection $h^1,\ldots,h^n$ satisfying the SDS \eqref{gl::eqn::cond_dynam} each with the same conditioning and driven by the same family of Brownian motions is ergodic.
 \item If $(h^1,\ldots,h^n)$ is distributed according to the unique stationary distribution from part \eqref{gl::lem::ergodic::stationary}, then $\ol{h}^{ij} = h^i - h^j$ satisfies \eqref{gl::eqn::ee_limit}
\end{enumerate}
\end{lemma}
\begin{proof}
Lemma \ref{gl::lem::ergodic} of \cite{M10} contains the same statement but for the unconditioned dynamics.  The proof, however, relies only on the energy inequality hence is also valid here.
\end{proof}

\noindent We shall refer to the coupling $(h^1,\ldots,h^n)$ provided by part \eqref{gl::lem::ergodic::stationary} of the previous lemma as the \emph{stationary coupling} of the laws of $h^1,\ldots,h^n$.

We now need an analog of Lemma \ref{gl::lem::grad_error} from \cite{M10}.  In the setting of that article, this followed by combining the Caccioppoli inequality with the Brascamp-Lieb inequalities.  While we do have the latter even in the presence of conditioning, we do not have the former.  Luckily, we are able to deduce the same result using only the energy inequality and an iterative technique.  For $E \subseteq D$ we let $E(s) = \{ x \in E : \dist(x, \partial E) \geq s\}$.

\begin{lemma}
\label{gl::lem::grad_error}
Suppose $D \subseteq \Z^2$ with $R = \diam(D) < \infty$, let $\psi,\wt{\psi} \colon \partial D \to \R$, and let $E \subseteq D$ with $r = \diam(E)$.  Assume that $(h_t^\psi,h_t^{\wt{\psi}})$ is a stationary coupling of two solutions of the SDS \eqref{gl::eqn::dynam} with the same conditioning.  Let 
\[ M = \max_{x \in E} \big[ \E[(h^\psi)^2(x)] + \E[(h^{\wt{\psi}})^2(x)] \big].\]
For every $\epsilon > 0$ there exists a constant $k = k(\epsilon)$ and $kr^{1-\epsilon} \leq r_\epsilon \leq (k+1) r^{1-\epsilon}$ such that
\begin{equation}
\label{gl::eqn::dirichlet_bound}
\sum_{b \in E^*(r_\epsilon)} \E |\nabla \ol{h}(b)|^2 = O(r^{3\epsilon} M).
\end{equation}
\end{lemma}
\begin{proof}
Equation \eqref{gl::eqn::ee_limit} used in conjunction with the Cauchy-Schwarz inequality implies that for any subdomain $E_1 \subseteq D$,
\[ \sum_{b \in E_1^*} \E |\nabla \ol{h}(b)|^2 = O(|\partial E_1| M).\]
Since
\[ \sum_{s=1}^{r^{1-\epsilon}} |\partial E(s)| \leq |E| = O(r^2) \text{ and } 
   \sum_{s=1}^{r^{1-\epsilon}} \sum_{b \in \partial E^*(s)} \E | \nabla \ol{h}(b)|^2 = O(r^{2} M),\]
it follows there exists $0 \leq r_1 \leq r^{1-\epsilon}$ with $|\partial E(r_1)| = O(r^{1+\epsilon})$ such that
\[ \sum_{b \in \partial E^*(r_1)} \E |\nabla \ol{h}(b)|^2 = O(r^{1+\epsilon} M).\]
Inserting these bounds back into \eqref{gl::eqn::ee_limit} and applying the Cauchy-Schwarz inequality, we see that
\begin{align*}
 \sum_{b \in E^*(r_1)} \E| \nabla \ol{h}(b)|^2 \leq&
   C\left(|\partial E^*(r_1)| \max_{b \in \partial E^*(r_1)} \E |\ol{h}(x_b)|^2 \right)^{1/2}\left( \sum_{b \in \partial E^*(r_1)} \E |\nabla \ol{h}(b)|^2\right)^{1/2}\\
   =& O(\sqrt{ r^{1+\epsilon} M}) O(\sqrt{r^{1+\epsilon} M}) = 
 O(r^{1+\epsilon} M).
\end{align*}
By the same averaging argument, this in turn implies there exists $r^{1-\epsilon} \leq r_2 \leq 2 r^{1-\epsilon}$ such that
\[ \sum_{b \in \partial E^*(r_2)} \E |\nabla \ol{h}(b)|^2 = O(r^{2\epsilon} M)\]
and $|\partial E^*(r_2)| = O(r^{1+\epsilon})$.  Combining \eqref{gl::eqn::ee_limit} with the Cauchy-Schwarz inequality again yields
\begin{align*}
 \sum_{b \in E^*(r_2)} \E |\nabla \ol{h}(b)|^2 
&\leq C \left(|\partial E^*(r_2)| \max_{b \in \partial E^*(r_2)} \E |\ol{h}(x_b)|^2\right)^{1/2} \left(\sum_{b \in \partial E^*(r_2)} \E |\nabla \ol{h}(b)|^2\right)^{1/2}\\
&= O(\sqrt{r^{1+\epsilon} M}) O(\sqrt{r^{2 \epsilon} M}) = O(r^{1/2+3/2\epsilon} M).
\end{align*}
Iterating this $k$ times yields the existence of $(k-1) r^{1-\epsilon} \leq r_k \leq k r^{1-\epsilon}$ such that $|\partial E^*(r_k)| = O(r^{1+\epsilon})$ and
\begin{align*}
  \sum_{b \in E^*(r_k)} \E |\nabla \ol{h}(b)|^2 
&\leq C \left(|\partial E^*(r_k)| \max_{b \in \partial E^*(r_k)} \E |\ol{h}(x_b)|^2\right)^{1/2} \left(\sum_{b \in \partial E^*(r_k)} \E |\nabla \ol{h}(b)|^2\right)^{1/2}\\
&= O(r^{2^{-k} + \alpha_k \epsilon} M),
\end{align*}
where $\alpha_k = \sum_{j=0}^k 2^{-j} \leq 2$.  Taking $k$ large enough gives \eqref{gl::eqn::dirichlet_bound}.
\end{proof}

Lemma \ref{gl::lem::grad_error} will be used in conjunction with Lemma \ref{te::lem::expectation}, which provides bounds for $M$.

\subsection{The Random Walk Representation and Stochastic Domination}
\label{subsec::rw_difference}

The energy method of the previous subsection allowed us to deduce macroscopic regularity and ergodicity properties of the dynamic coupling.  In this subsection, we will develop the random-walk representation of $\ol{h}_t(x)$, which allows for pointwise estimates.

Fix $T > 0$ and let $X_t^T$ be the random walk in $D$ with time-dependent generator $t \mapsto \CL_{T-t}$ with $\CL_t$ as in \eqref{gl::eqn::c_l_def}.  Note that $c_t(b)$ makes sense for $t < 0$ hence $\CL_{T-t}$ for $t > T$ since \eqref{gl::eqn::cond_dynam} is defined for all $t \in \R$.  Let $\tau = \inf\{t \geq 0 : X_t^T \notin D\}$.

\begin{remark}
\label{gl::rem::stoc_rep}
In the special case $a(x) = -\infty$ and $b(x) = \infty$, so that the fields are unconditioned, the stationary coupling $(h_t^\psi, h_t^{\wt{\psi}})$ satisfies
\begin{equation}
\label{gl::eqn::rand_walk_diff}
 \ol{h}_T(x) = \E_x[\ol{h}_{T-\tau}(X_{\tau}^T)],
\end{equation}
where the expectation is taken only over the randomness of $X^T$.  Consequently, if $\ol{h}|\partial D \geq 0$ then $\ol{h}_T \geq 0$.  In other words, the stationary coupling $(h_t^\psi,h_t^{\wt{\psi}})$ satisfies $h_t^\psi \geq h_t^{\wt{\psi}}$ if the inequality is satisfied uniformly on the boundary.
\end{remark}

The purpose of the following lemma is to establish the same result in the presence of conditioning.

\begin{lemma}
\label{gl::lem::stoch_dom}
If $D \subseteq \Z^d$ is bounded and $\psi,\wt{\psi} \colon \partial D \to \R$ satisfy $\psi(x) \geq \wt{\psi}(x)$ for every $x \in \partial D$, then the stationary coupling $(h_t^\psi,h_t^{\wt{\psi}})$ of $\p_D^\psi[\cdot|\CK], \p_D^{\wt{\psi}}[\cdot|\CK]$ satisfies $h_t^\psi(x) \geq h_t^{\wt{\psi}}(x)$ for every $x \in D$.
\end{lemma}
\begin{proof}
The proof is similar to that of \cite[Lemma 2.4]{DN07}, which gives a stochastic domination result in a slightly different context.  For $\alpha \in \R$, we let $\alpha^- = \min(\alpha,0)$.  Note that
\begin{align*}
     d((\ol{h}_t)^-(x))^2
=& 2 (\ol{h}_t)^-(x) \CL_t \ol{h}_t(x) dt + 2(\ol{h}_t)^-(x) d[\ol{\ell}_t^a - \ol{\ell}_t^b](x)\\
 \leq& 2(\ol{h}_t)^-(x) \CL_t \ol{h}_t(x) dt.
\end{align*}
The last inequality used that $(\ol{h}_t)^-(x) d[\ol{\ell}_t^a - \ol{\ell}_t^b](x) \leq 0$, as in the proof of the energy inequality.  Thus,
\begin{align}
   &  d\left(\sum_{x \in D} ((\ol{h}_t)^-(x))^2 \right)
\leq 2\sum_{x \in D} (\ol{h}_t)^-(x) \CL_t \ol{h}_t(x) dt \notag\\
  =& -2\sum_{b \in D^*} c_t(b) \nabla (\ol{h}_t)^-(b) \nabla \ol{h}_t(b) dt, \label{gl::eqn::square_neg}
\end{align}
where in the last step we used summation by parts and that $(\ol{h}_t)^-|_{\partial D} \equiv 0$.  Now using $(\alpha^- - \beta^-)(\alpha-\beta) \geq (\alpha^- - \beta^-)^2$, we see that the previous expression is bounded from above by
\begin{equation}
\label{gl::eqn::square_neg2}
 -2 \sum_{b \in D^*} a_\CV [\nabla (\ol{h}_t)^-(b)]^2 dt.
\end{equation}
This implies that $S = \lim_{t \to \infty} \sum_{x \in D} ((\ol{h}_t)^-(x))^2$ exists and is constant.  By the Poincar\'e inequality,
\[ \liminf_{t \to \infty} \sum_{b \in D^*} [\nabla (\ol{h}_t)^-(b)]^2 dt \geq c_D S,\]
for $c_D > 0$ depending only on $D$.  Combining this with \eqref{gl::eqn::square_neg}, \eqref{gl::eqn::square_neg2} clearly implies $S = 0$.
\end{proof}

\begin{remark}
In the setting of Remark \ref{gl::rem::stoc_rep}, combining \eqref{gl::eqn::rand_walk_diff} with Jensen's inequality yields in the unconditioned case that
\[ \ol{h}_T^2(x) \leq \E_x [\ol{h}_{T-\tau}^2(X_{\tau}^T)],\]
where the expectation is just over the randomness in $X^T$.
\end{remark}
The same result also holds in the conditioned case, though we have to work ever so slightly harder to prove it.

\begin{lemma}
\label{gl::lem::square_bound}
Assume that we have the same setup as Lemma \ref{gl::lem::stoch_dom}.  We have,
\begin{equation}
\label{gl::eqn::rand_walk_diff_square}
\ol{h}_T^2(x) \leq \E_x[\ol{h}_{T-\tau}^2(X_{\tau}^T)].
\end{equation}
\end{lemma}
\begin{proof}
As in the proof of Lemma \ref{dynam::lem::ee}, 
\[ d \ol{h}_{T-t}^2(x) = 2 \ol{h}_{T-t}(x) d (\ol{h}_{T-t}(x)) \geq -2\ol{h}_{T-t}(x) \CL_{T-t} \ol{h}_{T-t}(x) dt.\]
Thus as
\[ \CL_{T-t} \ol{h}_{T-t}^2(x) = 2 \ol{h}_{T-t}(x) (\CL_{T-t} \ol{h}_{T-t})(x) + \sum_{b \ni x} c_{T-t}(b) (\nabla \ol{h}_{T-t}(b))^2\]
and, in particular,
\[ \CL_{T-t} \ol{h}_{T-t}^2(x) \geq 2 \ol{h}_{T-t}(x) (\CL_{T-t} \ol{h}_{T-t})(x)\]
we consequently have
\begin{equation}
\label{gl::eqn::h_square_bound}
 d \ol{h}_{T-t}^2(x)  + \CL_{T-t} \ol{h}_{T-t}^2(x) dt \geq 0.
\end{equation}
If $g \colon [0,T] \times \Z^2 \to \R$ is $C^1$ in its first variable with $\| \partial_s g\|_\infty < \infty$, then with
\[ M_t(g) = g(t,X_{t}^T) - g(0,x) - \int_0^t (\partial_s g)(s,X_{s}^T) + \CL_{T-s} g(s,X_{s}^T) ds,\ \ t \in [0,T],\]
we see that $M_{t \wedge \tau}(g)$ is a bounded martingale with respect to the filtration $(\CF_t)$, $\CF_t = \sigma( X_s^T : s \leq t)$.  Let $(\tau_k)$ be the jump times of $X^T$.  Then $M_t(g)$ can also be expressed as
\begin{align*}
    M_t(g) =& g(t,X_{t}^T) - g(0,x) - \sum_{k} [g(\tau_{k+1} \wedge t,X_{\tau_k \wedge t}^T) - g(\tau_k \wedge t,X_{\tau_k \wedge t}^T)]\\
       &- \int_0^t \CL_{T-s} g(s,X_{s}^T) ds.
\end{align*}
This representation allows us to make sense of $g \mapsto M_t(g)$ for $g$ which are not necessarily differentiable in time.  Letting $N(T) = \sup\{ k : \tau_k \leq T\}$, we have
\[ \|M_{\cdot \wedge \tau}(g)\|_\infty \leq C(T+1+N(T \wedge \tau))\| g \|_\infty\]
for some $C > 0$, where the supremum on the left hand side is taken over $[0,T]$ and on the right over $[0,T] \times D$.  Observe that $N(T \wedge \tau)$ has finite moments of all orders uniformly bounded in $T$ since the jump rates of $X^T$ are bounded from below and $D$ is bounded.  Consequently, taking a sequence $(g_n)$ with $g_n \colon [0,T] \times \Z^2 \to \R$ which is $C^1$ in the first variable such that $g_n(\cdot,x) \to \ol{h}_{T-\cdot}^2(x)$ uniformly in $t$, implies $M_{t \wedge \tau}(\ol{h}_{T-\cdot}^2)$ is also a $(\CF_t)$ martingale.  Note that
\[ M_t(\ol{h}_{T-\cdot}^2) = \ol{h}_{T-t}^2(X_t^T) - \ol{h}_T^2(x) - \int_0^t d \ol{h}_{T-s}^2(X_s^T) - \int_0^t \CL_{T-s} \ol{h}_{T-s}^2(X_{s}^T) ds.\]
Combining this with \eqref{gl::eqn::h_square_bound} implies
\[ \ol{h}_{T}^2(x) \leq  \ol{h}_{T- \tau}^2(X_{\tau}^T) - M_{\tau}.\]
Taking expectations of both sides, using the uniform integrability of the martingale $M_{t \wedge \tau}$, and invoking the optional stopping theorem proves the lemma.
\end{proof}

\section{Moment Estimates}
\label{sec::expectation}

It will be rather important for us to have control on the exponential moments of $h$ conditional on $\CK  = \cap_{x \in D} \{ a(x) \leq h(x) \leq b(x)\}$ near $U$.  Such an estimate does not follow from the exponential Brascamp-Lieb inequality since this only bounds the \emph{centered exponential moment} in terms of the corresponding moment for the \emph{unconditioned DGFF}, the latter of which is of polynomial order in $R = \diam(D)$ in the bulk.  It will also be important for us to know that $h(x) - a(x)$ for $x \in V_+ \cup V$ and $b(x) - h(x)$ for $x \in V_+ \cup V$ are uniformly positive in expectation conditional on $\CK$.

\begin{lemma}
\label{te::lem::expectation}
Assume $(\partial)$, $(C)$,  and fix $\eta \in (0,1/2)$.  There exists constants $c_1 = c_1(\eta)$ and $c_2 = c_2(\ol{\Lambda}, \eta)$ such that the following holds.  If $v \in D$, $r > (\log R)^{c_1}$ is such that $B(v,r^{1+3\eta} \wedge R) \cap U$ contains a connected subgraph $U_0$ of $U$ with $U_0 \cap \partial B(v,r^{1+3\eta} \wedge R) \neq \emptyset$ and $\dist(v,U_0) \leq r$, then
\begin{equation}
\label{te::eqn::exponential_moment}
\E[ \exp(|h(v)|) | \CK] \leq c_2 r^{c_2}.
\end{equation}
\end{lemma}

The reason for the hypothesis that there is a large, connected subgraph $U_0$ of $U$ near $v$ is to ensure that symmetric random walk with bounded rates initialized at $v$ is much more likely to hit $U_0$ before exiting a ball of logarithmic size around $v$.  Note in particular that this hypothesis trivially holds when $U$ consists of a path in $D$ connected to and along with $\partial D$.

The idea of the proof is to use repeatedly the stochastic domination results of the previous section along with an iterative argument to reduce the problem to a GL model on a domain whose size is polynomial in $r$.  Specifically, we without loss of generality assume $v \in D_W \equiv D \setminus W$ where $W = U \setminus V_+$.  By stochastic domination, it suffices to control $\E[\exp(h^{D_W}(v))|\CK^{D_W}]$ where $h^{D_W}$ has the law of the GL model on $D_W$ with the same boundary conditions as $h$ on $\partial D$, constant boundary conditions $\ol{\Lambda}$ on $W$, and $\CK^{D_W} = \cap_{x \in V_+} \{a(x) \leq h^{D_W}(x)\}$.  By hypothesis, there exists $u \in W$ with $|u-v| \leq r+2$, hence the exponential Brascamp-Lieb inequality applied to $h^{D_W}|\CK^{D_W}$ implies that the centered, exponential moments of $(h^{D_W}|\CK^{D_W})(v)$ are polynomial in $r$.  This reduces the problem to estimating $\E[h^{D_W}(v) | \CK^{D_W}]$.  The idea now is to prove an \emph{a priori} estimate of $\E[h^{D_W}(v)|\CK^{D_W}]$ using the FKG inequality, then use the method of dynamic coupling repeatedly to construct a comparison between $\E[h^{D_W}(v)|\CK^{D_W}]$ and the expected height of a GL model on a ball with diameter which is polynomial in $r$.  This completes the proof since our \emph{a priori} estimate implies the latter is $O_{\ol{\Lambda}}(\log r)$.

\begin{proof}
We begin with the observation
\[ \E[ \exp(|h(v)|) | \CK] \leq \E[ \exp(h(v)) | \CK] + \E[ \exp(-h(v)) | \CK].\]
Let $W, D_W, h^{D_W}, \CK^{D_W}$ be as in the paragraph after the statement of the lemma.  By Lemma \ref{gl::lem::stoch_dom}, there exists a coupling of $h| \CK$, $h^{D_W} | \CK^{D_W}$ such that $h^{D_W} | \CK^{D_W} \geq h | \CK$, hence to bound $\E[\exp(h(v)) | \CK]$ it suffices to bound $\E[ \exp(h^{D_W}(v))| \CK^{D_W}]$.
By the exponential Brascamp-Lieb inequality (Lemma \ref{bl::lem::bl_inequalities}), we have
\begin{align*}
       &  \E[\exp( h^{D_W}(v)) | \CK^{D_W}]\\
\leq& \exp(\E[ h^{D_W}(v) \big| \CK^{D_W}])\E[\exp(h^{D_W}(v) - \E[h^{D_W}(v)|\CK^{D_W}]) | \CK^{D_W}]\\
\leq& \exp(\E[h^{D_W}(v) \big| \CK^{D_W}]) \E[\exp( C (h^{D_W})^*(v))]
\end{align*}
where $(h^{D_W})^*$ has the law of a zero-boundary DGFF on $D_W$ and $C = C(\CV) > 0$ is a constant depending only on $\CV$.  Since $\dist(v,V_+) \leq r$, there exists $w \in \partial D_W$ such that $|v - w| \leq r+2$ by (C), hence $\var((h^{D_W})^*(v)) = O(\log r)$.  The reason for this is that a random walk initialized at $v$ has probability $\Omega((\log r)^{-1})$ of hitting $w$ hence $W$ before visiting $v$ again after each successive visit.  This, in turn, implies $\E[\exp( C (h^{D_W})^*(v))] \leq C' r^{C'}$ for some $C' > 0$.  Consequently, to prove the lemma we just need to bound $\E[| h^{D_W}(v)| \big| \CK^{D_W}]$.  We will break the proof up into three main steps.  The first is to get an \emph{a priori} estimate on the behavior of the maximum, the second is to use a coupling argument to improve the estimate by comparison to a model on a smaller domain, and the third is to show how this coupling argument may be iterated repeatedly in order to get the final bound.

{\it Step 1.}  Let $A = \cup_{y \in D_W} \{h^{D_W}(y) \geq \alpha (\log R)\}.$
The goal of this step is to prove that $\p[ A | \CK^{D_W}] = O_{\ol{\Lambda}}(R^{-100})$
provided $\alpha = \alpha(\ol{\Lambda})$ is chosen sufficiently large.

Let $h^{D_W,\gamma}$ have the law of the GL model on $D_W$ with constant boundary conditions $\gamma C^{-1} (\log R)$ where $\gamma = \gamma(\ol{\Lambda})$ is to be chosen later.  Let $\CK^{D_W,\gamma} = \cap_{x \in D_W} \{ a(x) \leq h^{D_W,\gamma}(x)\}$.  Finally, let $A^\gamma$ be the event analogous to $A$ but with $h^{D_W}$ replaced with $h^{D_W,\gamma}$.  It suffices to show that $\p[ A^\gamma | \CK^{D_W,\gamma}] = O_{\ol{\Lambda}}(R^{-100})$
since by Lemma \ref{gl::lem::stoch_dom} we can couple the laws of $h^{D_W,\gamma}|\CK^{D_W,\gamma}$ and $h^{D_W}|\CK^{D_W}$ such that $h^{D_W,\gamma}|\CK^{D_W,\gamma} \geq h^{D_W}|\CK^{D_W}$ almost surely.  We will first prove that we can pick $\alpha = \alpha(\ol{\Lambda})$ large enough so our claim holds without conditioning:
\begin{equation}
\label{te::eqn::abound} \p[A^\gamma] \leq O_{\ol{\Lambda}}(R^{-100}),
\end{equation}
then show that $\p[\CK^{D_W,\gamma}] = 1-o(1)$ for $\gamma$ large enough.
By the exponential Brascamp-Lieb and Chebychev inequalities, for some $C > 0$ we have
\begin{align*}
  \p[h^{D_W,\gamma}(y) > \beta C^{-1} (\log R)] 
\leq& \exp( -\beta (\log R)) \E[\exp(Ch^{D_W,\gamma}(y))]\\
 \leq& \exp((O_{\ol{\Lambda}}(1)-\beta)(\log R)).
\end{align*}
Here, we are using that $\var(h^{D_W,\gamma}(y)) = O(\log R)$ and $\E[h^{D_W,\gamma}(y)] = O_{\ol{\Lambda}}(\log R)$.  The latter can be seen, for example, using Lemma \ref{gl::lem::hs_mean_cov} of \cite{M10}, the HS representation of the mean. Choosing $\beta = \beta(\gamma) > 0$ large enough along with a union bound now gives \eqref{te::eqn::abound}.

With $h^{D_W,0}$ the zero-boundary GL model on $D_W$, a similar argument with the Brascamp-Lieb and Chebychev inequalities yields
\begin{align*}
   \p[ h^{D_W,0}(y) \leq \gamma C^{-1} (\log R) - \ol{\Lambda}] = 1-O_{\ol{\Lambda}}(R^{-5})
\end{align*}
provided we choose $\gamma = \gamma(\ol{\Lambda})$ large enough.
Note that for $y \in D_W$, the symmetry of the law of $h^{D_W,0}$ about zero gives us
\begin{align*}
    &\p[h^{D_W,\gamma}(y) > a(y)]
\geq \p[ h^{D_W,0}(y) \geq \ol{\Lambda} - \gamma C^{-1} (\log R)]\\
 =& \p[ h^{D_W,0}(y) \leq \gamma C^{-1} (\log R) - \ol{\Lambda}]
  \geq 1 - O_{\ol{\Lambda}}(R^{-5}).
\end{align*}
Invoking the FKG inequality yields
\begin{align*}
  \p[ \CK^{D_W,\gamma}]
&\geq \prod_{y \in V_+} \p[ h^{D_W,\gamma}(y) \geq a(y)]
 \geq (1 - O_{\ol{\Lambda}}(R^{-5}))^{R^2}
 = 1 - O_{\ol{\Lambda}}(R^{-1}).
\end{align*}
Therefore $\p[ A^\gamma | \CK^{D_W,\gamma}] = O_{\ol{\Lambda}}(R^{-100})$, as desired.\newline

{\it Step 2.} We next claim that 
\[ |\E[h^{D_W}(v) | \CK^{D_W}]| = O_{\ol{\Lambda}}(\log r + \log \log R).\]
If $r \geq R^{1/3}$, then this is immediate from the previous part, so assume that $r < R^{1/3}$.
By the definition of $A$,
\begin{align*}
   &\E[\max_{y \in D_W} \big| h^{D_W}(y)|^p \big| \CK^{D_W}] 
= O_{\ol{\Lambda}}((\log R)^p) + \sum_{y \in D_W} \E[|h^{D_W}(y)|^p \one_{A}| \CK^{D_W}].
\end{align*}
Using that $\var[ h^{D_W}(y) | \CK^{D_W}] = O(\log R)$, the Brascamp-Lieb and Cauchy-Schwarz inequalities yield
\begin{align*} 
       &\E[| h^{D_W}(y)|^p \one_{A} \big|\CK^{D_W}]\\
\leq &\bigg( \E[ (h^{D_W}(y) - \E[|h^{D_W}(y)| \big| \CK^{D_W}])^{2p} | \CK^{D_W}] + (\E[|h^{D_W}(y)| \big| \CK^{D_W}])^{2p} \bigg)^{1/2} O_{\ol{\Lambda}}(R^{-50})\\
\leq &O_{\ol{\Lambda}}(R^{-20}) + \E[\max_{y \in D_W}|h^{D_W}(y)|^p \big| \CK^{D_W}] O_{\ol{\Lambda}}(R^{-20}).
\end{align*}
Inserting this into the previous equation and rearranging leads to the bound
\begin{align}
\label{te::eqn::max_bound}
\E[\max_{y \in D_W} |h^{D_W}(y)|^p \big|\CK^{D_W}] 
 = O_{\ol{\Lambda}}((\log R)^p).
\end{align}
Let $\delta > 1$; we will determine its precise value shortly.  Let $B_\delta^W = B(v,r^\delta) \cap D_W$, $\zeta = h^{D_W} |_{\partial B_\delta^W}$, and let $h^{\zeta}$ have the law of the GL model on $B_\delta^W$ with boundary condition $\zeta$ and with conditioning $a(x) \leq h^{\zeta}(x) \leq b(x)$.  Let $h^{\zeta,0}$ have the law of the GL model on $B_\delta^W$ with the same boundary conditions as $h^{\zeta}$ on $(\partial B_\delta^W) \cap W$ and with zero boundary conditions on $(\partial B_\delta^W) \setminus W$.  Finally, let $(h_t^{\zeta}, h_t^{\zeta,0})$ be the stationary coupling of the corresponding dynamic models.  With $\ol{h}_t = h_t^{\zeta} - h_t^{\zeta,0}$, by Lemma \ref{gl::lem::square_bound} we have $\ol{h}_0^2(z) \leq \E_z[\ol{h}_{-\tau}^2(X_{\tau})]$,
where the expectation is taken only over the randomness of the Markov process $X = X^0$ as in subsection \ref{subsec::rw_difference} initialized at $z$ and $\tau$ its time of first exit from $B_\delta^W$.  It follows from \cite{M10}, Lemma \ref{symm_rw::lem::beurling} that if $z \in B(v,r^{1+\eta \delta})$, then the probability that $X_t$ makes it to the outer boundary of $\partial B(v,r^{\delta})$ before hitting $(\partial B_\delta^W) \cap W$ is $O(r^{-\rho_{\rm B}(\delta(1-\eta) - 1)})$, some $\rho_{\rm B} > 0$ depending only on $\CV$.  Combining this with \eqref{te::eqn::max_bound} implies $\E[|\ol{h}_0(z)|^2] = O_{\ol{\Lambda}}(r^{-\rho_{\rm B}(\delta(1-\eta) - 1)}(\log R)^2).$
Hence taking 
\[ \gamma_0 =  \frac{4}{ \rho_{\rm B} (1-\eta)} \text{ and } \delta = \frac{1}{1-\eta} + \gamma_0 \frac{\log \log R}{\log r},\]
we get that
\begin{equation}
\label{te::eqn::step2_bound}
 \big( \E[|\ol{h}_0(z)|^2] \big)^{1/2} = O_{\ol{\Lambda}}((\log R)^{-1}).
 \end{equation}
Note that with this choice of $\delta$ we have that $r^{\delta} \leq r^{1+3\eta}$ provided we take $c_1 = c_1(\eta)$ large enough.  This implies our claim as Step 1 gives
\[ \E[(h^{\zeta,0})^2(v)] = O_{\ol{\Lambda}}(\log r^\delta) = O_{\ol{\Lambda}}(\log r + \log \log R).\]

{\it Step 3.}  In the previous step, we took our initial estimate of $O_{\ol{\Lambda}}(\log R)$ and improved it to $O_{\ol{\Lambda}}(\log r + \log \log R)$ using a coupling argument to reduce the problem to one on a domain of size $r^{\delta} = r^{1/(1-\eta)} (\log R)^{\gamma_0}$.  Assume that $(\log R)^{\gamma_0} \geq r^{1/(1-\eta)}$, for otherwise we are already done.  Then $r^{\delta} \leq (\log R)^{\gamma}$ for $\gamma = 2\gamma_0$.  That is, the new domain produced by one application of Step 2 has diameter which is poly-log in the diameter of the initial domain.  Suppose that $n_0$ is the smallest positive integer such that $\log^{(n_0)}(R) < 100 r$, where $\log^{(n_0)}$ indicates the $\log$ function applied $n_0$ times.  It is not difficult to see that if we run the argument of Step 2 successively $n_0$-times we are left with a domain with size which is polynomial in $r$.  Equation \eqref{te::eqn::step2_bound} implies that the sum of the $L^2$ error that we accrue from iterating this procedure is bounded from above by
\[ O_{\ol{\Lambda}} \left( \sum_{m=1}^{n_0} \frac{1}{\exp^{(m)}(c_0)} \right) \leq O_{\ol{\Lambda}} \left( \sum_{m=1}^\infty \frac{1}{\exp^{(m)}(c_0)} \right)  < \infty,\]
where $\exp^{(m)}$ denotes the exponential function applied $m$ times and $c_0 = c_0(\eta, \ol{\Lambda}) > 0$ is some fixed constant.
Since the final domain is polynomial in $r$, the desired result follows by another application of Step 1.
\end{proof}

We are now going to show that the conditional expectation of the height along the interface is uniformly larger than $a(v)$ for $v \in V_+$ and less than $B(v)$ for $v \in V_-$.

\begin{lemma}
\label{te::lem::expectation_pos}
Assume $(\partial), (C)$, and that $r > (\log R)^{2 c_1}$ with $c_1 = c_1(1/4)$ as in Lemma \ref{te::lem::expectation}.  Suppose $v \in U$ is such that the connected component of $U \cap B(v,r)$ containing $v$ has non-empty intersection with $\partial B(v,r)$.  Then
\begin{align}
 \label{te::eqn::abs_upper_lower_bound}
  \E[h(v) - a(v) |\CK] &\geq \tfrac{1}{c_3} \text{ for } v \in V_+ \cup V,\\
  \E[b(v) - h(v)|\CK] &\geq \tfrac{1}{c_3} \text{ for } v \in V_- \cup V
\end{align}
for $c_3 > 0$ a universal constant.
\end{lemma}
\begin{proof}
Fix $v \in V_+$ which satisfies the hypotheses of the lemma and let $v_1,\ldots,v_m$ be the neighbors of $v$ in $D$.  Let $M > 0$ be some fixed positive constant.  By the explicit form of the law of $h$ conditional on $h(v_1),\ldots,h(v_m), \CK$, we obviously have that
\[ \E[h(v) - a(v)\big| |h(v_1)|,\ldots,|h(v_m)| \leq M, \CK] \geq c(M) > 0.\]
This yields \eqref{te::eqn::abs_upper_lower_bound} since \eqref{te::eqn::exponential_moment} implies
\[ \p[|h(v_1)|,\ldots,|h(v_m)| \leq M | \CK] \geq \epsilon_1 > 0.\]
\end{proof}

We are now going to prove that the mean height of the field at a point $v$ in $U$ remains uniformly bounded conditional on the boundary data of the field in a large ball around $v$ provided that it is of at most logarithmic height.  The proof follows by reusing the coupling and stochastic domination procedure from the proof of Lemma \ref{te::lem::expectation}.

\begin{lemma}
\label{te::lem::local_bound}
Assume ($\partial$) and $(C)$.  For each $\alpha > 0$ there exists $p_0 > 0$ and $c_4 = c_4(\ol{\Lambda},\alpha)$ such that the following holds.  For each $r \geq 0$ and $v \in U$ such that the connected component of $U \cap B(v,r)$ containing $v$ has non-empty intersection with $\partial B(v,r)$, we have
\begin{equation}
\label{te::eqn::boundary_conditioned_mean}
  \E[ |h(v)| \big| \CK, h|_{\partial B(v,\wt{r})}] \one_{A^c} \leq c_4
\end{equation}
for every $(\log r)^{p_0} \leq \wt{r} \leq r$, and $A = \{ \max_{x \in B(v,r)} |h(x)| > \alpha \log r\}$.
\end{lemma}
\begin{proof}
Without loss of generality, it suffices to consider $v \in V_+$ since the argument is symmetric for $v \in V_-$ and is trivial if $v \in V$.  We re-apply the idea of Step 2 from the proof of Lemma \ref{te::lem::expectation}.  With $W, D_W$ as in the proof of Lemma \ref{te::lem::expectation}, let $\zeta = h|_{\partial B_W}$ where $B_W = B(x,\wt{r}) \cap D_W$ and let $h^{\zeta}$ have the law of the GL model on $B_W$ with boundary condition $\zeta$ conditional on $\cap_{x \in B_W} \{a(x) \leq h^{\zeta}(x) \leq b(x)\}$.  Let $\partial_1$ denote the part of $\partial B_W$ which does not intersect $W$ and $\partial_2$ the part which is contained in $W$.  Assume that $h^{\zeta,0}$ has the law of the GL model on $B_W$ with $h^{\zeta,0}|_{\partial_1} \equiv 0$, $h^{\zeta,0}|_{\partial_2} \equiv \zeta$, and the same conditioning as $h^{\zeta}$.  With $(h_t^{\zeta}, h_t^{\zeta,0})$ the stationary coupling of $h^\zeta,h^{\zeta,0}$, Lemma \ref{gl::lem::square_bound} implies $\ol{h}_0^2(v) \leq \E_v[\ol{h}_{-\tau}^2(X_{\tau})]$ where $\tau$ is the first exit time of $X = X^0$ from $B_W$.  Using that $\ol{h} = h^\zeta - h^{\zeta,0} \equiv 0$ on $\partial_2$, Lemma \ref{symm_rw::lem::beurling} of \cite{M10} thus implies $\ol{h}_0^2(v) \leq O_{\ol{\Lambda}}( \wt{r}^{-\rho_B} \max_{x \in \partial_1} |\zeta(x)|^2)$.  Therefore
\begin{equation}
\label{te::eqn::part2_bound}
 \E[ \ol{h}_T^2(v) | \zeta] \one_{A^c} = O_{\alpha}( \wt{r}^{-\rho_{\rm B}} (\log r)^2).
\end{equation}
Assume now $p_0 > 2 / \rho_{\rm B}$ so that $\wt{r}^{-\rho_{\rm B}} (\log r)^2 = O(1)$.  Then the right hand side of \eqref{te::eqn::part2_bound} is $O_{\alpha}(1)$.  Therefore it suffices to show that $\E[ |h^{\zeta,0}(v)| \big| \zeta] \one_{A^c} = O_{\ol{\Lambda}}(1)$.  Since $h^{\zeta,0}(v) \geq -\ol{\Lambda}$, we actually just need to prove $\E[ h^{\zeta,0}(v) \big| \zeta] \one_{A^c} = O_{\ol{\Lambda}}(1)$.  Let $h^{\ol{\Lambda}}$ have the law of the GL model on $B_W$ with $h^{\ol{\Lambda}} |_{\partial_1} \equiv 0$, $h^{\ol{\Lambda}} |_{\partial_2} \equiv \ol{\Lambda}$, and the same conditioning as $h^\zeta$.  As $\zeta|_{\partial_2} \leq \ol{\Lambda}$, Lemma \ref{gl::lem::stoch_dom} implies that the stationary coupling $(h_t^{\zeta,0}, h_t^{\ol{\Lambda}})$ satisfies $h_t^{\ol{\Lambda}} \geq h_t^{\zeta,0}$ almost surely.  As $h_t^{\ol{\Lambda}}$ satisfies the hypotheses of Lemma \ref{te::lem::expectation}, we consequently have $\E[ h^{\ol{\Lambda}}(v)] = O_{\ol{\Lambda}}(1)$, hence $\E[ h^{\zeta,0}(v) | \zeta] \one_{A^c} = O_{\ol{\Lambda}}(1)$.
\end{proof}

\section{Coupling Near the Interface}
\label{sec::hic}

Throughout this section, we shall assume $(\partial)$ and $(C)$.  Suppose that $h_t$ solves \eqref{gl::eqn::cond_dynam} initialized at stationarity and conditioned so that $a(x) \leq h_t(x) \leq b(x)$ for every $x \in D$.  Our first goal will be to show that the law of $h_t(z)$ near some $x_0 \in U$ does not depend too strongly on the precise geometry of $U$ nor $D$ far away from $x_0$ (Proposition \ref{hic::prop::hic}).  The next objective is to boost the estimate of approximate harmonicity of the mean given by Theorem \ref{harm::thm::coupling} of \cite{M10} very close to $\partial D$ and $U$.  This will, in particular, prove Theorem \ref{intro::thm::harmonic_up_to_boundary}.  We end the section by combining Proposition \ref{hic::prop::hic} with Lemma \ref{hic::lem::harmonic_boundary} to show that, under the additional hypothesis of $(\pm)$, the mean height remains uniformly negative close to $V_-$ and uniformly positive near $V_+$.  We remark that the latter is one of the crucial points of the proof.

\subsection{Continuity of the Law Near $U$}

We now work towards establishing Proposition \ref{hic::prop::hic}.  Before we proceed, it will be helpful to give an overview of the proof.  We will first argue (Lemma \ref{hic::lem::min}) that along $U$ there are many points $y$ where $h_t(y)$ is very close to either $a(y)$ or $b(y)$.  The reason that one should expect this to be true is that, for any fixed $y$, this holds with positive probability and, using the Markovian structure, we are able to argue a certain amount of approximate independence between different $y$.  Then we will fix another instance $\wt{h}_t$ of the GL model, though on possibly a different domain $\wt{D}$ and region of conditioning $\wt{U}$ which agrees with $U$ near $x_0$, and take the stationary coupling of $h_t$ and $\wt{h}_t$.  By the energy inequality, we can find large, connected, non-random subsets of deterministic bonds $b$ in $U$ near $x$ where $\E | \nabla \ol{h}_t(b)|$ is small, with $\ol{h} = h - \wt{h}$ as usual.  This implies $\ol{h}_t$ is nearly constant throughout each region.  We will then combine this with Lemma \ref{hic::lem::min} to argue that this constant must be very close to zero, for otherwise either $h$ or $\wt{h}$ would violate the constraint (C).  The result then follows by recoupling $h,\wt{h}$ near $x_0$ with boundary values fixed in the ``good'' regions and then applying the random walk representation.

\begin{lemma}
\label{hic::lem::min}
Fix $\delta > 0$, $r > 0$, and $n = [r^\delta]$.  Suppose that $x_1,\ldots,x_n \in V_+ \cup V$ are distinct.  Assume that $|x_i-x_j| \geq 2r_0 \equiv 2(\log r)^{p_0}$, where $p_0$ is as in Lemma \ref{te::lem::local_bound}, and that for each $i$, the connected component $U_i$ of $U$ containing $x_i$ satisfies $U_i \cap \partial B(x_i,r_1) \neq \emptyset$ for $r_1 = (\log R)^{2c_1}$, $c_1$ as in Lemma \ref{te::lem::expectation}.  For each $\epsilon > 0$, we have
\begin{equation}
\label{hic::eqn::min}
 \p[ \cap_{k=1}^n\{|h(x_k) - a(x_k)| \geq n^{\epsilon-1}\}]  = O_{\ol{\Lambda}, \epsilon}(r^{-50}).
\end{equation}
Similarly, if $x_1,\ldots,x_n$ are distinct in $V_- \cup V$, then
\begin{equation}
\label{hic::eqn::max}
 \p[ \cap_{k=1}^n\{|h(x_k) - b(x_k)| \geq n^{\epsilon-1}\}] = O_{\ol{\Lambda},\epsilon}(r^{-50}).
\end{equation}
\end{lemma}
\begin{proof}
Without loss of generality, if suffices to prove \eqref{hic::eqn::min}.  To this end, for each $k$, let $x_{k1},\ldots,x_{km_k}$, $m_k \leq 4$, be the neighbors of $x_k$ in $D$.  By Lemma \ref{te::lem::expectation}, we know that if $\alpha > 0$ is large enough, then the event 
\[ A = \cup_{k} \{ \max_{x \in B(x_k,r)} |h(x)| > \alpha (\log r)\}\]
satisfies $\p[A] = O_{\ol{\Lambda}}(r^{-50})$ and, by Lemma \ref{te::lem::local_bound},
\[ \E\big[ |h(x_{ki})|\ \big|\ h|_{\partial B(x_k,r_0)} \big] \one_{A^c} = O_{\ol{\Lambda}}(1).\]
Combining this with Markov's inequality implies the existence of $M = M(\ol{\Lambda}) > 0$ sufficiently large such that for each $k$ we have 
\begin{equation}
\label{te::eqn::neighbor_bound}
\p[ E_k\  \big|\  h|_{\partial B(x_k,r_0)} ] \one_{A^c} \geq \frac{1}{2} \one_{A^c} \text{ where }
 E_k = \cap_{\ell=1}^{m_k} \{ |h(x_{k\ell})| \leq M\}.
\end{equation}
From the explicit form of the density of the law of $h(x_k)$ conditional on $h(x_{k1}),\ldots,h(x_{km_k})$ with respect to Lebesgue measure, it is clear that
\begin{equation}
\label{hic::eqn::prob_lower_bound}
\p[ h(x_k) - a(x_k) \leq \beta \big| E_k] \geq a_1 \beta
\end{equation}
for some $a_1 = a_1(M) > 0$ and all $\beta \in [0,\beta_0]$ for some $0 < \beta_0 = \beta_0(M)$.  Let
\[ B_M = \{ 1 \leq k \leq n : |h(x_{k1})| \leq M, \ldots, |h(x_{km_k})| \leq M\}.\]
It is immediate from \eqref{te::eqn::neighbor_bound} that there exists a random variable $Z$ which, conditional on $A^c$, is binomial with parameters $(n,\tfrac{1}{2})$ such that $|B_M| \one_{A^c} \geq Z \one_{A^c}$.  Consequently,
\begin{align*}
      \p[ |B_M| \geq \tfrac{1}{4} n \big| A^c]
&\geq \p[ Z \geq \tfrac{1}{4} n \big| A^c]
  \geq 1-O(e^{-a_2 n}),
\end{align*}
some $a_2 > 0$.  The lemma now follows as by \eqref{hic::eqn::prob_lower_bound},
\begin{align*}
  \p[ \cap_{k=1}^n \{h(x_k) - a(x_k) \geq n^{\epsilon-1}\} \big| |B_M| \geq \tfrac{1}{4} n]
&\leq (1-a_1 n^{\epsilon-1})^{n/4}.
\end{align*}
\end{proof}

We assume that $\wt{D} \subseteq \Z^2$ is another bounded domain with distinguished subsets of vertices $\wt{V}_-,\wt{V}_+, \wt{V}$ and with functions $\wt{a} \leq \wt{b}$ satisfying the hypotheses of $(C)$.  Let $\wt{h}_t$ solve \eqref{gl::eqn::cond_dynam} with stationary initial conditions, conditioned to satisfy $\wt{a}(x) \leq \wt{h}(x) \leq \wt{b}(x)$ for all $x \in \wt{D}$, and boundary condition satisfying $(\partial)$.  Further, we suppose there exists $x_0 \in D \cap \wt{D}$ and $r \geq 5(\log R)^{2c_1}$, $c_1 > 0$ as in Lemma \ref{te::lem::expectation}, such that 
\begin{enumerate}
 \item \label{hic::assump::ball_contained} $B(x_0,2r) \subseteq D \cap \wt{D}$,
 \item \label{hic::assump::agree_geom} $B(x_0,2r) \cap U = B(x_0,2r) \cap \wt{U}$,
 \item \label{hic::assump::agree_cond} $a = \wt{a}, b = \wt{b}$ in $B(x_0,2r)$, and
 \item \label{hic::assump::curve} the connected component of $U_0 \equiv U \cap B(x_0,2r)$ containing $x_0$ has non-empty intersection with $\partial B(x_0,2r)$. 
\end{enumerate}

\begin{figure}
     \centering
          \includegraphics[width=.6\textwidth]{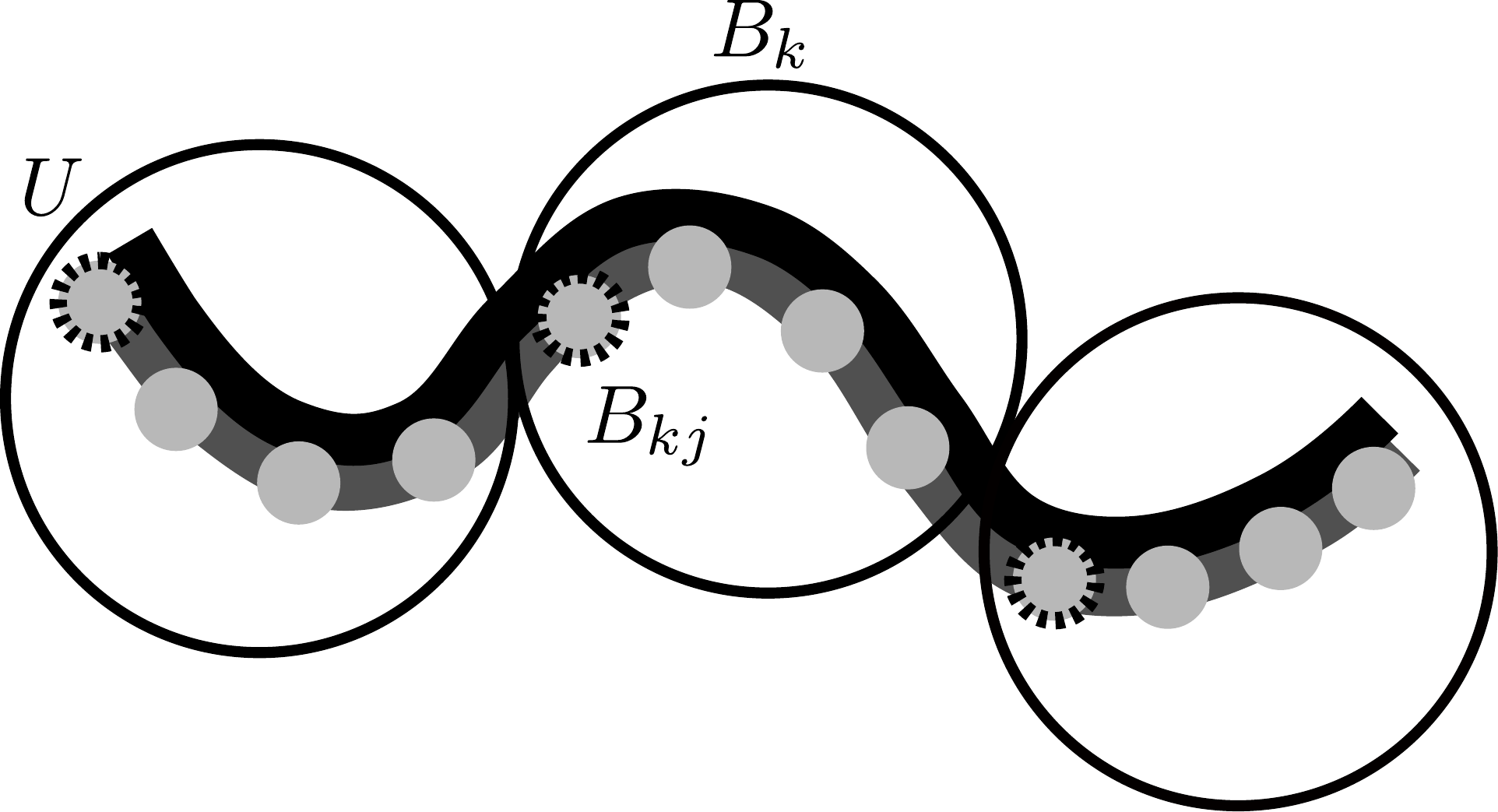}
          \caption{The setup for constructing the net $Y$ in the first step of the proof of Proposition \ref{hic::prop::hic}.  The large circles indicate the balls $B_k$ associated with the initial $r^{99/100}$ net $(x_n : n \leq N_r)$ and the smaller disks are the balls $B_{kj}$ of the corresponding nets of the $B_k$.  The collection of disks with dashed boundary indicates one the groups $\cup_k B_{kj}$ used to construct $Y$.}
\end{figure}

\begin{proposition}
\label{hic::prop::hic}
For every $\epsilon > 0$, there exists $\delta > 0$ independent of $r,h,\wt{h}$ such that there is a coupling of the laws of $h$, $\wt{h}$ satisfying
\[ \E\bigg[ \max_{x \in B(x_0,r^{1-\epsilon})} |h(x) - \wt{h}(x)| \bigg] = O_{\ol{\Lambda}}(r^{-\delta}).\]
\end{proposition}

We remark that the coupling constructed in Proposition \ref{hic::prop::hic} will not be the same as the stationary coupling.

\begin{proof}
{\it Step 1.}  Construction of the net.  First, Lemma \ref{te::lem::expectation} implies 
\[ \max_{z \in B(x_0,2r)} \E[ h_t^2(z) + \wt{h}_t^2(z)] = O_{\ol{\Lambda}}( (\log r)^2).\]
Combining this with Lemma \ref{gl::lem::grad_error} and assumptions \eqref{hic::assump::ball_contained}-\eqref{hic::assump::curve}, we see that the stationary coupling of $(h_t,\wt{h}_t)$ satisfies
\begin{equation}
\label{hic::eqn::grad_bound}
\sum_{b \in B^*(x_0,r)} \E |\nabla \ol{h}(b)|^2 = O_{\ol{\Lambda}}(r^{\epsilon}).
\end{equation}
Let $(x_n : n \leq N_r)$ be an $r^{99/100}$-net of $U_0$ contained in $V_+ \cup V$ and let $n_0 = r^{1/4}$.  Assumption \eqref{hic::assump::curve} implies $U \cap B(x_k,r^{1/2}) \geq r^{1/2}$.  Hence, we can find an $r^{1/4}$-net $(x_{kj})$ of $U \cap B(x_k, r^{1/2})$ of cardinality at least $n_0$.  Let $B_{kj}  = B(x_{kj}, r^{1/20})$ and $U_{kj} = U \cap B_{kj}$.  Trivially, $|U_{kj}| \leq |B_{kj}| \leq 10 r^{1/10}$.  Since the balls $B_{kj}$ are pairwise disjoint, \eqref{hic::eqn::grad_bound} implies 
\[ \E \left[ \sum_{j=1}^{n_0} \left( \sum_{k=1}^{N_r} \sum_{b \in B_{kj}^*} |\nabla \ol{h}(b)|^2 \right)\right] = O_{\ol{\Lambda}}(r^{\epsilon}).\]
Therefore there exists $1 \leq j_0 \leq n_0$ such that
\[ \E \left[ \sum_{k=1}^{N_r} \sum_{b \in B_{kj_0}^*} |\nabla \ol{h}(b)|^2 \right] = O_{\ol{\Lambda}}( r^{\epsilon-1/4}).\]
Noting that $N_r = O(r^2 / r^{198/100}) = O(r^{1/50})$ and $|B_{kj_0}^*| = O(r^{1/10})$, the Cauchy-Schwarz inequality implies
\[ \E \left[ \sum_{k=1}^{N_r} \sum_{b \in B_{kj_0}^*} |\nabla \ol{h}(b)| \right] = \big( O(r^{1/50}) O(r^{1/10}) O_{\ol{\Lambda}}( r^{\epsilon-1/4}) \big)^{1/2} = O_{\ol{\Lambda}}(r^{-1/20}),\]
assuming we have chosen $\epsilon > 0$ small enough.
As each of the sets $B_{kj_0}$ is connected, we consequently have that for each $1 \leq k \leq N_r$ there exists (random) $e_{k}$ with
\begin{equation}
\label{hic::eqn::e_bound}
 \E \left[ \sum_{k=1}^{N_r} M_k \right] = O_{\ol{\Lambda}}(r^{-1/20}) \text{ where } M_k = \max_{y \in B_{kj_0}} |\ol{h}(y) - e_k|.
\end{equation}

We next claim that $\E |e_k| = O_{\ol{\Lambda}}(r^{-1/20})$ uniformly in $k$.  To see this, fix $y \in B_{kj_0} \cap (V_+ \cup V)$.  By rearranging the inequality $e_k - \ol{h}(y) \leq  M_k$ and using $\wt{h}(y) \geq a(y)$, we see that $h(y) - a(y) + M_k \geq e_k$.  By a symmetric argument except starting with the inequality $\ol{h}(y) - e_k \leq M_k$, we also have $\wt{h}(y) - a(y) + M_k \geq -e_k$.  Combining the two inequalities yields  
\[ -\wt{X}_k - M_k \leq e_k \leq X_k + M_k\]
where
\[ \wt{X}_k = \min_{y \in U_{kj_0}} |\wt{h}(y) - a(y)| \text{ and } X_k = \min_{y \in U_{kj_0}} |h(y) - a(y)|.\]
Let $E_k = \{ X_k \geq r^{-1/20}\}$ and $\wt{E}_k = \{ \wt{X}_k \geq r^{-1/20}\}$.  From Lemma \ref{hic::lem::min}, we have both $\p[E_k] = O_{\ol{\Lambda}}(r^{-50})$ and $\p[\wt{E}_k] = O_{\ol{\Lambda}}(r^{-50})$.  Note that
\[ \E[X_k] = O_{\ol{\Lambda}}(r^{-1/20}) + \E[X_k \one_{E_k}] = O_{\ol{\Lambda}}(r^{-1/20}) + \sqrt{ \E[X_k^2] O_{\ol{\Lambda}}(r^{-50})}.\]
By Lemma \ref{te::lem::expectation}, $\E[ X_k^2] = O_{\ol{\Lambda}}( (\log r)^2)$, hence $\E[X_k] = O(r^{-1/20})$.  Similarly, $\E[\wt{X}_k] = O(r^{-1/20})$.  Combining this with \eqref{hic::eqn::e_bound} implies
\[ \sum_{k=1}^{N_r} \E[|e_k|] \leq \sum_{k=1}^{N_r} \E[|M_k|] + \sum_{k=1}^{N_r}  \E \big[X_k + \wt{X}_k \big] = O_{\ol{\Lambda}}(r^{-1/50}).\]
By yet another application of \eqref{hic::eqn::e_bound}, for each $1 \leq k \leq N_r$ we can pick $y_k \in U_{kj_0}$ such that $Y = (y_k : 1 \leq k \leq N_r)$ is an $r^{99/100}$-net of $U_0$ satisfying
\[ \sum_{k=1}^{N_r} \E[|\ol{h}(y_k)|] = O_{\ol{\Lambda}}(r^{-1/50}).\]

{\it Step 2.}  Coupling at the interface.  Let $B_Y = B(x_0,r) \setminus Y$.  Conditional on $\zeta = h|_{\partial B_Y}$, let $h_t^{B_Y}$ be a dynamic version of the GL model on $B_Y$ with $h^{B_Y}|_{\partial B_Y} = \zeta$ and the same conditioning as $h$ off of $Y$.  Define $\wt{h}_t^{B_Y}$ analogously, let $(h_t^{B_Y}, \wt{h}_t^{B_Y})$ be the corresponding stationary coupling, and let $\ol{h}^{B_Y} = h^{B_Y} - \wt{h}^{B_Y}$.  With $X_t = X_t^0$ defined as in subsection \ref{subsec::rw_difference}, Lemma \ref{gl::lem::square_bound} implies that
\begin{equation}
\label{hic::eqn::cai_bound}(\ol{h}_0^{B_Y})^2(x) \leq \E_x [ (\ol{h}_{-  \tau}^{B_Y})^2(X_{\tau})]
\end{equation}
where $\tau = \tau_Y \wedge \tau_r$, 
\[ \tau_Y = \inf\{t > 0 : X_t \in Y\} \text{ and } \tau_r = \inf\{t > 0 : X_t \in \partial B(x_0,r)\},\]
and the expectation is taken only over the randomness of $X$ initialized at $x$.  We claim that there exists $\rho = \rho(\CV,\epsilon) > 0$ such that $\p_x[ \tau_Y \leq \tau_r] \geq 1-O(r^{-\rho})$ for $x \in B(x_0,r^{1-\epsilon})$.  The reason for this is that after hitting the center ring of the annulus $A_k = A(x_0, 2^k r^{1-\epsilon}, 2^{k+1} r^{1-\epsilon})$, $X$ runs a full circle around $A_k$ hence hits $U_0$ before exiting $A_k$ with positive probability (see the proof of Lemma \ref{symm_rw::lem::beurling} of \cite{M10}).  On this event, upon hitting $U_0$, there exists $y \in Y$ with distance at most $r^{99/100}$ of $X$, hence $X$ has positive probability of hitting $y$ before exiting $A_k$.  The claim now follows as there are at least $c(\epsilon) \log r$ chances for this to occur.  Consequently, by \eqref{hic::eqn::cai_bound} we have that
\[ (\ol{h}_0^{B_Y})^2(x) \leq \max_{y \in Y} |\ol{h}(y)| + O(r^{-\rho}) \max_{y \in \partial B(x_0,r)} |\ol{h}(y)| \equiv A_1 + O(r^{-\rho}) A_2.\]
The first part of the lemma implies $\E[A_1] = O_{\ol{\Lambda}}(r^{-1/50})$ and Lemma \ref{te::lem::expectation} implies $\E[A_2] = O_{\ol{\Lambda}}(\log r)^2$.  Taking $\delta = (\rho/2) \wedge \tfrac{1}{50}$ proves the proposition.
\end{proof}

\subsection{Harmonicity of the Mean Near the Boundary}

In view of Proposition \ref{hic::prop::hic}, we now boost the estimate of harmonicity of the mean coming from Theorem \ref{harm::thm::mean_harmonic} of \cite{M10} all of the way up to $\partial D$ and $U$.  This result is \emph{only} applicable for the mean; it does not imply that we can \emph{couple harmonically} up to the boundary.  Recall that $E(r) = \{x \in D : \dist(x, \partial E) \geq r\}$ for $E \subseteq D$.  Let $D_U = D \setminus U$.

\begin{figure}
     \centering
          \includegraphics[width=.7\textwidth]{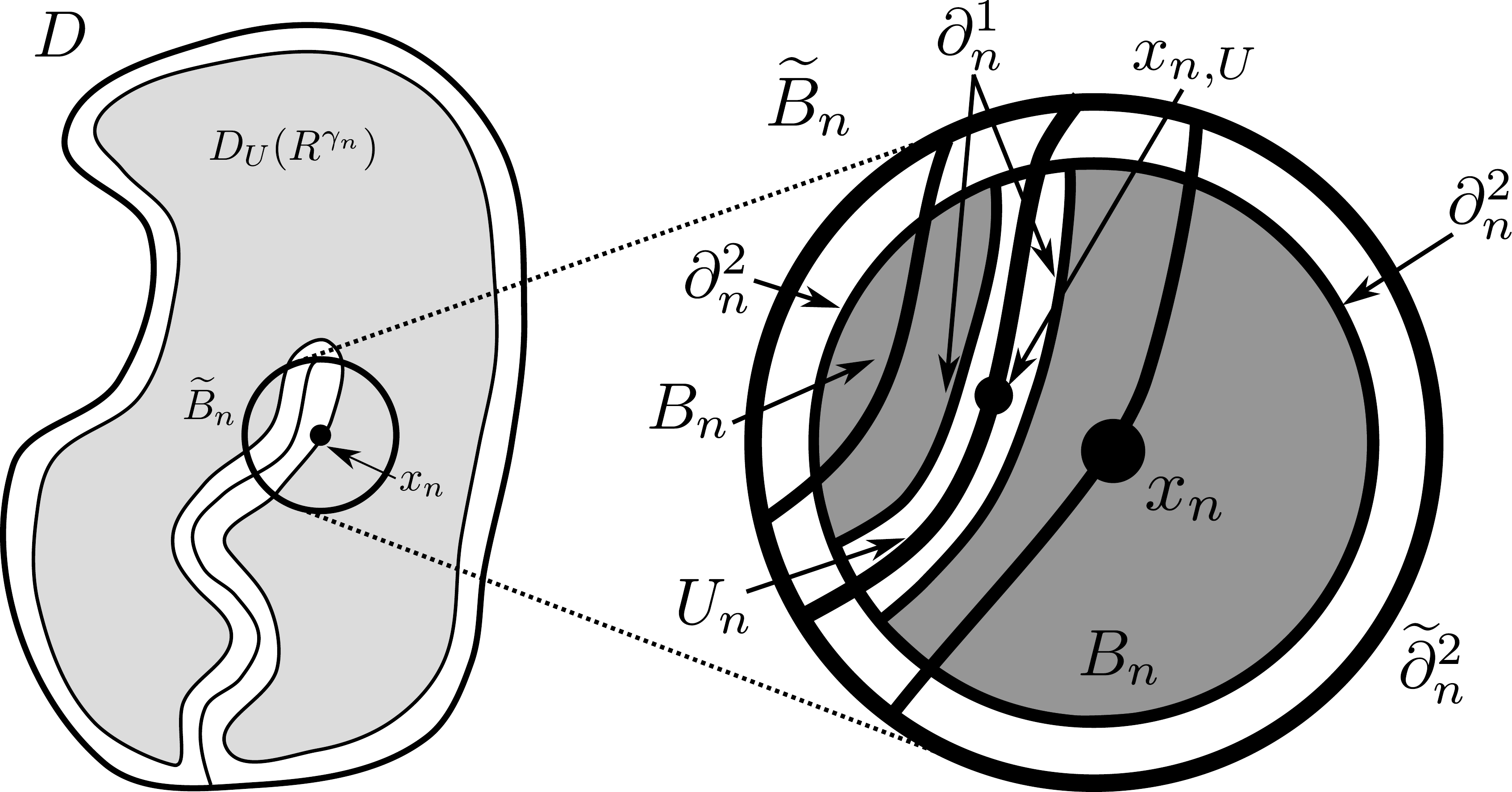}
          \caption{A typical step in the localization procedure used in the proof of Lemma \ref{hic::lem::harmonic_boundary}.  The dark gray region on the right side indicates $B_n$.}
\end{figure}

\begin{lemma}
\label{hic::lem::harmonic_boundary}
Assume that for every connected component $U_0$ of $U$ there exists a connected component $U_1$ of $U$ with $\dist(U_0, U_1) \leq (\diam(U_0))^2$ and $\diam(U_1) \geq (\log R)^{2c_1}$ with $c_1$ as in Lemma \ref{te::lem::expectation}.  There exists $\delta = \delta(\CV) > 0$ such that if $\wh{g}$ is the harmonic extension of $\E[h(x)]$ from $\partial D_U(r)$ to $D_U(r)$, then 
\[ \max_{x \in D_U(r)} |\E[h(x)] - \wh{g}(x)| = O_{\ol{\Lambda}}(r^{-\delta}).\]
\end{lemma}
\begin{proof}
We are going to provide a proof for the lemma under the stronger hypothesis that $U$ is connected (as is the case corresponding to the exploration path $\gamma$ of our main theorem), since moving to the more general case is straightforward though notationally more complicated.  Let $d(x) = |\E[h(x)] - \wh{g}(x)|$.
Fix $\epsilon,\delta > 0$ so that Theorem \ref{harm::thm::mean_harmonic} of \cite{M10} holds and let $\gamma_n = (1-\epsilon)^n$.  Let $x_1$ be a point in $D_U(R^{\gamma_1})$ which maximizes $d|_{D_U(R^{\gamma_1})}$.  For each $n \geq 2$, let $x_n$ be a point in $D_U(R^{\gamma_{n}}) \setminus D_U(R^{\gamma_{n-1}})$ which maximizes $d|_{D_U(R^{\gamma_{n}}) \setminus D_U(R^{\gamma_{n-1}})}$, and let $\Delta_n = d(x_n)$.  We are going to prove that
\begin{equation}
   \label{hic::eqn::delta_bound}
   \Delta_n \leq O_{\ol{\Lambda}}(R^{-c \delta \gamma_{n+1}}) + \Delta_{n+1},
\end{equation}
for some $c > 0$ which depends only on $\CV$.
The constant will be uniform in $n$, so that the result follows by summation.

We will first prove \eqref{hic::eqn::delta_bound} for $n=1$.  Let $\wh{g}_1$ be the harmonic extension of $\E[h(x)]$ from $\partial D_U(R^{\gamma_1})$ to $D_U(R^{\gamma_1})$.  Lemma \ref{te::lem::expectation} implies that with 
\[ A = \{ \max_{x \in \partial D_U} |h(x)| > \alpha (\log R)\},\]
we have $\p[A] = O_{\ol{\Lambda}}(R^{-100})$ provided $\alpha > 0$ is chosen large enough.  Applying Lemma \ref{te::lem::expectation} a second time along with the Cauchy-Schwarz inequality implies
\begin{equation}
\label{hic::eqn::harmonic_good_bc}
 \big|\E[ \E[ h(x) \big| h|_{\partial D_U}] \one_{A^c}] - \E[h(x)] \big| \leq (\E[ h^2(x)] \p[A])^{1/2} =  O_{\ol{\Lambda}}(R^{-10})
 \end{equation}
 for all $x \in D$ since $A$ is $\sigma(h|_{\partial D_U})$-measurable.  Theorem \ref{harm::thm::mean_harmonic} of \cite{M10} is applicable to $h|_{D_U}$ on $D_U$ conditional on $h|_{\partial D_U}$ and $A$, which combined with \eqref{hic::eqn::harmonic_good_bc} implies 
 \[ \Delta_1 \leq O_{\ol{\Lambda}}(R^{-\delta})+ |\wh{g}_1(x_1) - \wh{g}(x_1)|.\]
By the maximum principle for discrete harmonic functions, we know that there exists $\wt{x}_1 \in \partial D_U(R^{\gamma_1}) \subseteq D_U(R^{\gamma_2}) \setminus D_U(R^{\gamma_1})$ such that $|\wh{g}_1(x_1) - \wh{g}(x_1)| \leq |\wh{g}_1(\wt{x}_1) - \wh{g}(\wt{x}_1)|$.  Applying Theorem \ref{harm::thm::mean_harmonic} of \cite{M10} a second time yields 
\[ |\wh{g}_1(\wt{x}_1) - \wh{g}(\wt{x}_1)| \leq O_{\ol{\Lambda}}(R^{-\delta}) + d(\wt{x}_1) \leq O_{\ol{\Lambda}}(R^{-\delta}) + \Delta_2,\]
which gives \eqref{hic::eqn::delta_bound} for $n=1$, as desired.

We are now going to prove \eqref{hic::eqn::delta_bound} for $n \geq 2$.  Let $\wt{\gamma}_n = (1-\epsilon/10)\gamma_{n-1}$, $\gamma_n' = (1-\epsilon/3)\gamma_{n-1}$, $\wt{B}_n = B(x_n,R^{\wt{\gamma}_{n-1}}) \cap D$, and $B_n = B(x_n, R^{\gamma_{n-1}'}) \cap D_U(R^{\gamma_{n+1}})$.  Let $\wt{\partial}_n^1$ be the part of $\partial \wt{B}_n$ which is contained in $\partial D$ and $\wt{\partial}_n^2 = \partial \wt{B}_n \setminus \partial D$.  Let $h_n$ have the law of the GL model on $\wt{B}_n$ with $h_n|_{\wt{\partial}_n^1} \equiv h|_{\wt{\partial}_n^1}$, $h_n|_{\wt{\partial}_n^2} \equiv 0$, and the same conditioning as $h$ otherwise.  By decreasing $\delta > 0$ if necessary, Proposition \ref{hic::prop::hic} implies that we can couple $h,h_n$ such that $\max_{x \in B_n} \E[| h(x) - h_n(x)|] = O(R^{-\wt{\gamma}_n \delta})$.  Let $\wh{g}_n$ be the harmonic extension of $\E[h_n(x)]$ from $\partial B_n$ to $B_n$.  Since $x \in B_n$ implies that $\dist(x, \partial \wt{B}_n \cup U) \geq R^{\gamma_{n+1}}$, Lemma \ref{te::lem::expectation} and Theorem \ref{harm::thm::mean_harmonic} imply that $\max_{x \in B_n}|\E[h_n(x)] - \wh{g}_n(x)| = O_{\ol{\Lambda}}(R^{-\gamma_{n+1} \delta})$, hence
\begin{equation}
\label{hic::eqn::harm_diff_bound_n}
\max_{x \in B_n}|\E[h(x)] - \wh{g}_n(x)| = O_{\ol{\Lambda}}(R^{-\gamma_{n+1} \delta}).
\end{equation}
Therefore
\begin{equation}
\label{hic::eqn::delta_n_prebound}
 \Delta_n \leq O_{\ol{\Lambda}}(R^{-\gamma_{n+1} \delta}) + |\wh{g}(x_n) - \wh{g}_n(x_n)|.
 \end{equation}

We can divide the boundary of $B_{n}$ into the part $\partial_{n}^1$ which intersects $\partial D_U(R^{\gamma_{n+1}})$ and $\partial_{n}^2 = \partial B_{n} \setminus \partial_{n}^1$.  We claim that the harmonic measure of $\partial_n^1$ from $x_n$ in $B_n$ is $1-O(R^{-\rho_{\rm B} \delta \gamma_{n-1}})$ provided we take $\delta < \epsilon/100$.  To see this, let $x_{n,U}$ be a point in $U$ with minimal distance to $x_n$.  Note that $d(x_n,x_{n,U}) \leq R^{\gamma_{n-1}}$.  Since $U$ is connected, there exists a connected subgraph $U_n$ of $U$ contained in $B(x_n, R^{\gamma_{n-1}'}) \cap D$ which itself contains $x_{n,U}$ and has non-empty intersection with $\partial ( B(x_n, R^{\gamma_{n-1}'}) \cap D)$.  Consequently, Lemma \ref{symm_rw::lem::beurling} of \cite{M10} implies that the probability that a random walk started at $x_n$ exits $B(x_n,R^{\gamma_{n-1}'}) \cap D$ before hitting $U_n$ is at most $O((R^{\gamma_{n-1}} / R^{\gamma_{n-1}'})^{\rho_{\rm B}}) = O(R^{-\delta \rho_{\rm B} \gamma_{n-1}})$ since $\delta < \epsilon/100$.  This proves our claim.

Letting $M_n^i = \max_{x \in \partial_{n}^i} |\wh{g}(x) - \wh{g}_n(x)|$, we thus see that
\[ |\wh{g}(x_n) - \wh{g}_n(x_n)| \leq M_n^1 + O(R^{- \rho_{\rm B} \delta \gamma_{n-1}}) M_n^2.\]
Equation \eqref{hic::eqn::harm_diff_bound_n} and the definition of $\Delta_{n+1}$ implies that
\[ M_n^1 \leq \Delta_{n+1} + O_{\ol{\Lambda}}(R^{-\delta \gamma_{n+1}}),\]
hence
\[ \Delta_n \leq \Delta_{n+1} + O(R^{- \rho_{\rm B} \delta \gamma_{n-1}}) M_{n}^2 + O_{\ol{\Lambda}}(R^{-\delta \gamma_{n+1}}).\]
Lemma \ref{te::lem::expectation} implies $M_n^2 = O_{\ol{\Lambda}}(\log R^{\gamma_n})$ hence $O(R^{-\rho_{\rm B} \delta \gamma_{n-1}}) M_n^2 = O_{\ol{\Lambda}}(R^{-\rho_{\rm B}\delta \gamma_{n}})$, which gives exactly \eqref{hic::eqn::delta_bound} and proves the lemma.
\end{proof}

\subsection{Sign of the Mean Near $U$}

We will next show that the mean height is uniformly bounded in $D$, uniformly positive near $V_+$, and uniformly negative near $V_-$.

\begin{figure}
\includegraphics[width=0.5\textwidth]{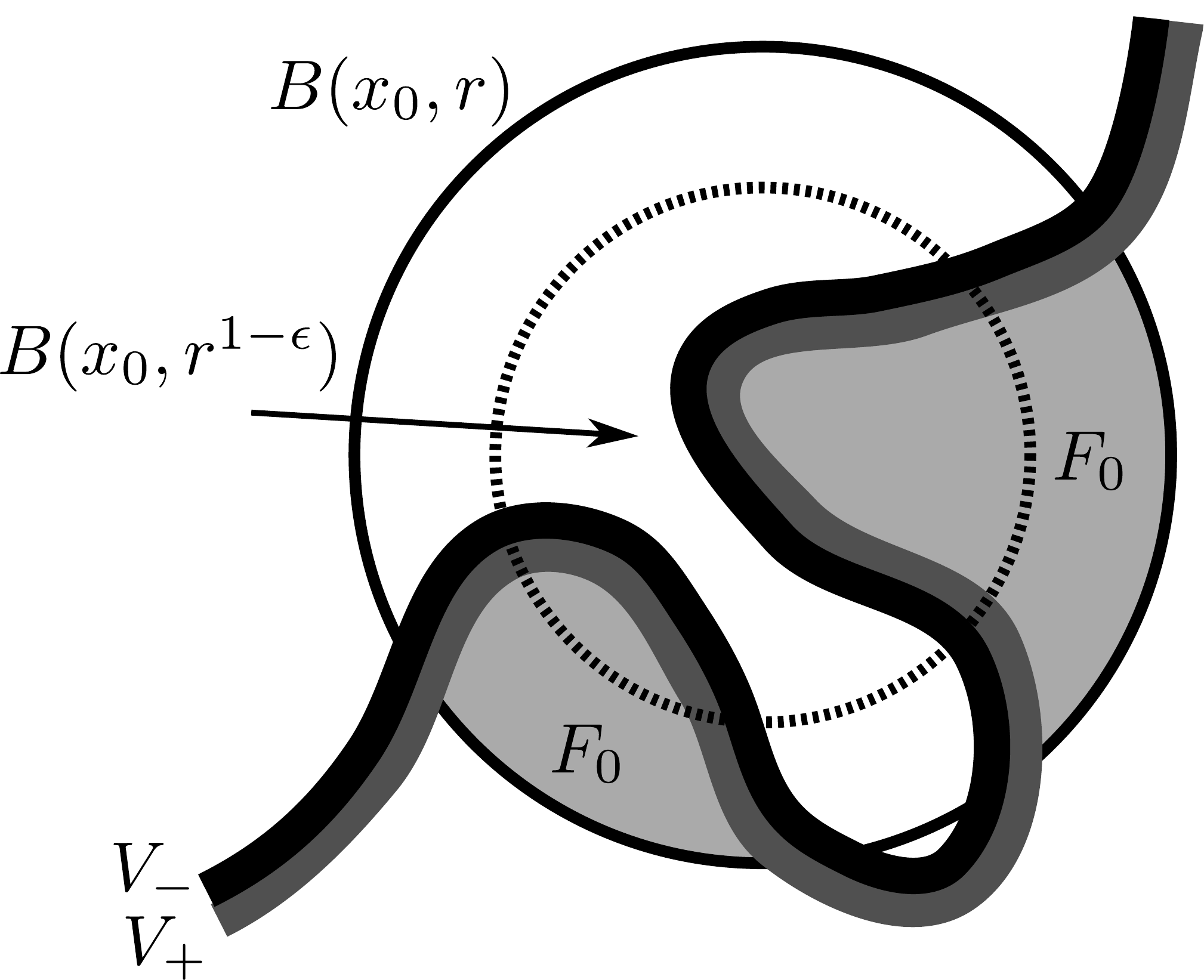}
\caption{The setup for Proposition \ref{hic::prop::exp_bounds}.  The regions shaded black and dark gray correspond to $V_-$ and $V_+$, respectively.  The light gray region is $F_0$ and the subset of $F_0$ surrounded by the disk with dashed boundary arc is $F$.}
\end{figure}

\begin{proposition}
\label{hic::prop::exp_bounds}
We assume $(\pm)$ and that $U$ is connected in addition to $(C)$ and $(\partial)$.  Suppose $r > 0$ and $x_0 \in V_+$ are such that the boundary of every connected component of $B(x_0,r) \setminus U$ does not contain vertices from both $V_-$ and $V_+$.  Let $\CC_+$ be the set of connected components of $B(x_0,r) \setminus U$ whose boundary has non-empty intersection with $V_+$ and let $F_0 = \cup_{C \in \CC_+} C$.  Fix $\epsilon > 0$ and let $F = F_0 \cap B(x_0, r^{1-\epsilon})$.  There exists $\lambda_0 = \lambda_0(\epsilon,\ol{\Lambda}) > 0$ such that
\[ \frac{1}{\lambda_0} \leq \E[h(x)]  \leq \lambda_0 \text{ for all } x \in F.\]
\end{proposition}

The easy part is the upper bound: this follows by using Lemma \ref{hic::lem::harmonic_boundary} and the maximum principle to reduce it to bounding $\E[h(x)]$ for $x$ with $\dist(x,\partial D)$ uniformly bounded, then applying Proposition \ref{hic::prop::hic}.  The lower bound is more challenging.

\begin{lemma}
\label{hic::lem::exp_upper_bound}
Suppose that we have the same assumptions as Lemma \ref{hic::lem::harmonic_boundary}.  There exists $\lambda_0 = \lambda_0(\epsilon, \ol{\Lambda}) > 0$ such that $\E[ h(x)] \leq \lambda_0$ for every $x \in D$.
\end{lemma}
\begin{proof}
Fix $s \geq 1$ sufficiently large that Lemma \ref{hic::lem::harmonic_boundary} applies.  Let $D_U = D \setminus U$.  If $\dist(x,\partial D_U) \leq s$, then Lemma \ref{te::lem::expectation} implies $\E[h(x)] = O_{\ol{\Lambda}}(1)$.  It thus suffices to prove the bound on $D_U(s)$.  Applying Lemma \ref{hic::lem::harmonic_boundary}, we see that if $\wh{g}$ denotes the harmonic extension of $\E[h(x)]$ from $\partial D_U(s)$ to $D_U(s)$, then $|\E[h(x)] - \wh{g}(x)| = O_{\ol{\Lambda}}(1)$ uniformly in $x \in D_U(s)$.  By the maximum principle for discrete harmonic functions, the maximum of $\wh{g}$ in $D_U(s)$ is attained at some point $y_0 \in \partial D_U(s)$.  Consequently,
\[ \E[ h(x)]  \leq O_{\ol{\Lambda}}(1) + |\wh{g}(x)| \leq  O_{\ol{\Lambda}}(1) + |\wh{g}(y_0)| \leq  O_{\ol{\Lambda}}(1) + |\E[h(y_0)]|.\]
 Lemma \ref{te::lem::expectation} implies that the right hand side is $O_{\ol{\Lambda}}(1)$, which proves the lemma.
\end{proof}

The proof of the lower bound will also use Lemma \ref{hic::lem::harmonic_boundary} to reduce the problem to a boundary computation: we will show that $\E[h(x)]$ is uniformly positive very near $V_+$.  This strategy is a bit more difficult to implement in this case, however, since we need to show that this uniform positivity is enough to dominate the error associated with approximating $\E[h(x)]$ by the harmonic extension of its boundary values.  We will deduce this by arguing that along, say, the positive side of the interface, points at which the height is larger than a given threshold are typically not too far from each other.  Then, we will invoke the HS representation of the mean combined with a uniform lower bound on the probability that the HS walk hits any one of these points.

\begin{figure}
     \centering
          \includegraphics[width=.4\textwidth]{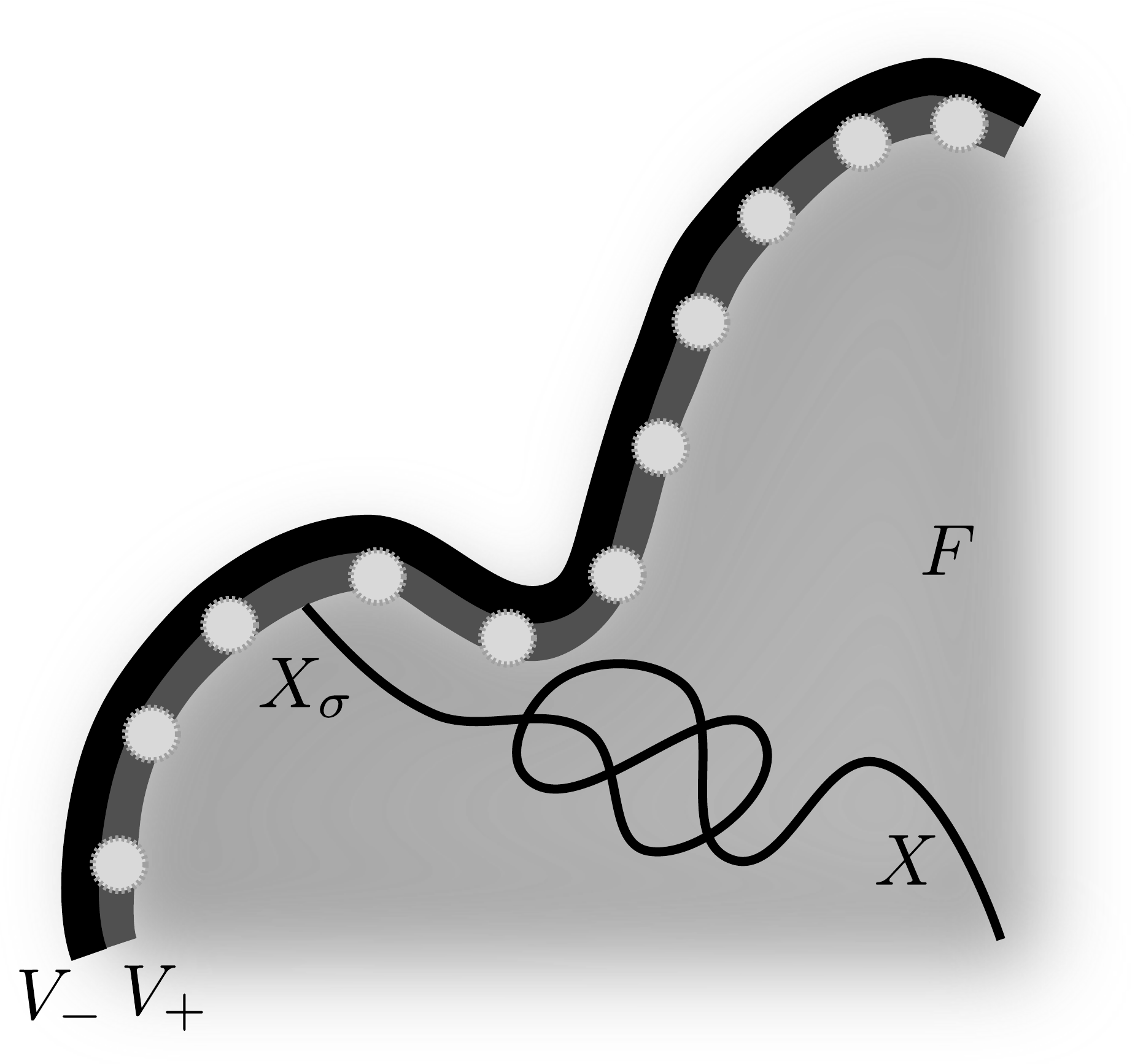}
          \caption{The idea of the proof of Lemma \ref{hic::lem::exp_lower_bound} is to show that with high probability we can find a $\sqrt{\log r}$ net $Y$ of $V_+$, indicated by the light gray disks, such that $h|_{Y} \geq r^{-a}$.  Thus the position of the HS random walk $X_\sigma$ upon first becoming adjacent to $V_+$ is within $\sqrt{\log r}$ jumps of exiting in $Y$.  Since the jump rates of $X$ are bounded, $X$ first enters $V_+$ in $Y$ with probability at least  $\rho^{\sqrt{\log r}}$, some $\rho = \rho(\CV) > 0$.}
\end{figure}

\begin{lemma}
\label{hic::lem::exp_lower_bound}
Suppose we have the same setup as in Proposition \ref{hic::prop::exp_bounds}.  For every $\epsilon > 0$ and $a > 0$ there exists $c(a,\epsilon)$ such that with $F = F_0 \cap B(x_0, r^{1-\epsilon})$ we have
\begin{equation}
\label{hic::eqn::min_decay_lem}
\E[h(x)] \geq c(a,\epsilon) r^{-a} \text{ for all } x \in F.
\end{equation}
\end{lemma}
\begin{proof}
Notationally, it will be easier for us to establish \eqref{hic::eqn::min_decay_lem} with $a$ replaced by $2a$: there exists $c = c(a,\epsilon)$ such that
\begin{equation}
\label{hic::eqn::min_decay}
\min_{y \in F} \E[ h(y)] \geq c(a,\epsilon) r^{-2 a}.
\end{equation}
Let $B = B(x_0,r)$, $V_\pm^B = V_\pm \cap B$.  For $x,y \in B(x_0,r)$ let $d_P(x,y)$ denote the length of the shortest path in $B \setminus (V_+ \cup V_-)$ which connects $x$ to $y$ and set set $d_P(x,y) = \infty$ if there is no such path.  Fix $z_0 \in F_0$ and assume that the connected component $C_0$ of $F_0$ containing $z_0$ has diameter $s > 0$ with respect to $d_P$.  Let $y_1,\ldots,y_n$ be a $\sqrt{\log s}$ net of the subset of $V_+^B$ which is adjacent to $C_0$ with respect to $d_P$.  Fix $M > 0$ and, for each $i$, let $Y_i = \{y_{i1},\ldots,y_{im_i}\}$ be an $M$ net of $B_{d_P}(y_i,\sqrt{\log s}) \cap V_+^B$.  Obviously, $\sqrt{\log s} \leq m_i \leq \log s$ for each $i$.  Fix $a > 0$ arbitrary, let $E_{ij}^a = \{ h(y_{ij}) \geq s^{-a}\}$, and $G_{ij}^a = \cap_{k \neq j} (E_{ik}^a)^c$.  By Proposition \ref{hic::prop::hic}, it follows that if $y \sim y_{ij}$ then $\E[|h(y)| \big| G_{ij}^a] - \E[|h(y)|] = O_{\ol{\Lambda}}(1)$ hence $\E[|h(y)| \big| G_{ij}^a] = O_{\ol{\Lambda}}(1)$.  As in the proof of Lemma \ref{hic::lem::min}, this in turn implies
\begin{equation}
\label{hic::eqn::approx_ind}
 \p[ (E_{ij}^a)^c \big| G_{ij}^a] \leq a_1 s^{-a}
\end{equation}
for some constant $a_1 > 0$.  Indeed, as we are able to bound the mean heights of $h(y)$ for $y \sim y_{ij}$ conditional on $G_{ij}^a$, we can use Markov's inequality to show that $h$ is uniformly bounded at the neighbors of $y_{ij}$ with uniformly positive probability.  Conditioning further on this event, the desired result is clear from the explicit form of the conditional density of $h(y_{ij})$.  With $\wt{G}_{ij}^a$ the intersection of any combination of $E_{ik}^a$ or $(E_{ik}^a)^c$ over $k \neq j$, we see that we can couple together $h|\wt{G}_{ij}^a$ and $h|G_{ij}^a$ such that $h|\wt{G}_{ij}^a \geq h|G_{ij}^a$ by Lemma \ref{gl::lem::stoch_dom}.  By \eqref{hic::eqn::approx_ind} we therefore have
\begin{equation}
 \p[ (E_{ij}^a)^c \big| \wt{G}_{ij}^a] \leq \p[ (E_{ij}^a)^c \big| G_{ij}^a] \leq  a_1 s^{-a}.
\end{equation}
Consequently,
\begin{align}
 \log \p[ \cap_{j=1}^{m_i} (E_{ij}^a)^c]
&= \log \p[ (E_{1j}^a)^c \big| \cap_{j=2}^{m_i} (E_{ij}^a)^c] + \log \p[ \cap_{j=2}^{m_i} (E_{ij}^a)^c] \notag\\
&\leq a_1 - a(\log s) + \log \p[ \cap_{j=2}^{m_i} (E_{ij}^a)^c] \label{hic::eqn::e_bound1}.
\end{align}
Using that $\sqrt{\log s} \leq m_i \leq \log s$ and iterating \eqref{hic::eqn::e_bound1}, we thus see that
\begin{align}
 \log \p[ \cap_{j=1}^{m_i} (E_{ij}^a)^c]
&\leq c_1(\log s) - a(\log s)^{3/2}
  \leq -\frac{a}{2} (\log s)^{3/2} \label{hic::eqn::e_int_bound}
\end{align}
for $s$ sufficiently large.  Therefore
\begin{equation}
 \log \p[ E] \leq -\frac{a}{4}(\log s)^{3/2} \label{hic::eqn::log_e_bound}
\end{equation}
where $E = \cup_i \cap_{j=1}^{m_i} (E_{ij}^a)^c$, again for $s > 0$ sufficiently large.  The reason for this is that the number of elements in the outer union in the definition of $E$ is clearly polynomial in $s$, so \eqref{hic::eqn::log_e_bound} follows from \eqref{hic::eqn::e_int_bound} by a union bound.  Thus 
\[ |\E[h(x) \one_{E}]| \leq (\E[ |h(x)|^2 ])^{1/2} [\p[E]]^{1/2} = O_{\ol{\Lambda}}(s^{-100}),\]
hence to prove the lemma it suffices to show that $|\E[h(x) \one_{E^c}]| \geq c s^{-a}$ for $s$ sufficiently large.  Let $B_U = B \setminus U$ and $\psi = h |_{\partial B_U}$.  By Lemma \ref{gl::lem::hs_mean_cov} of \cite{M10}, we have the HS representation for the conditional mean:
\begin{equation}
\label{hic::eqn::hs_mean}
 \E[h(x)|\psi] = \int_0^1 \E_x^{t\psi}[\psi(X_\tau)] dt,
\end{equation}
where under $\p_x^{t\psi}$, $X$ is the HS random walk on $D$ started at $x$ associated with the GL model on $B_U$ with boundary condition $t\psi$ and $\tau = \inf\{t : X_t \notin B_U\}$.  Our hypotheses imply
\begin{equation}
\label{hic::eqn::hit_vplus_bound}
\p_{z_0}[X_\tau \notin V_+^B] = O(r^{-\epsilon \rho_{\rm B}})
\end{equation}
for $\rho_{\rm B} > 0$ as in Lemma \ref{symm_rw::lem::beurling} of \cite{M10}.  Let $\sigma = \inf\{ t : \dist(X_t, V_+^B) = 1\}$.  On $E$, we know that $X_\sigma$ is at most $\sqrt{\log s}$ jumps from a site $y \in V_+^B$ such that $\psi(y) \geq s^{-a}$.  Therefore the probability that $X$ started at $X_\sigma$ exits at such $y$ is at least $\rho^{\sqrt{\log s}} \geq c_1(a)s^{-a}$, some $\rho > 0$ depending only on $\CV$ and $c_1(a)$ depending only on $a$.  Combining this with \eqref{hic::eqn::hs_mean} and \eqref{hic::eqn::hit_vplus_bound} with $s > 0$, we have
\begin{align*}
          &\E[h(x)|\psi] 
\geq  \frac{c_1(a)}{2} s^{-2a} + O(r^{-\epsilon \rho_{\rm B}}) \| \psi\|_\infty,
\end{align*}
provided we take $r$ sufficiently large.  Lemma \ref{te::lem::expectation} implies $\E[\| \psi\|_\infty] = O_{\ol{\Lambda}}(\log r)$, hence integrating both sides over $\psi$ yields \eqref{hic::eqn::min_decay} as $s = O(r^2)$.
\end{proof}

\begin{proof}[Proof of Proposition \ref{hic::prop::exp_bounds}]
Let $\wt{h}$ have the law of the GL model on $B = B(x_0,r)$ with the same conditioning as $h$ and $\wt{h}|_{\partial B} \equiv 0$.  By Proposition \ref{hic::prop::hic}, we can find a coupling of $\wt{h}$ and $h$ such that $\max_{x \in B(x_0, r^{1-\epsilon})} \E|\wt{h}(x) - h(x)| = \epsilon_1 \equiv O_{\ol{\Lambda}}(r^{-\delta})$.  Let $B_U = B \setminus U$ and let $\wh{g}$ be the harmonic extension of $\E[\wt{h}(x)]$ from $\partial B_U(s)$ to $B_U(s)$.  Lemma \ref{hic::lem::harmonic_boundary} implies that $|\E[\wt{h}(x)] - \wh{g}(x)| = \epsilon_2 \equiv O_{\ol{\Lambda}}(s^{-\delta})$.  For $x \in F$, the harmonic measure of the part of $\partial B_U(s)$ which is not in $B(x_0, r^{1-\epsilon})$ is $\epsilon_3 \equiv O(r^{-\rho_{\rm B}\epsilon})$.  Assume $s > 0$ is chosen sufficiently large so that, with $a > 0$ chosen much smaller than $\delta, \epsilon$, the uniform lower bound of $c(a,\epsilon) s^{-a}$ dominates $\epsilon_1+\epsilon_2+\epsilon_3$.  Putting everything together, increasing $\lambda_0 > 0$ from Lemma \ref{hic::lem::exp_upper_bound} if necessary implies
\begin{align*}
 \E[h(x)] &\geq \epsilon_1 + \E[\wt{h}(x)] \geq \epsilon_1+\epsilon_2 + \wh{g}(x)\\
    &\geq \epsilon_1 + \epsilon_2 + \epsilon_3+ c(a,\epsilon) s^{-a} \geq \frac{1}{\lambda_0}.
 \end{align*}
\end{proof}

\section{Independence of Interfaces}
\label{sec::ni}

We show in this section that the geometry of zero height interfaces near a particular point $x_0$ is approximately independent from the exact geometry of those which are far from $x_0$, that is:

\begin{figure}
     \centering
          \includegraphics[width=.5\textwidth]{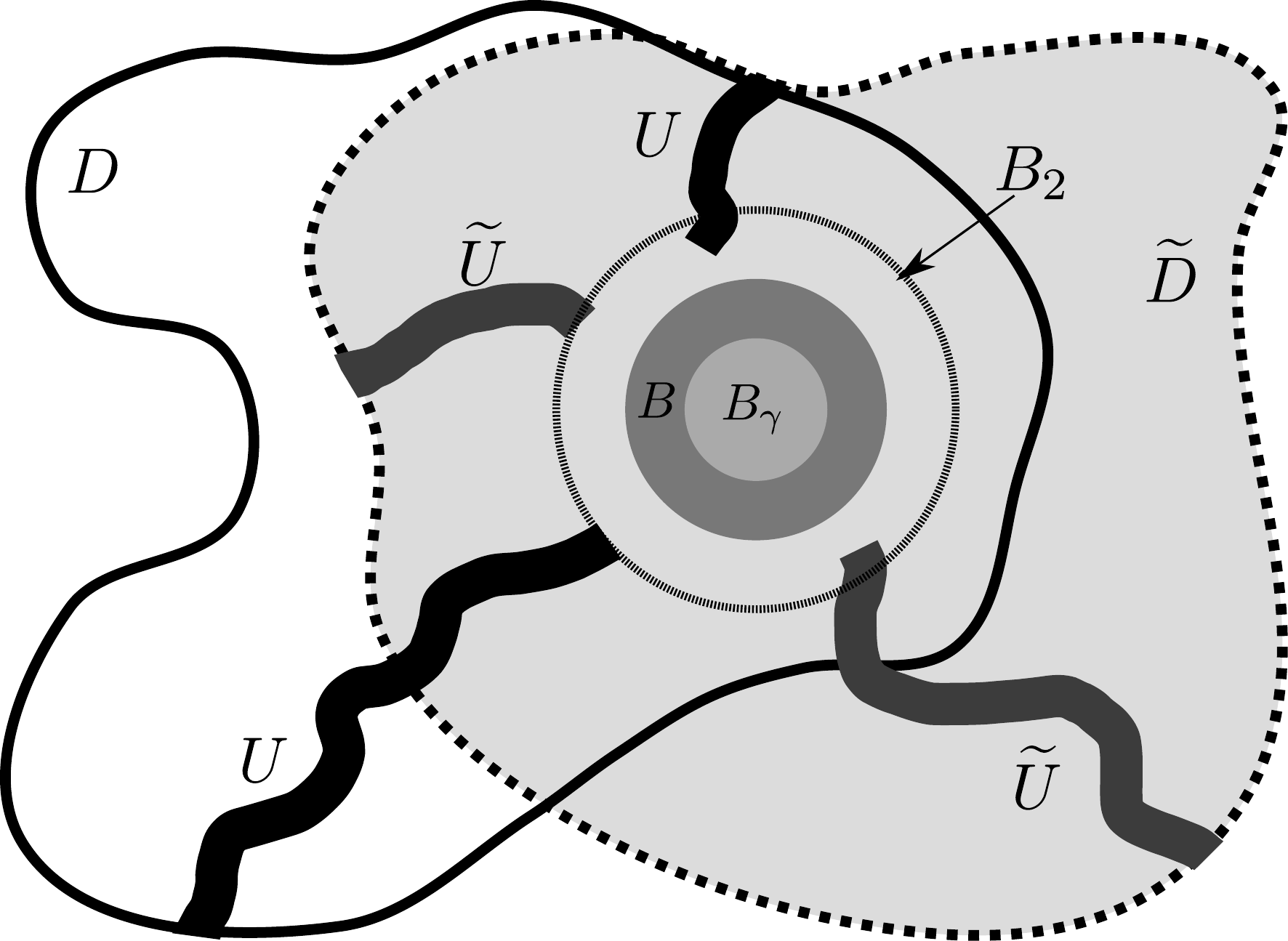}
          \caption{The setup for Proposition \ref{ni::prop::ni}.  We emphasize that $U$ consists of both the black arcs emanating from $\partial D$ along with $\partial D$ itself and likewise for $\wt{U}$.  It is important that $U$, $\wt{U}$ have non-empty intersection with $B_2$ since their presence moderates the fluctuations of the fields in $B_\gamma$.}
\end{figure}

\begin{proposition}[Independence of Interfaces]
\label{ni::prop::ni}
Suppose $D, \wt{D} \subseteq \Z^2$ are bounded, $r > 0$, and $x_0 \in D \cap \wt{D}$.  For each $\alpha > 0$, let $B_\alpha \equiv B(x_0, \alpha r)$, $B \equiv B_1$, and assume $B_3 \subseteq D \cap \wt{D}$.  Suppose $\phi \colon \partial D \to \R, \wt{\phi} \colon \partial \wt{D} \to \R$ satisfy $(\partial)$ and that $U \subseteq D, \wt{U} \subseteq \wt{D}$ correspond to systems of conditioning $(a,b)$, $(\wt{a},\wt{b})$, respectively, both of which satisfy (C), are connected, and intersect $B_2, \partial B_3$ but not $B$.  Let $\CK = \cap_{x \in D} \{ a(x) \leq h(x) \leq b(x)\}$ and $\wt{\CK} = \cap_{x \in \wt{D}}\{\wt{a}(x) \leq \wt{h}(x) \leq \wt{b}(x)\}$.  Fix $0 < \gamma < 1$, suppose $U_\gamma \subseteq B_\gamma$ corresponds to a system of conditioning $(a_\gamma,b_\gamma)$ satisfying (C), is connected, and has non-empty intersection with $B_{\gamma/2}$ and $\partial B_\gamma$, and let $\CK_\gamma = \cap_{x \in B_\gamma}\{ a_\gamma(x) \leq h(x) \leq b_\gamma(x)\}$ and $\wt{\CK}_\gamma = \cap_{x \in B_\gamma}\{ a_\gamma(x) \leq \wt{h}(x) \leq b_\gamma(x)\}$.  There exists $c = c(\gamma, \ol{\Lambda})$ such that
\[ \frac{1}{c} \p_D^\phi[ \CK_\gamma|\CK] \leq \p_{\wt{D}}^{\wt{\phi}}[\wt{\CK}_\gamma | \wt{\CK}] \leq c \p_D^{\phi}[ \CK_\gamma | \CK].\]
\end{proposition}

We will now give an overview of the main steps.  We begin by fixing $0 < \alpha < \alpha'$ small and then couple $h|\CK,\wt{h}|\wt{\CK}$ in $H = B \setminus B_\gamma$ using Theorem \ref{harm::thm::coupling} of \cite{M10} so that $\ol{h} = h|\CK-\wt{h}|\wt{\CK}$ is with high probability harmonic in $H(r^{1-\epsilon})$, $\epsilon > 0$ small.  Recall that $H(r) = \{ x \in H : \dist(x, \partial H) \geq r\}$.  We show in Lemma \ref{ni::lem::bounded_coupling} for $H^\alpha = H(\alpha r)$ that $\E[ \max_{x \in H^\alpha} |\ol{h}(x)|^p] = O_{\alpha,\ol{\Lambda},p}(1)$.  This allows us to conclude for $H^{\alpha'} = H(\alpha' r)$ that $\max_{b \in (H^{\alpha'})^*} |\nabla \ol{h}(b)| \leq C r^{-1}$ with high probability when $\ol{h}$ is harmonic provided $C = C(\alpha,\alpha',\ol{\Lambda}) > 0$ is taken sufficiently large.  Fix $\beta > 0$ so that $\partial B_\beta \subseteq H^{\alpha'}$ and let $(\xi,\wt{\xi}) = (h,\wt{h})|_{\partial B_\beta \times \partial B_\beta}$.  We next study the effect of changing the boundary conditions from $\xi$ to $\wt{\xi}$ on $\partial B_\beta$ has on the probability that $\CK_\gamma$ occurs.  To this end, we let $\varphi \colon B_\beta \to \R$ solve the boundary value problem
\[ \varphi|_{\partial B_\beta} = \ol{\xi},\ \ \varphi|_{B_\gamma} \equiv 0,\ \  (\Delta \varphi)|_{B_\beta \setminus B_\gamma} \equiv 0,\]
where $\ol{\xi} = \xi - \wt{\xi}$, then control the Radon-Nikodym derivative of $\p_{B_\beta}^\xi$ with respect to $\Q_{B_\beta}^{\wt{\xi},\varphi}$ integrated over $\CK_\gamma$, where we recall that $\Q_{B_\beta}^{\wt{\xi},\varphi}$ is the law of $h^{\wt{\xi}} - \varphi$ and $h^{\wt{\xi}} \sim \p_{B_{\beta}}^{\wt{\xi}}$.  Repeated applications of Jensen's inequality (Lemma \ref{ni::lem::prob_ratio_bound}) shows that this quantity is bounded from below by:
\[  \E^{\wt{\xi}} \left[ \exp\left( \sum_{b \in B_\beta^*} \E_{\CK_\gamma}^{\xi,\wt{\xi}}\bigg[ c(b) \nabla \ol{h}^{\xi,\wt{\xi}}(b) \nabla \varphi(b) + O(\CE(b)) \bigg] \right) \one_{\CK_\gamma} \right],\]
where $\E_{\CK_\gamma}^{\xi,\wt{\xi}}$ is the stationary coupling of $\p_{B_\beta}^\xi$ and $\p_{B_\beta}^{\wt{\xi}}[ \cdot | \CK_\gamma]$.  We will then show (Lemma \ref{ni::lem::rn_bound}) that the expectation in the exponential is bounded on $\CA_C = \{ \max_{x \in \partial B_\beta} |\ol{\xi}(x)| \leq C,\ \max_{b \in \partial B_\beta^*} |\nabla \ol{\xi}(b)| \leq C/r\}$, though the estimate deteriorates as we increase $C$.  Integrating the result over $(\xi,\wt{\xi})$ leaves us with an inequality of the form
\[ \p_D^{\phi}[ \wt{\CK}_\gamma | \CK] \geq c_1\E[ \p_{B_\beta}^{\wt{\xi}}[ \wt{\CK}_\gamma] \one_{\CA_C} | \CK, \wt{\CK}].\]
We end the proof (Lemma \ref{ni::lem::density_ratio_bound}) by showing that there exists another event $\CB$, whose probability is uniformly bounded from $0$, such that we have
\[ \p_{B_\beta}^{\wt{\xi}}[\wt{\CK}_\gamma] \one_{\CB} \geq c_2 \p_{\wt{D}}^{\wt{\phi}}[ \wt{\CK}_\gamma | \wt{\CK}] \one_{\CB}.\]
This completes the proof since we can make $\p[\CA_C|\CK, \wt{\CK}]$ as close to $1$ as we like by  choosing $C$ large enough, hence we can ensure that $\p[ \CA_C \cap \CB | \CK, \wt{\CK}]$ is uniformly bounded from zero.

\begin{lemma}[Bounded Coupling]
\label{ni::lem::bounded_coupling}
Assume the hypotheses Proposition \ref{ni::prop::ni} except we replace the restrictions on the geometry of $U, \wt{U}$ with the following.  Suppose that $U \setminus B, \wt{U} \setminus B$ are connected and intersect $B_2, \partial B_3$ and $U, \wt{U}$ do not intersect $H = B \setminus B_\gamma$, and $U \cap B_\gamma, \wt{U} \cap B_\gamma$ are either empty or connected and have non-empty intersection with $B_{\gamma/2}$ and $\partial B_{\gamma}$.  Fix $\epsilon > 0$ so that Theorem \ref{harm::thm::coupling} of \cite{M10} holds.  Consider the coupling of $(h|\CK, \wt{h} | \wt{\CK})$ given by:
\begin{enumerate}
  \item Sampling $(\zeta,\wt{\zeta}) \equiv (h|\CK,\wt{h}|\wt{\CK})|_{\partial H \times \partial H}$ according to any given coupling,
  \item Conditional on $\{\| \zeta\|_\infty + \| \wt{\zeta} \|_\infty \leq (\log r)^2\}$, resample $(h|\CK,\wt{h}|\wt{\CK})$ in $H_\epsilon = H(r^{1-\epsilon})$ according to the coupling of Theorem \ref{harm::thm::coupling} of \cite{M10} \label{ni::item::boundary_good}.
\end{enumerate}
Then $\E[\max_{x \in H^\alpha} |\ol{h}(x)|^p] = O_{\alpha,\ol{\Lambda},p}(1)$ for every $p \geq 1$ and $\alpha > 0$ where $H^\alpha = H(\alpha r)$.
\end{lemma}
\begin{proof}
Our hypotheses on the geometry of $U \setminus B$ imply that Lemma \ref{te::lem::expectation} applies for $h|\CK$ on all of $B$.  This similarly holds for $\wt{h}|\wt{\CK}$ on $B$, so we consequently have $\p[\CE] = O_{\ol{\Lambda}}(r^{-100})$ for $\CE = \{\| \zeta\|_\infty + \| \wt{\zeta}\|_\infty > (\log r)^2\}$.  From Theorem \ref{harm::thm::coupling} of \cite{M10}, we know that on the event $\CE$ the harmonic coupling of $h|\CK ,\wt{h}| \wt{\CK}$ in $H_\epsilon$ is such that with $\wh{g}$ the harmonic extension of $\ol{h} = h|\CK-\wt{h}|\wt{\CK}$ from $\partial H_\epsilon$ to $H_\epsilon$ and $\CH = \{\ol{h} = \wh{g} \text{ in } H_\epsilon\}$ we have $\p[\CH^c | \CE] = O_{\ol{\Lambda}}(r^{-\delta})$, some $\delta > 0.$  It suffices to prove
\[ \E[\max_{x \in H^\alpha} |\ol{h}(x)|^p | \CH,\CE] = O_{\alpha,\ol{\Lambda},p}(1).\]
Indeed, by Lemma \ref{te::lem::expectation} we know that $\E[\max_{x \in H^\alpha} |h(x)|^{2p} | \CK] = O( (\log r)^{2p})$ and likewise for $\wt{h}$.  Hence, by the Cauchy-Schwarz inequality, 
\[ \E[\max_{x \in H^\alpha} |\ol{h}(x)|^p (\one_{\CH^c} + \one_{\CE^c})]\]
is negligible in comparison to the bound we seek to establish.

Let $g,\wt{g}$ be the harmonic extensions of $h,\wt{h}$ from $\partial H_\epsilon$ to $H_\epsilon$.  Then it in turn suffices to show that $\E[ \max_{x \in H^\alpha} |g(x)|^p | \CK] = O_{\alpha, \ol{\Lambda},p}(1)$ and likewise with $\wt{g}$ in place of $g$.  Let $W = V \cup V_-$ and $D_{W} = D \setminus W$.  Let $\partial_1,\partial_2$ be the parts of $\partial D_W$ which do and do not intersect $\partial D$, respectively.  Let $h^{D_W}$ have the law of the GL model on $D_W$ with $h^{D_W}|_{\partial_1} \equiv h|_{\partial_1}$, $h|_{\partial_2} = \ol{\Lambda}$, and the same conditioning as $h|\CK$ otherwise.  Lemma \ref{gl::lem::stoch_dom} implies that we can find a coupling of $h^{D_W}, h|\CK$ such that $h^{D_W} \geq h|\CK$ almost surely.  Hence letting $g^W$ be the harmonic extension of $h^{D_W}$ from $\partial H_\epsilon$ to $H_\epsilon$, we have that $g^W \geq g|\CK$ almost surely.  Of course, we can do exactly the same thing except removing $W' = V \cup V_+$ rather than $W$ and setting the corresponding boundary condition to $-\ol{\Lambda}$.  This leaves us with the lower bound $h|\CK \geq h^{D_{W'}}$ and, with $g^{W'}$ the corresponding harmonic function, we have $g|\CK \geq g^{W'}$.  Thus since $|g|^p|\CK \leq (2^p)(|g^W|^p + |g^{W'}|^p)$, it suffices to show that 
\begin{equation}
\label{ni::eqn::bounded_reduction}
\E[ \max_{x \in H^\alpha} |g^W(x)|^p] = O_{\alpha, \ol{\Lambda},p}(1),
\end{equation} 
and likewise for $g^{W'}$.

Applying the maximum principle to the harmonic function $\E[g^W(x)]$ along with Lemma \ref{hic::lem::exp_upper_bound} implies $\max_{x \in H_\epsilon} |\E[g^W(x)]| = O_{\ol{\Lambda}}(1)$.  Hence to prove \eqref{ni::eqn::bounded_reduction}, we need to prove
\begin{equation}
\label{ni::eqn::bounded_reduction_centered}
\E[ \max_{x \in H^\alpha} |g^W(x) - \E[g^W(x)]|^p] = O_{\alpha,\ol{\Lambda},p}(1).
\end{equation}
Fix $p \geq 1$ and let $g_p^W(x)$ be the harmonic extension of $|g^W(x) - \E[g^W(x)]|^p$ from $\partial H_\epsilon$ to $H_\epsilon$.  For $x \in H_\epsilon$ and $y \in \partial H_\epsilon$, let $p(x,y)$ be the probability that a simple random walk initialized at $x$ first exits $H_\epsilon$ at $y$.  Since
\[ |g^W(x) - \E[g^W(x)]|^p = \left|\sum_{y \in \partial H_\epsilon} p(x,y)(g^W(y) - \E[g^W(y)]) \right|^p,\]
Jensen's inequality implies $|g^W(x) - \E[g^W(x)]|^p \leq g_p^W(x)$.  Fix $y_0 \in H^\alpha$.  By the Harnack inequality, there exists $C_1 = C_1(\alpha)$ such that 
\[ \max_{x \in H^\alpha} |g_p^W(x)| \leq C_1 g_p^W(y_0)\]
since $g_p^W \geq 0$.  Hence we need to bound $\E[ g_p^W(y_0)]$ which, by the maximum principle, is bounded by $\max_{x \in \partial H^\alpha} \E[ |g^W(x) - \E[g^W(x)]|^p]$.  

We can bound this moment using the Brascamp-Lieb inequality (Lemma \ref{bl::lem::bl_inequalities}).  To this end, let $G_{D_W}(x,y)$ be the Green's function for simple random walk on $D_W$.  For $x \in H_\epsilon$, note that
\begin{align*}
    \sum_{y \in \partial H_\epsilon} G_{D_W}(x,y) = O(r).
\end{align*}
The reason for this is that the expected amount of time a random walk started at $x$ spends in $\partial H_\epsilon$ before exiting $B_3$ is $O(r)$ and the expected number of times a random walk reenters $B$ after exiting $B_3$ before hitting $U$ hence $W$ is stochastically dominated by a geometric random variable with parameter $\rho_0 > 0$ by Lemma \ref{symm_rw::lem::beurling} of \cite{M10}.  We also have that $p(x,z) = O_{\alpha}(r^{-1})$ uniformly in $x \in H_{\alpha}$ and $z \in \partial H_\epsilon$.  Hence
\begin{align}
\label{ni::eqn::var_bound}
    \sum_{z_1,z_2 \in \partial H_\epsilon} p(x,z_1) p(x,z_2) G_{D_W}(z_1,z_2) = O_{\alpha}(1).
\end{align}
Combining with the Brascamp-Lieb inequality (Lemma \ref{bl::lem::bl_inequalities}) implies the result since the expression on the left side is exactly the variance of the DGFF on $D_W$.
\end{proof}

\begin{lemma}
\label{ni::lem::prob_ratio_bound}
Suppose $F \subseteq \Z^2$ is bounded, $E \subseteq F$, $\CA \in \CF_E = \sigma(h(x) : x \in E)$, $\psi, \wt{\psi} \colon \partial F \to \R$, and $\varphi \colon F \to \R$ satisfies $\varphi|_E \equiv 0$, $\varphi|_{\partial F} = \psi - \wt{\psi}$.  Then we have that
\begin{align*}
   \p_{F}^\psi[\CA]
\geq \E^{\wt{\psi}} \left[ \exp\left( \sum_{b \in F^*} \E_{\CA}^{\psi,\wt{\psi}}\bigg[ c(b) \nabla \ol{h}(b) \nabla \varphi(b) + O(\CE(b)) \bigg] \right) \one_{\CA} \right]
\end{align*}
where $\E_\CA^{\psi,\wt{\psi}}$ is the expectation under any coupling of $\p_F^{\psi}$ and $\p_F^{\wt{\psi}}[\cdot | \CA]$, 
\[ c(b) = \CV''(\nabla h^{\wt{\psi}}(b)) \text{ and } \CE(b) = |\nabla \ol{h}(b)|^2|\nabla \varphi(b)| + (\nabla \varphi(b))^2.\]
\end{lemma}
\begin{proof}
Let $\CZ, \wt{\CZ}$ be the normalization constants that appear in the densities of $\p_F^\psi, \p_F^{\wt{\psi}}$ with respect to Lebesgue measure.  Recall that $\Q_F^{\wt{\psi},-\varphi}$ denotes the law of $(h^{\wt{\psi}} + \varphi)$ for $h^{\wt{\psi}} \sim \p_F^{\wt{\psi}}$ and $\E_\Q^{\wt{\psi},-\varphi}$ is the corresponding expectation.  Note that the normalization constant of $\Q_F^{\wt{\psi},-\varphi}$ is also $\wt{\CZ}$.  We compute,
\begin{align}
    \p_{F}^\psi[\CA]
  =& \frac{\wt{\CZ}}{\CZ} \E_{\Q}^{\wt{\psi},-\varphi}\left[ \exp\left(\sum_{b \in F^*} [ \CV(\nabla (h - \varphi) \vee \wt{\psi}(b)) - \CV( \nabla h \vee \psi(b))] \right) \one_{\CA} \right] \notag\\
 =&\frac{\wt{\CZ}}{\CZ} \E^{\wt{\psi}}\left[ \exp\left(\sum_{b \in F^*} [ \CV(\nabla h(b)) - \CV( \nabla (h +\varphi)(b))] \right) \one_{\CA} \right] \label{ni::eqn::rn}
\end{align}
Since $\varphi \equiv 0$ on $E$, the part of the summation over $b \in E^*$ is identically zero.  Let $A = F \setminus E$.  By Jensen's inequality, the expression in \eqref{ni::eqn::rn} is bounded from below by
\begin{align}
 \frac{\wt{\CZ}}{\CZ} \E^{\wt{\psi}}\left[ \exp\left(\sum_{b \in A^*} \E^{\wt{\psi}}[ \CV(\nabla h(b)) - \CV( \nabla (h +\varphi)(b)) \big| \CA ] \right) \one_{\CA} \right].
\end{align}
Applying a first order Taylor expansion to $\CV$ about $\nabla h(b)$ and using that $\CV''$ is uniformly bounded, we can rewrite our formula for $\p_F^\psi[\CA]$ as
\begin{align}
 \frac{\wt{\CZ}}{\CZ} \E^{\wt{\psi}}\left[ \exp\left(\sum_{b \in A^*} \bigg[ \E^{\wt{\psi}}[ -\CV'(\nabla h(b)) | \CA] \nabla \varphi(b) + O( (\nabla \varphi(b))^2) \bigg]  \right) \one_{\CA} \right] \label{ni::eqn::rn_lb}.
\end{align}
Applying exactly the same procedure but with $\p_F^{\wt{\psi}}, \Q_F^{\wt{\psi},-\varphi}$ replaced by $\p_F^{\psi}, \Q_F^{\psi,\varphi}$, respectively, and $\CA$ by the whole sample space, we also have
 \begin{align}
  \frac{\wt{\CZ}}{\CZ} \geq \exp\left( \sum_{b \in A^*} \bigg[ \E^\psi[\CV'(\nabla h(b))] \nabla \varphi(b) + O( (\nabla \varphi(b))^2) \bigg] \right). \label{ni::eqn::constant_lb}
 \end{align}
Combining \eqref{ni::eqn::rn_lb} and \eqref{ni::eqn::constant_lb} with \eqref{ni::eqn::rn} yields that $\p_F^\psi[\CA]$ is bounded from below by
\begin{align*}
 \E^{\wt{\psi}} \left[ \exp\left( \sum_{b \in A^*} \bigg[ \big(\E^{\psi}[\CV'(\nabla h(b))] - \E^{\wt{\psi}}[\CV'(\nabla h(b)) | \CA] \big)\nabla \varphi(b) + O((\nabla \varphi(b))^2)\bigg] \right) \one_{\CA}  \right].
\end{align*}
Fixing a coupling $(h^\psi, h^{\wt{\psi}})$ of $\p_F^\psi$, $\p_F^{\wt{\psi}}[ \cdot | \CA]$ and setting $\ol{h} = h^\psi - h^{\wt{\psi}}$, with $\E_\CA^{\psi,\wt{\psi}}$ denoting the corresponding expectation, another application of Taylor's formula implies
\[  (\E^{\psi}[\CV'(\nabla h(b))] - \E^{\wt{\psi}}[\CV'(\nabla h(b)) | \CA]) \nabla \varphi(b) = \E_{\CA}^{\psi,\wt{\psi}}[ c(b) \nabla \ol{h}(b) \nabla \varphi(b) + O(\CE(b))],\]
which, when combined with the previous expression, proves the lemma.
\end{proof}

We say that $F \subseteq \Z^2$ with $\diam(F) < \infty$ is $C$-stochastically regular if
\[ \p_x[ |X_\tau - x| \geq s] \leq \frac{Cs}{\diam(F)}\]
for every $x \in F$ with $\dist(x, \partial F) = 1$ where $X$ is a simple random walk and $\tau$ is its time of first exit from $F$.  We also define the norm
\[ \| \psi \|_{F}^{\nabla} = \max_{x \in \partial F} |\psi(x)| + \diam(F) \left(\max_{\stackrel{x,y \in \partial F}{x \neq y}} \frac{|\psi(x) - \psi(y)|}{|x-y|} \right)\]
on the space of functions $\{ \psi \colon \partial F \to \R\}$.

\begin{lemma}
\label{ni::lem::rn_bound}
Suppose that $F \subseteq \Z^2$ with $r = \diam(F) < \infty$ is $C$-stochastically regular.  Assume that $E \subseteq \Z^2$, $E_\alpha = \cup_{x \in E} B(x,\alpha r)$, $E_{2C^{-1}} \subseteq F$, $A=F \setminus E'$ with $E' = E_{C^{-1}}$ is also $C$-stochastically regular and, for each $k,\delta > 0$, the number of balls of radius $r^{\delta}$ required to cover $A(r^{k\delta}, r^{(k+1)\delta})$ is $O(r^{1-(k-1)\delta})$.  Suppose that $\wt{U}_E \subseteq E$ corresponds to a system of conditioning $(\wt{a}_E,\wt{b}_E)$ satisfying $(C)$ and let $\wt{\CK}_E = \cap_{x \in E} \{ \wt{a}_E(x) \leq h^{\wt{\psi}}(x) \leq \wt{b}_E(x) \}$.  Let $\psi, \wt{\psi} \colon \partial F \to \R$ satisfy $\| \psi - \wt{\psi}\|_{F}^\nabla \leq C$.  Suppose $\varphi \colon F \to \R$ is harmonic off of $E'$, $\varphi|_{E'} \equiv 0$, and $\| \varphi \|_{F}^\nabla \leq C$.  Let $(h^\psi, h^{\wt{\psi}}|\wt{\CK}_E)$ denote the stationary coupling of $\p_F^{\psi}, \p_F^{\wt{\psi}}[\cdot|\wt{\CK}_E]$ and $\E_{\wt{\CK}_E}^{\psi,\wt{\psi}}$ the corresponding expectation.  Using the notation $c(b)$ and $\CE(b)$ from the previous lemma, we have that
\[ \E_{\wt{\CK}_E}^{\psi,\wt{\psi}} \left[ \sum_{b \in F^*} [ c(b) \nabla \ol{h}(b) \nabla \varphi(b) + O(\CE(b)) ]\right]  = O_C(1).\]
\end{lemma}
\begin{proof}
With $\ol{\psi} = \psi - \wt{\psi}$, let 
\[ \alpha_j = \max \{| \ol{\psi}(z_1) - \ol{\psi}(z_2)| : z_1, z_2 \in \partial F, |z_1-z_2| \leq j\}\]
and note that $|\alpha_{j+1} - \alpha_j| \leq C r^{-1}$ since $\|\ol{\psi}\|_F^\nabla \leq C$.  Fix $b = (x,y) \in \partial F^*$.  Letting $X$ be the random walk of Subsection \ref{subsec::rw_difference}, $\tau$ its time of first exit from $A$, $p_j = \p_y[ |X_\tau-x| \geq j]$, $J= r/C$, and $M = \max_{x \in F}\big( |h^\psi(x)| + |(h^{\wt{\psi}} |\wt{\CK}_E)(x)| \big)$ an application of summation by parts implies
\begin{equation}
\label{ni::eqn::sbp}
 |\nabla \ol{h}(b)| \leq \sum_{j=0}^{J-1} (p_{j+1} - p_j) \alpha_j + p_J M \leq \sum_{j=0}^{J-1} |\alpha_{j}-\alpha_{j-1}| p_j + 2p_J M.
\end{equation}
Lemma \ref{symm_rw::lem::beurling} of \cite{M10} implies $p_j = O(j^{-\rho_{\rm B}})$ for $\rho_{\rm B} = \rho_{\rm B}(\CV) \in (0,1)$, hence the right side of \eqref{ni::eqn::sbp} is bounded by
\[  \sum_{0=1}^{J-1} O_C( j^{-\rho_{\rm B}} r^{-1}) + O_C(r^{-\rho_{\rm B}}M ) = O_C(r^{-\rho_{\rm B}}(1+M)).\]
Lemma \ref{te::lem::expectation} implies that $\E_{\wt{\CK}_E}^{\psi,\wt{\psi}}[M^p] = O_{\ol{\Lambda}}( (\log r)^{p})$.  The Nash continuity estimate (Lemma \ref{symm_rw::lem::nash_continuity_bounded} of \cite{M10}) implies that $\nabla h(b) = O_{C}( M r^{-\rho_{\rm NC}})$ uniformly in $b \in \partial (E')^*$.  Consequently, with $\rho = \rho_{\xi_{\rm NC}} \wedge \rho_{\rm B}$, the energy inequality \eqref{gl::eqn::ee_limit} along with Cauchy-Schwarz implies
\begin{equation}
\label{ni::eqn::ee_bound}
 \sum_{b \in A^*} \E_{\wt{\CK}_E}^{\psi,\wt{\psi}}[ |\nabla \ol{h}(b)|^2] 
   \leq c_1 \sum_{b \in \partial A^*} \E_{\wt{\CK}_E}^{\psi,\wt{\psi}}[ | \nabla \ol{h}(b)| |\ol{h}(x_b)|] = O_{C}(r^{1+\epsilon-\rho}).
\end{equation}
The hypotheses of the lemma imply $|\nabla \varphi(b)| = O_{C}(r^{-1})$ uniformly in $b \in F^*$, hence
\[ \sum_{b \in F^*} \E_{\wt{\CK}_E}^{\psi,\wt{\psi}}[ \CE(b)] = O_{C}(1).\]
Thus to prove the lemma we need to control
\[ \E_{\wt{\CK}_E}^{\psi,\wt{\psi}}\left[ \sum_{b \in A^*} c(b) \nabla \ol{h}(b) \nabla \varphi(b) \right].\]
We will first argue that the contribution coming from the terms near $\partial H$ is negligible.  Using \eqref{ni::eqn::ee_bound} along with Cauchy-Schwarz and that $|A^* \setminus A^*(r^{\rho/2})| = O(r^{1+\rho/2})$, we have
\begin{align}
 &\E_{\wt{\CK}_E}^{\psi,\wt{\psi}}\left[ \sum_{b \in A^* \setminus A^*(r^{\rho/2})} |\nabla \ol{h}(b) \nabla \varphi(b)| \right] \notag\\
=& \sqrt{O_C(r^{1+\epsilon-\rho}) O_C(r^{-1+\rho/2})} = O_{C}(r^{\epsilon/2-\rho/4}) \label{ni::eqn::boundary_error}.
\end{align}
We now handle the interior term.  Let $(h_t^\psi,(h^{\wt{\psi}}|\wt{\CK}_E)_t)$ denote the dynamics of the stationary coupling.  Fixing $\delta > 0$, by hypothesis each of the annuli $A(r^{k\delta},r^{(k+1)\delta})$ can be covered by $O(r^{1-(k-1)\delta})$ balls of radius $r^{k\delta}$.  On such a ball $Q$, Theorem \ref{cd::thm::cd} of \cite{M10} implies that
\begin{align}
&\E_{\wt{\CK}_E}^{\psi,\wt{\psi}} \left[ \sum_{b \in Q^*} \CV''(\nabla h^{\wt{\psi}}(b))\nabla \ol{h}(b) \nabla \varphi(b) \right] \notag\\
=& \E_{\wt{\CK}_E}^{\psi,\wt{\psi}} \left[\sum_{b \in Q^*} c_{\CV} \nabla \ol{h}(b) \nabla \varphi(b) \right] + O_{C}(r^{\epsilon+k\delta(1-\rho_{\rm CD})-1}). \label{ni::eqn::interior_error}
\end{align}
Thus summing over a covering of $A(r^{k\delta},r^{(k+1)\delta})$ by such balls yields an error of $O_{C}(r^{\epsilon+\delta-k \delta \rho_{\rm CD}})$.  The exponent is negative for the relevant values of $k$ since boundary term includes those annuli of with $k \delta < \rho/2$.  That is, we may assume $k \delta \geq \rho/2$ and, since we are free to choose $\epsilon, \delta > 0$ as small as we like, we also assume that $\rho > \rho_{\rm CD}^{-1} 10^{10}(\epsilon+\delta)$.  Combining \eqref{ni::eqn::boundary_error} with \eqref{ni::eqn::interior_error} implies that there exists non-random $c_\CV,\ol{\rho} > 0$ depending only on $\CV$ such that
\begin{align*}
  \E_{\wt{\CK}_E}^{\psi,\wt{\psi}}\left[ \sum_{b \in A^*} \CV''(\nabla h^{\wt{\psi}}(b)) \nabla \ol{h}(b) \nabla \varphi(b) \right]
=  \E_{\wt{\CK}_E}^{\psi,\wt{\psi}}\left[ \sum_{b \in A^*} c_\CV \nabla \ol{h}(b) \nabla \varphi(b) \right] + O_C(r^{-\ol{\rho}}).
\end{align*}
Summing by parts and using the harmonicity of $\varphi$, we see that the expectation on the right hand side is bounded from above by
\[ c_{\CV} \E_{\wt{\CK}_E}^{\psi,\wt{\psi}} \left[ \sum_{b \in \partial A^*} |\nabla \varphi(b)|| \ol{h}_0(x_b)| \right] = O_C(1).\]
\end{proof}

\begin{lemma}
\label{ni::lem::density_ratio_bound}
Suppose that we have the same setup as Lemma \ref{ni::lem::bounded_coupling} and fix $\beta \in (\gamma,1)$  Let $f$ and $g$ be the densities of $\xi = (h|\CK)|_{\partial B_\beta}$ and $\wt{\xi} =  (\wt{h} | \wt{\CK})|_{\partial B_\beta}$ with respect to Lebesgue measure on $\R^{|\partial B_\beta|}$, respectively.  There exists $\delta_i = \delta_i(\beta, \gamma, \ol{\Lambda}) > 0$ such that
\[ \p_{D}^{\phi}\left[ \frac{g(\xi)}{f(\xi)} \geq \delta_1 \bigg| \CK \right] \geq \delta_2.\]
\end{lemma}
\begin{proof}
We have the trivial bound
\[ \int_{\R^{|\partial B_\beta|}} \left| \frac{f(z)}{g(z)} - 1 \right| g(z) dz \leq 2.\]
Hence applying Markov's inequality for $\CB^c$ where
\[ \CB = \left\{ \xi : \frac{f(\xi)}{g(\xi)} \leq 100\right\} = 
              \left\{ \xi : \frac{g(\xi)}{f(\xi)} \geq \frac{1}{100}\right\},\]  
we have that $\p_{\wt{D}}^{\wt{\phi}}[\CB| \wt{\CK}] \geq \frac{49}{50}$.  Our goal now is to convert this into a lower bound on $\p_D^{\phi}[\CB | \CK]$.

Let $\beta = (1+\gamma)/2$ and assume that $0 < \alpha < \alpha'$ are chosen sufficiently small so that $\partial B_\beta \subseteq H^{\alpha'}$.  Assume that $h,\wt{h}$ are coupled together in $H_\epsilon$ as in the setup of Lemma \ref{ni::lem::bounded_coupling}.  Letting $(\zeta_{\alpha'},\wt{\zeta}_{\alpha'}) = (h, \wt{h})|_{\partial H^{\alpha'} \times \partial H^{\alpha'}}$, Lemma \ref{ni::lem::bounded_coupling} implies that with $\CA_C = \{ \| \ol{\zeta}_{\alpha'}\|_{H^{\alpha'}}^{\nabla} \leq C\}$, $\ol{\zeta}_{\alpha'} = \zeta_{\alpha'} - \wt{\zeta}_{\alpha'}$,
we can make $\p[\CA_C]$ as close to $1$ as we like by choosing $C,r$ sufficiently large.  Let $\varphi \colon H^{\alpha'} \to \R$ be the solution of the boundary value problem
\[ \varphi|_{\partial H^{\alpha'}} \equiv \ol{\zeta}_{\alpha'},\ \ \varphi|_{\partial B_\beta} \equiv 0,\ \ \Delta \varphi|_{H^{\alpha'} \setminus \partial B_\beta} \equiv 0.\]

By Lemma \ref{harm::lem::entropy_form} of \cite{M10} and with $\h(\cdot|\cdot)$ denoting the relative the entropy, we know that
\begin{align}
   &\h(\p_{H^{\alpha'}}^{\zeta_{\alpha'}} | \Q_{H^{\alpha'}}^{\wt{\zeta}_{\alpha'},\varphi}) + \h(\Q_{H^{\alpha'}}^{\wt{\zeta}_{\alpha'},\varphi}|\p_{H^{\alpha'}}^{\zeta_{\alpha'}}) \notag\\
=& \sum_{b \in (H^{\alpha'})^*} \E^{\zeta_{\alpha'},\wt{\zeta}_{\alpha'}}\big[ c(b) \nabla \ol{h}(b) \nabla \varphi(b) + O(\CE(b)) \big] \label{ni::eqn::ni_ent_bound}
\end{align}
with $c(b), \CE(b)$ as in Lemma \ref{ni::lem::prob_ratio_bound}.  On $\CA_C$, Lemma \ref{ni::lem::rn_bound} implies \eqref{ni::eqn::ni_ent_bound} is of order $O_C(1)$.  By the non-negativity of the relative entropy, this implies $\h(\Q_{H^{\alpha'}}^{\wt{\zeta}_{\alpha'},\varphi}|\p_{{H^{\alpha'}}}^{\zeta}) \one_{\CA_C} =O_C(1) \one_{\CA_C}$,  hence invoking the elementary entropy inequality (see the proof of \cite[Lemma 5.4.21]{DS89})
\[  \p_{H^{\alpha'}}^{\zeta_{\alpha'}}[Q] 
 \geq \exp\left( - \frac{\h(\Q_{H^{\alpha'}}^{\wt{\zeta}_{\alpha'},\varphi}|\p_{H^{\alpha'}}^{\zeta_{\alpha'}}) + e^{-1}}{\Q_{H^{\alpha'}}^{\wt{\zeta}_{\alpha'},\varphi}[Q]} \right) \Q_{H^{\alpha'}}^{\wt{\zeta}_{\alpha'},\varphi}[Q]\]
we have the lower bound
\begin{align}
 \p_{H^{\alpha'}}^{\zeta_{\alpha'}}[\CB] 
\geq \exp\left( - \frac{O_C(1)}{\p_{H^{\alpha'}}^{\wt{\zeta}_{\alpha'}}[\CB]} \right) \p_{H^{\alpha'}}^{\wt{\zeta}_{\alpha'}}[\CB] \one_{\CA_C} \label{ni::eqn::ni_ent_ineq_bound}.
\end{align}
Note that we used $\varphi|_{\partial B_\beta} \equiv 0$ to conclude $\p_{H^{\alpha'}}^{\wt{\zeta}_{\alpha'}}[\CB] = \Q_{H^{\alpha'}}^{\wt{\zeta}_{\alpha'},\varphi}[\CB]$.  

As $\p_{\wt{D}}^{\wt{\phi}}[ \CB | \wt{\CK}] = \E^{\wt{\phi}}[\p_{H^{\alpha'}}^{\wt{\zeta}_{\alpha'}}[\CB] | \wt{\CK}] \geq 49/50$, we have
\[ \p_{\wt{D}}^{\wt{\phi}}[ \{\p_{H^{\alpha'}}^{\wt{\zeta}_{\alpha'}}[\CB] \geq 1/2\} | \wt{\CK}] \geq c_2(\beta,\gamma,\ol{\Lambda}) > 0.\]
Consequently, taking expectations of both sides of \eqref{ni::eqn::ni_ent_ineq_bound} over $(\zeta_{\alpha'},\wt{\zeta}_{\alpha'})$ conditional on $\CK, \wt{\CK}$, we see that $\p_D^\phi[\CB | \CK] \geq c_1(\alpha,\gamma,\ol{\Lambda}) > 0$, as desired.
\end{proof}

\subsection{Proof of Proposition \ref{ni::prop::ni}}
Assume that $h|\CK,\wt{h} | \wt{\CK}$ are coupled together as in the setup of Lemma \ref{ni::lem::bounded_coupling}.  Let $\beta = (1+\gamma)/2$, $A = B_\beta \setminus B_\gamma$, and $(\xi, \wt{\xi}) = (h|\CK,\wt{h}|\wt{\CK})|_{\partial B_\beta \times \partial B_\beta}$. Let $\E_{\wt{\CK}_\gamma}^{\xi,\wt{\xi}}$ denote the expectation under the stationary coupling of $\p_{B_\beta}^{\xi}$ and $\p_{B_\beta}^{\wt{\xi}}[\cdot | \wt{\CK}_\gamma]$, set $\ol{\xi} = \xi - \wt{\xi}$, and let $\CA_C = \{ \| \ol{\xi}\|_{B_\beta}^{\nabla} \leq C\}.$
Let $\varphi \colon B_\beta \to \R$ be the solution of the boundary value problem
\[ \varphi |_{\partial B_\beta} \equiv \ol{\xi},\ \ \varphi|_{B_\gamma} \equiv 0,\ \ (\Delta \varphi)|_{A} \equiv 0.\]
By the definition of $\CA_C$ and the harmonicity of $\varphi$, we have that
\begin{equation}
 \label{ni::eqn::phi_grad_bound}
 \max_{b \in A^*} | \nabla \varphi(b)| \one_{\CA_C} = O_{C}(r^{-1}) \one_{\CA_C}.
\end{equation}

Taking $F = B_\beta$, $E = B_\gamma$, $\CA = \wt{\CK}_\gamma$ in Lemma \ref{ni::lem::prob_ratio_bound} combined with Lemma \ref{ni::lem::rn_bound} implies that
\begin{equation}
\label{ni::eqn::initial_bound}
 \p_{B_\beta}^\xi[\CK_\gamma] \geq  \exp\left( -O_{\gamma,\ol{\Lambda},C}(1) \right) \p_{B_\beta}^{\wt{\xi}}[\wt{\CK}_\gamma] \one_{\CA_C}.
\end{equation}

To finish the proof of Proposition \ref{ni::prop::ni} it suffices to prove the existence of non-random $c > 0$ so that 
\[ \p_{B_\beta}^{\wt{\xi}}[\wt{\CK}_\gamma] \one_{\CA_C} \geq c\p_{\wt{D}}^{\wt{\phi}}[\wt{\CK}_\gamma] \one_{\CA_C}.\]
Let $f$ denote the density of $\wt{\xi} = (\wt{h}|\wt{\CK})|_{\partial B_\beta}$ and $g$ be the density of $(\wt{h}| \wt{\CK}_\gamma \cap \wt{\CK}) |_{\partial B_\beta}$, both with respect to Lebesgue measure on $\R^{|\partial B_\beta|}$.  The Markovian structure of the field implies that the events $\wt{\CK}, \wt{\CK}_\gamma$ are independent conditional on $\wt{\xi}$.  Consequently, by Bayes' rule we have
\[ \frac{g(\wt{\xi})}{f(\wt{\xi})} = \frac{\p_{B_\beta}^{\wt{\xi}}[\wt{\CK}_\gamma]}{\p_{\wt{D}}^{\wt{\phi}}[ \wt{\CK}_\gamma | \wt{\CK}]}, \ \ \ \text{hence}\ \ \  \p_{B_\beta}^{\wt{\xi}}[\wt{\CK}_\gamma] \one_{\CA_C} = \frac{g(\wt{\xi})}{f(\wt{\xi})} \p_{\wt{D}}^{\wt{\phi}}[\wt{\CK}_\gamma | \wt{\CK}] \one_{\CA_C}.\]
Since we can make $\p[\CA_C]$ as close to $1$ as we like by increasing $C, r$, it thus suffices to show that $g(\wt{\xi}) / f(\wt{\xi})$ is uniformly bounded from zero with uniformly positive probability.  This is exactly the statement of Lemma \ref{ni::lem::density_ratio_bound}.  
\qed

\section{Completing the Proof}
\label{sec::bvi}

We will now explain how the estimates of Sections \ref{sec::gl}-\ref{sec::ni} can be put together to prove Theorems \ref{intro::thm::approximate_martingale} and \ref{intro::thm::scaling_limit}.  Both proofs follow from the strategy of  \cite[Subsections 3.5-3.7]{SS09}, so we will only give an overview of how everything fits together in our setting and leave the reader to \cite{SS09} for more details.

\subsection{Scaling Limits}

 \begin{figure}[h]
     \centering
     \subfigure[$Y_1$ and $Y_2$ are barriers and $\wt{\gamma}^n$ is part of the interface in the setup of Theorem \ref{intro::thm::sle_convergence}.  If $\gamma^n$ does not cross $Y_1, Y_2$, then the strands of $\wt{\gamma}^n$ are forced to connected in the red region.]{
         \includegraphics[height=.55\textwidth]{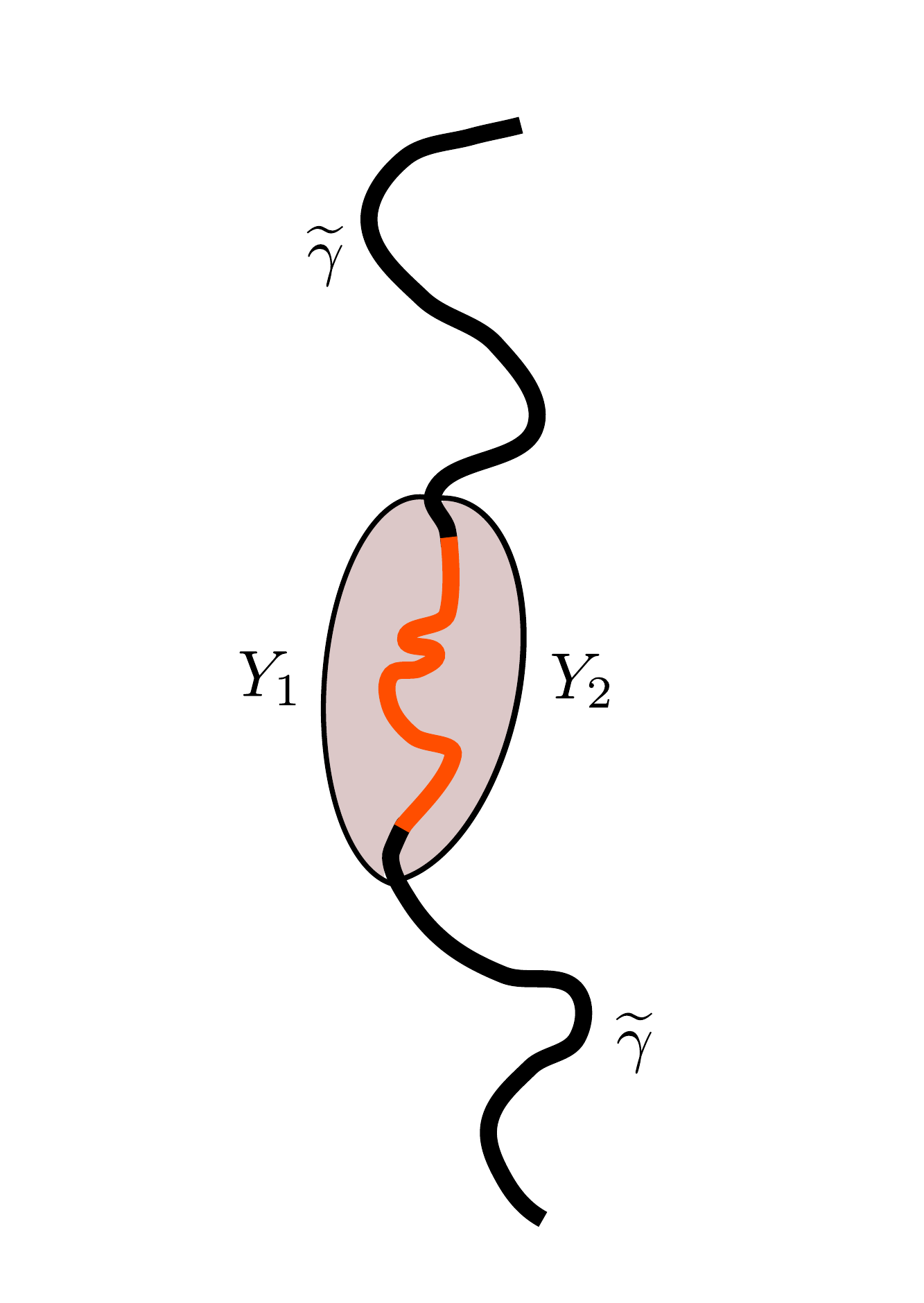}
          \label{bvi::fig::barrier}}
          \hspace{0.05in}
     \subfigure[Using barriers, it is possible to show that internal and external configurations in which the strands are well-separated connect with probability proportional to $(\log R)^{-1}$.]{
         \includegraphics[height=.55\textwidth]{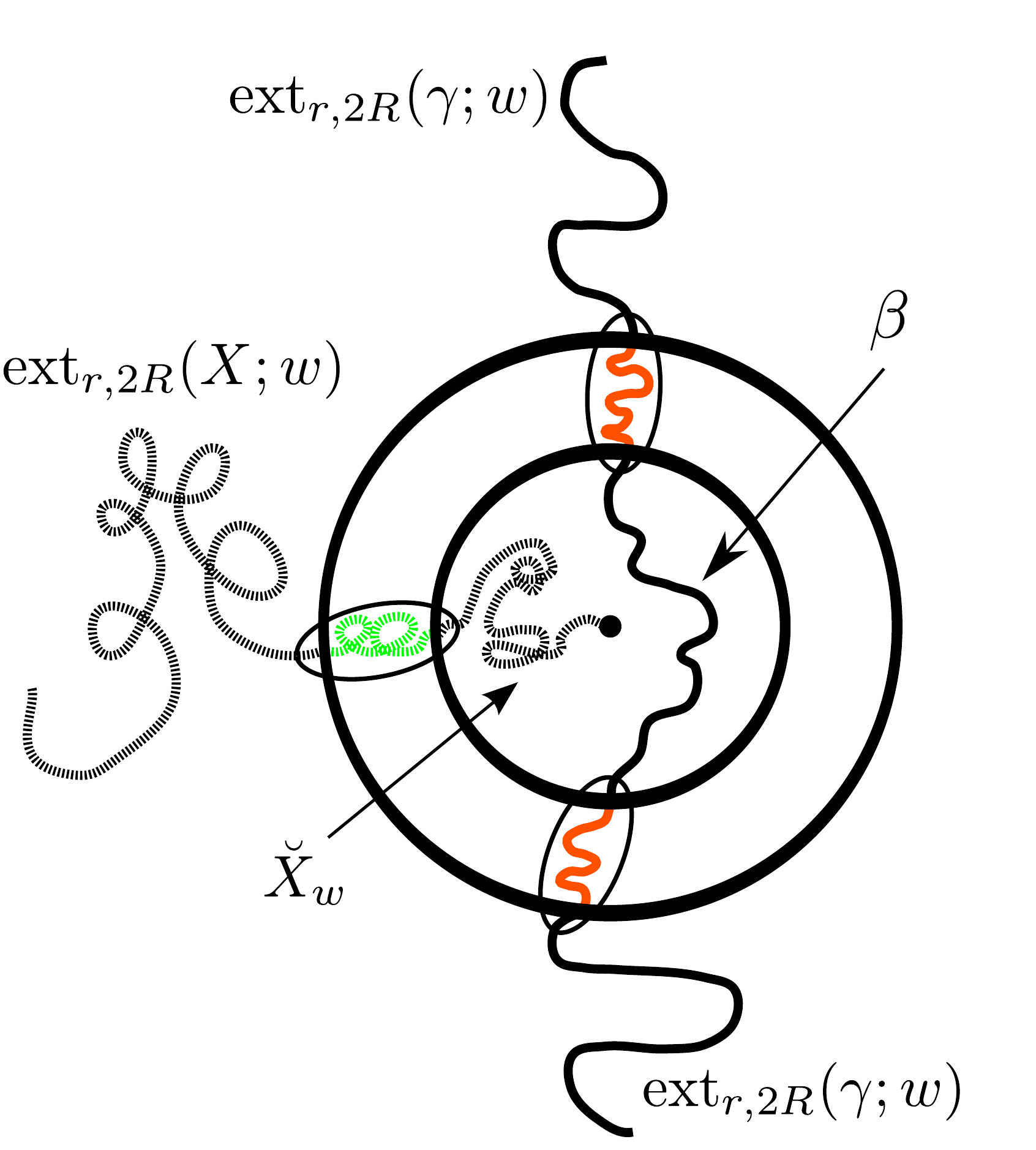} \label{bvi::fig::hookup}}
         \caption{Typical applications of the Barriers Theorem.}
 \end{figure}

The proof of Theorem \ref{intro::thm::scaling_limit} has two main inputs: Proposition \ref{ni::prop::ni} and the notion of a \emph{barrier}, developed in \cite[Subsection 3.4]{SS09}.  Roughly, the latter is a deterministic curve $Y$ through which $\gamma^n$ does not pass with uniformly positive probability (u.p.p.).  Barriers can be used in conjunction with each other to prove that $\gamma^n$ with u.p.p. must pass through certain regions.  A typical usage is illustrated in Figure \ref{bvi::fig::barrier}.  The black line labeled $\wt{\gamma}$ indicates the two strands of $\gamma^n$ from the setup of Theorem \ref{intro::thm::sle_convergence} emanating from $x,y$ and the thin lines $Y_1,Y_2$ indicate barriers.  The Barriers Theorem \cite[Theorem 3.11]{SS09} implies that, conditional on $\wt{\gamma}^n$, with u.p.p. $\gamma^n$ does not pass through $Y_1, Y_2$.  On this event, the two strands of $\wt{\gamma}^n$ are forced to connect since $\gamma^n$ is connected.  The proof of \cite[Theorem 3.11]{SS09} has some dependencies on the specific structure of the DGFF.  The modifications necessary to transfer the result to our setting are deferred to subsection \ref{subsec::nob}.

Barriers can be used in combination with Proposition \ref{ni::prop::ni} to prove Theorem \ref{intro::thm::scaling_limit}.  Assume that we are in the setting of Theorem \ref{intro::thm::scaling_limit}, fix $v_0 \in D_n(\gamma,t,\epsilon)$, $r > 0$, let $X$ be a simple random walk on $\tfrac{1}{n} \Z^2$ initialized at $v_0$ independent of $h^n$, and let $\tau(r)$ be the first time $X$ gets within distance $r n^{-1}$ of $\gamma^n$ with respect to the internal metric of $D_n \setminus \gamma[0,t]$.  Fix $R > r$, which we assume not to vary with and be much smaller than $n$.  We define the \emph{internal configuration} $\Theta_{r,R}$ of $\gamma^n$ and $X$ as seen from $X_{\tau(r)}$ to be the pair $(\internal_{r,R}(\gamma^n), \internal_{r,R}(\breve{X}))$ where $\internal_{r,R}(\gamma^n)$ is the connected component of $\gamma^n \cap B(X_{\tau(r)},Rn^{-1})$ with $\dist(\internal_{r,R}(\gamma^n), X_{\tau(r)}) = rn^{-1}$ re-centered at $X_{\tau(r)}$; ties are broken according to some fixed but unspecified convention.  We remark that by \cite[Lemma 3.17]{SS09} with high probability there is only one such component.  Here, $\internal_{r,R}(\breve{X})$ is the time reversal of $X$ starting at $X_{\tau(r)}$ up until its first exit from $B(X_{\tau(r)},Rn^{-1})$, then translated by $-X_{\tau(r)}$.  The \emph{external configuration} $\Phi_{r,R}$ of $\gamma^n$ and $X$ as seen from $X_{\tau(r)}$ is the pair $(\ext_{r,R}(\gamma^n), \ext_{r,R}(X))$ along with data associated with $D_n$ and the boundary conditions of $h^n$.  Here, $\ext_{r,R}(\gamma^n)$ consists of the two connected components of $\gamma^n \setminus B(X_{\tau(r)},Rn^{-1})$ containing $x_n$ and $y_n$, re-centered at $X_{\tau(r)}$, and $\ext_{r,R}(X)$ is $X$ stopped at its first hitting time of $B(X_{\tau(r)},Rn^{-1})$, re-centered at $X_{\tau(r)}$.

Fix $w \in D_n$ with $\dist(w,\partial D_n)$ much larger than $R n^{-1}$ and let $\CZ_w = \{ X_{\tau(r)} = w\}$.  Let $\zeta = (\beta, \breve{Y}_w)$ be a configuration consisting of an oriented curve $\beta$ in $D_n^*$ coming exactly within distance $rn^{-1}$ to $w$ whose endpoints are contained in $\partial B(w,R n^{-1})$ and $\breve{Y}_w$ a path in $D_n$ connecting $w$ to $\partial B(w,R n^{-1})$.  Let $\CK_\beta$ be the event that $\beta$ is an oriented zero-height interface of $h^n$.  Let $\Phi_{r,R}(w)$ be the external configuration at $w$.  That is, $\Phi_{r,R}(w) = (\ext_{r,R}(\gamma^n;w), \ext_{r,R}(X;w))$, along with the data associated with $D_n$ and the boundary conditions of $h^n$,  where $\ext_{r,R}(\gamma^n;w)$ consists of the two connected components of $\gamma^n$ emanating from $x_n,y_n$ until first hitting $B(w,R n^{-1})$ and $\ext_{r,R}(X;w)$ is the initial segment of $X$ up until it first hits $B(w,R n^{-1})$.  Let $\breve{X}_w$ denote the time reversal of $X$ starting from when it first hits $w$ to its first exit from $B(w,R n^{-1})$.  Using $+w$ to denote translation by $w$, we can write
\begin{align}
  &      \p[ \Theta_{r,R} = \zeta-w | \CZ_w, \Phi_{r, 2R}] \notag\\
=&   \p[ \CK_\beta, \breve{X}_w = \breve{Y}_w | \CZ_w, \Phi_{r, 2R}(w)]
  =  \frac{\p[ \CK_\beta, \breve{X}_w = \breve{Y}_w, \CZ_w | \Phi_{r, 2R}(w)]}{\p[\CZ_w | \Phi_{r, 2R}(w)]} \notag\\
 =& \frac{\p[ \CZ_w | \CK_\beta, \breve{X}_w = \breve{Y}_w, \Phi_{r, 2R}(w)]}{\p[\CZ_w | \Phi_{r, 2R}(w)]} \p[\CK_\beta, \breve{X}_w = \breve{Y}_w | \Phi_{r,2R}(w)] \label{scaling::decomp}
\end{align}
It is an immediate consequence of Proposition \ref{ni::prop::ni} that 
\begin{equation}
\label{scaling::ni}
 \p[ \CK_\beta, \breve{X}_w = \breve{Y}_w | \Phi_{r, 2R}(w)] \asymp q(\zeta)
\end{equation}
for some function $q$ and $a \asymp b$ for $a,b > 0$ means that there exists a universal constant $C > 0$ such that $C^{-1} b \leq a \leq C b$; see \cite[Lemma 3.13]{SS09}.

We are now going to explain how the numerator in \eqref{scaling::decomp} can be estimated when the strands of $\beta, \breve{X}_w$, and $\Phi_{r,2R}(w)$ are \emph{well-separated}.  This means that the three points where $\ext_{r,2R}(\gamma^n;w)$ and $\ext_{r,2R}(X;w)$ enter $B(w,2Rn^{-1})$ are of distance $\epsilon R n^{-1}$, $\epsilon > 0$, from each other and likewise for the exit points of $\beta$ and $\breve{X}_w$ from $B(w, Rn^{-1})$.  Such configurations are said to be of \emph{high-quality}.  The hypothesis that our configurations are of high-quality allows for the application of barriers to show that the strands of $\ext_{r,2R}(\gamma^n;w)$ and $\beta$ connect with each other with u.p.p. and, using standard random walk estimates, $\ext_{r,2R}(X;w)$ hooks up with $\breve{X}_w$ without touching $\gamma^n$ with probability proportional to $(\log R)^{-1}$.  That is, 
\begin{equation}
\label{scaling::hq_hookup}
 \p[\CZ_w | \CK_\beta, \breve{X}_w = \breve{Y}_w, \Phi_{r,2R}(w)] \asymp \frac{1}{\log R}.
\end{equation}

Indeed, the reason for the latter is that the law of $X$ conditional on $\ext_{r,2R}(X;w)$ and $\breve{X}_w = \breve{Y}_w$ is that of the concatenation of a random walk $\wh{X}$ initialized at the first entrance point of $\ext_{r,2R}(X;w)$ to $\partial B(w,2R n^{-1})$ and stopped when it first hits $z_0$, the first exit point of $\breve{X}_w$ from $B(w, R n^{-1})$, along with some number $N$ of random walk excursions $\wh{X}^i$ from $z_0$ back to itself and $\ext_{r,2R}(X;w)$, $\breve{X}_w$.  It is easy to see that with u.p.p., $\wh{X}$ gets within distance $\tfrac{\epsilon}{100} R n^{-1}$ of $z_0$ without hitting the barriers nor $\gamma^n$ and, conditional on this, the probability that $\wh{X}$ hits $z_0$ before hitting $\gamma^n$ is proportional to $(\log R)^{-1}$.  It is also not difficult to see that $N$ is geometric with parameter proportional to $(\log R)^{-1}$ and the probability that a given $\wh{X}^i$ hits $\gamma^n$ before $z_0$ is again proportional to $(\log R)^{-1}$, so the two factors exactly cancel.  See Figure \ref{bvi::fig::hookup} for an illustration of this event and \cite[Lemma 3.14]{SS09} for a precise statement of this result in the case $r=0$.

  \begin{figure}
     \centering
         \includegraphics[height=.4\textwidth]{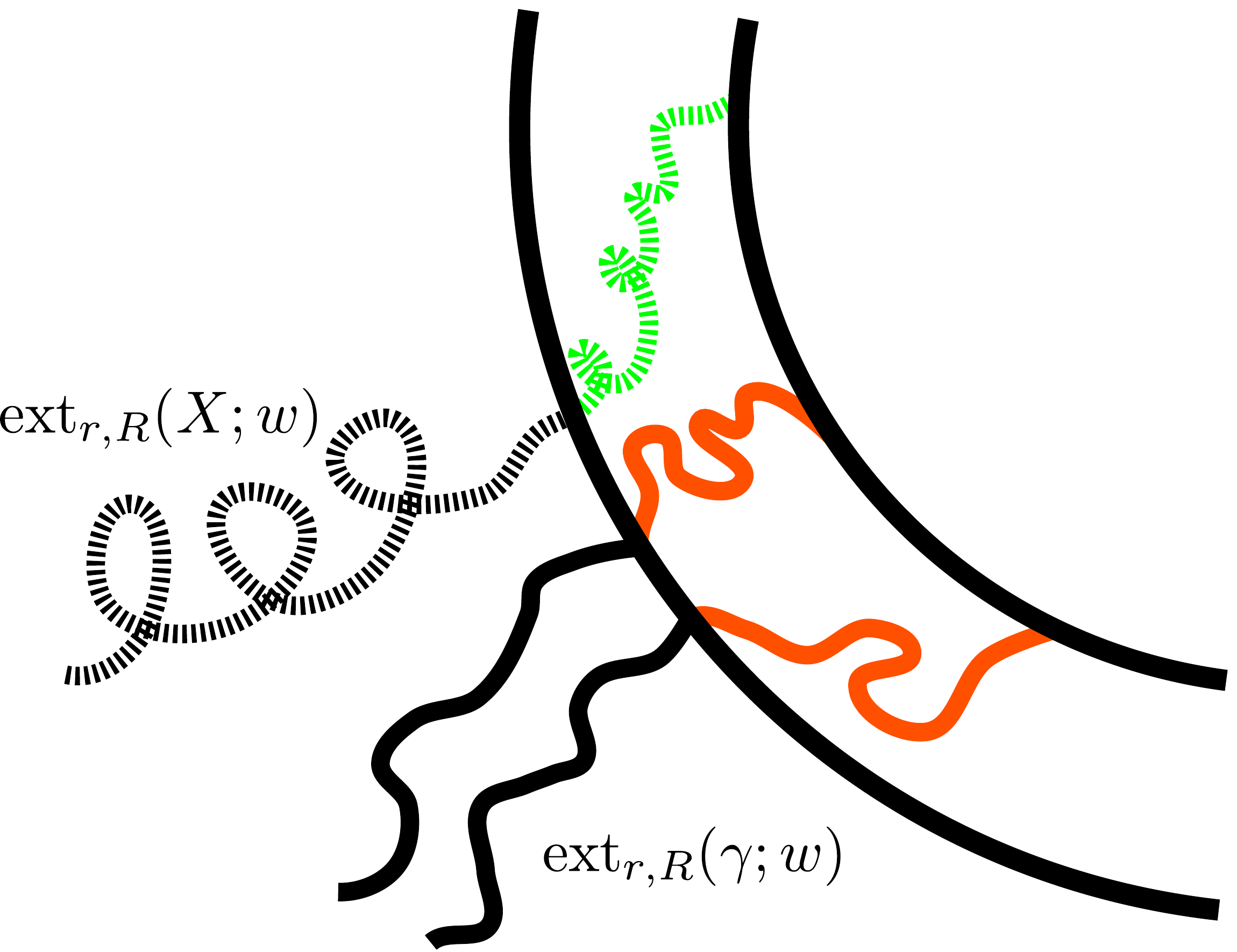}
          \caption{An illustration of the separation lemma, that poorly separated strands with positive probability become well-separated, in the special case of the external configuration.}
          \label{bvi::fig::separation}
 \end{figure}

In order for \eqref{scaling::hq_hookup} to be useful and also to estimate the denominator of \eqref{scaling::decomp}, we need that with u.p.p. high-quality configurations occur.  This is the purpose of the so-called ``separation lemma,'' the second important ingredient in the proof of Theorem \ref{intro::thm::scaling_limit}, which states $\Theta_{r,2R}$ and $\Phi_{r,3R}$ are of high-quality with u.p.p. conditional on $\Theta_{r,R}, \Phi_{r,4R}$ as well as $\CZ_w$, regardless of their quality.  See Figure \ref{bvi::fig::separation} for an illustration and as well as \cite[Lemma 3.15]{SS09} for the precise statement when $r = 0$.  The idea of the proof is to invoke the Barriers Theorem iteratively along with some random walk estimates to show that the strands tend to spread apart.  Exactly the same proof works for $r > 0$.
 
   \begin{figure}[h]
     \centering
     \subfigure[External configurations $\Phi_{r,2R}$ (black) and $\Phi_{r,2R}'$ (blue) and one internal configuration $\Theta_{r,R}$ (red).  The separation lemma combined with \eqref{scaling::decomp} implies that $\p\text{[}\Theta_{r,R} = \zeta | \CZ_w, \Phi_{r,2R}\text{]}$ is comparable to $\p\text{[}\Theta_{r,R} = \zeta | \CZ_w, \Phi_{r,2R}'\text{]}$.]{
         \includegraphics[width=.45\textwidth]{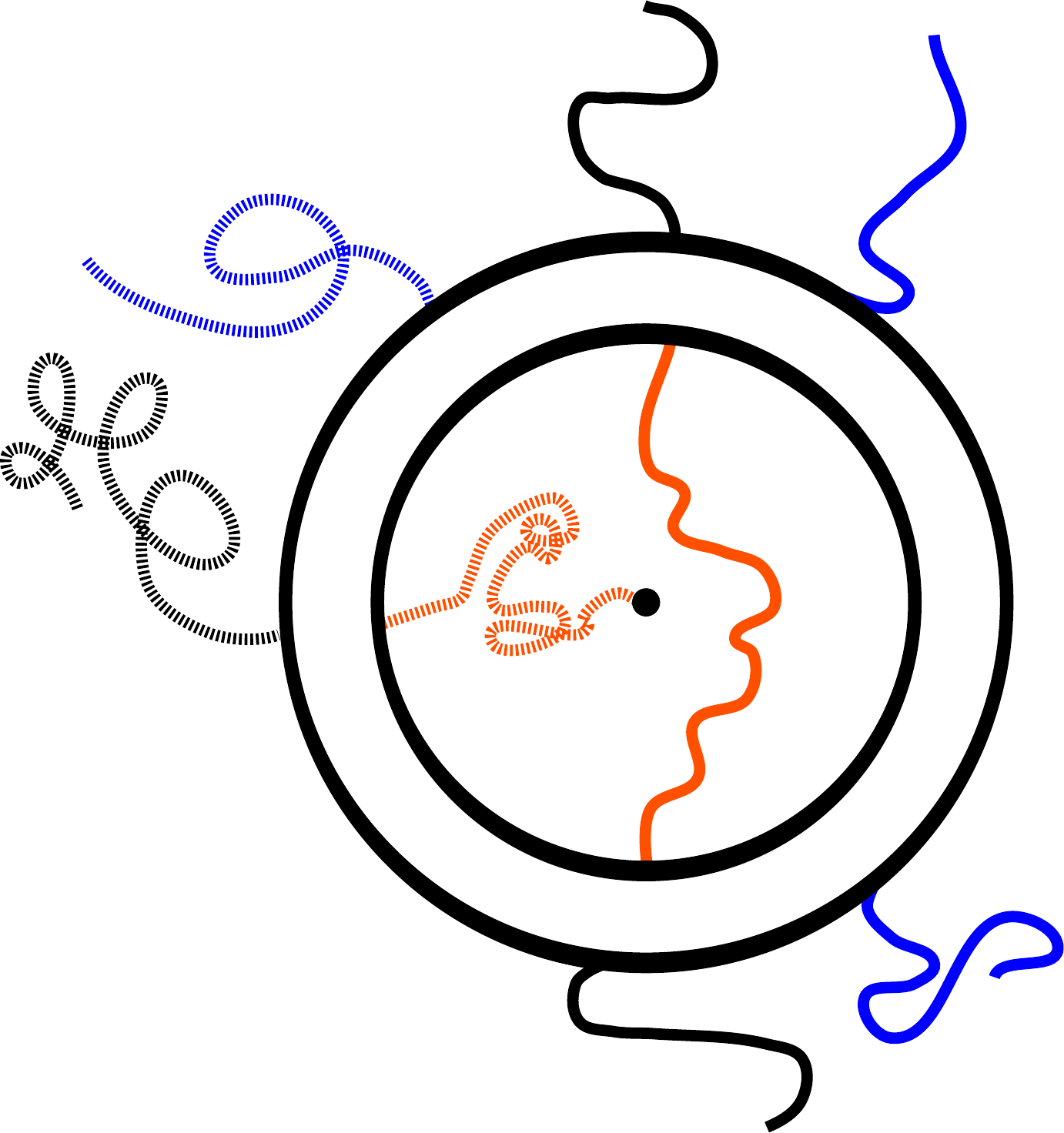}}
          \hspace{0.05in}
     \subfigure[If a step of the coupling argument is successful, then the external configurations in the next step agree in a large annulus, hence future steps in the coupling are more likely to be successful.]{
         \includegraphics[width=.45\textwidth]{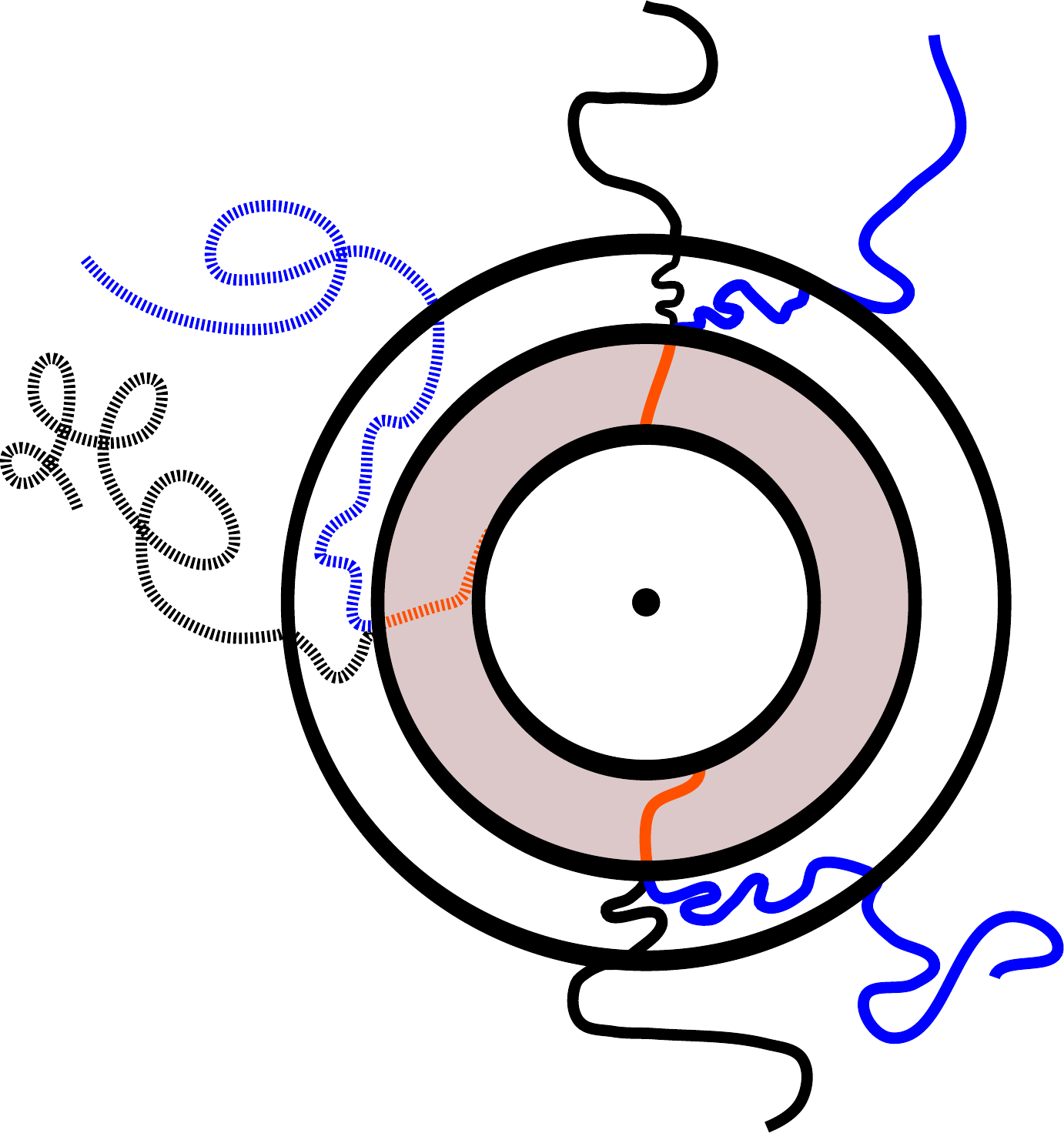}}
          \caption{A typical step in the coupling argument in the proof of Theorem \ref{intro::thm::scaling_limit}.}
          \label{bvi::fig::coupling}
 \end{figure}

Combining the separation lemma with \eqref{scaling::hq_hookup} implies that the estimate analogous to \eqref{scaling::hq_hookup} holds with arbitrary internal and external configurations.  This can also be used to give an estimate of the denominator of \eqref{scaling::decomp}.  Putting everything together thus implies the approximate independence of $\Theta_{r,R}$ from $\Phi_{r,2R}$:
\[ \p[ \Theta_{r,R} = \zeta | \CZ_w, \Phi_{r, 2R}] \asymp \p[ \Theta_{r,R} = \zeta | \CZ_w, \Phi_{r, 2R}']\]
uniformly in $\Phi_{r,2R}, \Phi_{r,2R}'$.  See \cite[Corollary 3.16]{SS09}, where  we again emphasize that Proposition \ref{ni::prop::ni} takes the role of \cite[Proposition 3.7]{SS09}.

Theorem \ref{intro::thm::scaling_limit} can now be proved using the following iterative coupling argument.  Fix $R_1$ very large and let $(R_k : k \geq 1)$ be a sequence decreasing appropriately quickly so that the previous lemmas always apply for the pairs $(r,R_k)$ and $(r,R_{k+1})$.  We shall assume that we are always conditioning on $\CZ_w$ where $w$ satisfies $\dist(w, \partial D) \geq 100 R_1 n^{-1}$.  Suppose we start with two arbitrary external configurations $\Phi_{r,R_1},\Phi_{r,R_1}'$, which we emphasize could come from different domains, boundary conditions, starting points of $X$, or all three.  We couple $\Theta_{r,R_2}|\Phi_{r,R_1}$, the conditional law of $\Theta_{r,R_2}$ given $\Phi_{r,R_1}$, and $\Theta_{r,R_2}'|\Phi_{r,R_1}$ to maximize the probability of success, that is $\Theta_{r,R_2} = \Theta_{r,R_2}'$.  In subsequent steps, we couple $\Theta_{r,R_{\ell+1}}|\Phi_{r,R_\ell}$ and $\Theta_{r,R_{\ell+1}}'|\Phi_{r,R_\ell}'$ to maximize the probability of success.  This probability is always uniformly positive regardless of whether or not previous stages of coupling have succeeded.  Thus we can make the probability that there is at least one successful coupling as close to $1$ as desired by choosing $R_1$ large enough to allow for sufficiently many steps of this procedure.  Conditional on the event that there is at least one success, the probability that the terminal internal configurations agree is also very close to $1$, depending on the rate at which the $R_\ell$ are decreasing, since then the external configurations at some step agree in a large annulus.  Rescaling by $n$, the theorem clearly follows.  A typical step of this procedure is illustrated in Figure \ref{bvi::fig::coupling}.  See Lemmas 3.19, 3.20, as well as Theorem 3.21 of \cite{SS09} for a proof when $r=0$.

Note that $\internal_{r,R}(\gamma^n) \cap B(0,\wt{r} n^{-1})$, $\wt{r} > r$ much smaller than $R$, is not necessarily the same as $\gamma^n \cap B(X_{\tau(r)}, \wt{r} n^{-1}) - X_{\tau(r)}$ since it could be that $\gamma^n$ makes multiple excursions from $B(X_{\tau(r)}, R n^{-1})$ to $B(X_{\tau(r)}, \wt{r} n^{-1})$.  This possibility is ruled out w.h.p. by \cite[Lemma 3.17]{SS09}.

The argument we have described thus far gives Theorem \ref{intro::thm::scaling_limit} in the special case that the $\CF_t^n$-stopping time $\tau$ is when $\gamma^n$ hits $y_n$.  We can repeat the same procedure for general $\tau$ provided we make the following modifications.  We let $\tau(r)$ be the first time that $X$ gets within distance $r$ of $\gamma^n|_{[0,\tau]}$ and change the definitions of $\Theta_{r,R}$ and $\Phi_{r,R}$ accordingly.  Exactly the same coupling procedure goes through provided $w$ is far from both $\partial D_n$ as well as the tip $\gamma^n(\tau)$, we just need to be sure that the result does not depend on $\tau$.  This, however, is not the case since it is not difficult to see that the coupling works even if the initial external configurations $\Phi_{r,R}, \Phi_{r,R'}$ arise from stopping $\gamma^n$ at different times $\tau,\tau'$.

 \subsection{Boundary Values}
 
  \begin{figure}
     \centering
         \includegraphics[width=.40\textwidth]{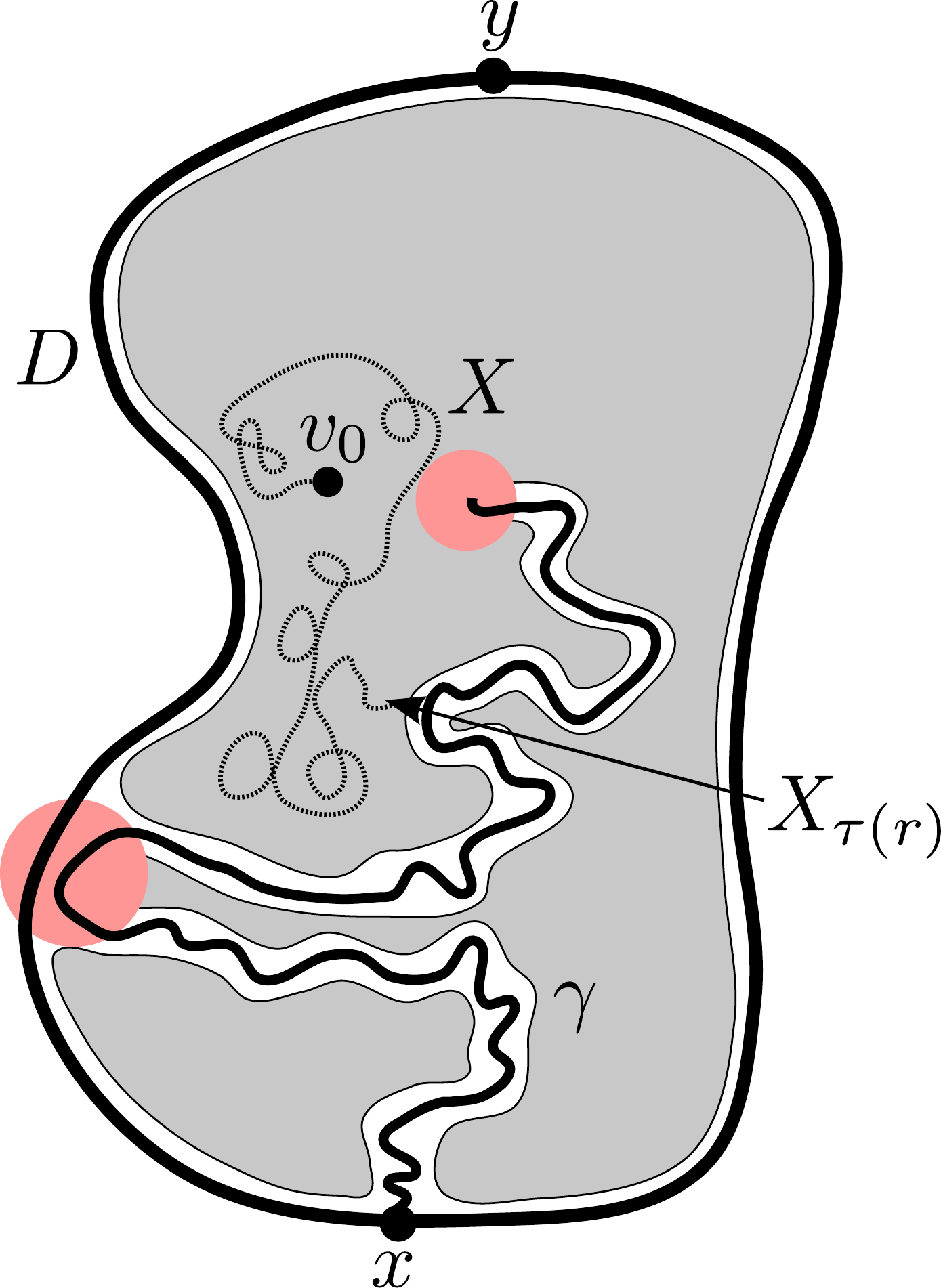}
          \caption{The local geometry of $\gamma^n$ as seen from $X_{\tau(r)}$ looks like $\gamma_r$, the bi-infinite path sampled from $\nu_r$ of Theorem \ref{intro::thm::scaling_limit}, provided $X_{\tau(r)}$ is sufficiently far from the tip of $\gamma^n$ and $\partial D_n$.  Thus it is important in the proof of Theorem \ref{intro::thm::approximate_martingale} to show that such regions have small harmonic measure.}
 \end{figure}

We will now explain how Theorem \ref{intro::thm::approximate_martingale} can be proved by combining Theorems \ref{intro::thm::harmonic_up_to_boundary} and \ref{intro::thm::scaling_limit}.  Fix $r \geq 0$ and let $\gamma_r$ be a bi-infinite path in $(\Z^2)^*$ sampled according to the probability $\nu_r$ of Theorem \ref{intro::thm::scaling_limit}.  Let $V_+(\gamma_r)$ be the set of vertices in $\Z^2$ which are adjacent to $\gamma_r$ and are contained in the same connected component of $\Z^2 \setminus \gamma_r$ as $0$ and $V_-(\gamma_r)$ the set of all other vertices adjacent to $\gamma_r$.  From Proposition \ref{hic::prop::hic}, it is clear that we can construct a random field $h_r \colon \Z^2 \to \R$ that given $\gamma_r$ has the law of the GL model on $\Z^2$ conditioned on the events 
\[ \bigcap_{x \in V_+(\gamma_r)} \{ h_r(x) > 0\}\  \text{ and } \bigcap_{x \in V_-(\gamma_r)} \{ h_r(x) < 0\}.\]
Let 
\begin{equation}
\label{bvi::eqn::lambda}
 \lambda_r = \E[h_r(0)].
\end{equation}

Let $\CF_t^n = \sigma(\gamma^n(s) : s \leq t)$ and let $\tau$ be any $\CF_t^n$-stopping time.  Fix a point $v_0 \in D_n(\gamma,\tau,\epsilon)$, let $X$ be a random walk initialized at $v_0$, and $\tau(r)$ the first time it gets within distance of $r n^{-1}$ of $\partial D_n$ or $\gamma^n[0,\tau]$.  Theorem \ref{intro::thm::scaling_limit} implies that the local geometry of $\gamma^n$ near $X_{\tau(r)}$ looks like $\gamma_r$ provided $X_{\tau(r)}$ lands on the positive side of $\gamma^n$ and is neither close to the tip of $\gamma^n[0,\tau]$ nor $\partial D_n$, which, in view of \cite[Lemmas 3.23 and 3.26]{SS09}, happens with low probability.  Letting $V_+^r(\gamma^n,\tau)$ (resp. $V_-^r(\gamma^n,\tau)$) be the set of vertices in $D_n$ with distance exactly $rn^{-1}$ from the positive (resp. negative) side of $\gamma^n[0,\tau]$,  it thus follows from Proposition \ref{hic::prop::hic} that 
\begin{equation}
\label{bvi::eqn::sample_mean}
 \E\big[ (h^n(X_{\tau(r)}) - \lambda_r) \one_{\{X_{\tau(r)} \in V_+^r(\gamma^n,\tau)\}}\big]
 \end{equation}
is small and similarly when $+$ and $-$ are swapped.  It is then possible to show that
\begin{equation}
\label{bvi::eqn::sample_mean_cond}
 \E\bigg[ \left( \E\big[ (h^n(X_{\tau(r)}) - \lambda_r) \one_{\{X_{\tau(r)} \in V_+^r(\gamma^n,\tau)\}} \big| \gamma^n[0,\tau] \big] \right)^2 \bigg]
 \end{equation}
is small by considering independent copies and arguing that the corresponding random walks are unlikely to hit $\gamma^n[0,\tau]$ close to each other, then invoke the approximate independent of internal and external configurations.  It is important here that the internal configuration of one random walk is contained in the external configuration of the other, but this happens with high probability by \cite[Lemma 3.17]{SS09}.  Theorem \ref{intro::thm::harmonic_up_to_boundary} then implies that, with $\psi_\tau^{n,r}$ the discrete harmonic function in $D_n(\gamma,\tau,rn^{-1})$ with boundary values $\pm \lambda_r$ on $V_\pm^r(\gamma^n,\tau)$ and the same boundary values as $h^n$ otherwise and $M_\tau^n(x) = \E[ h^n(x) | \CF_{\tau}^n]$, we have that
\begin{equation}
\label{bvi::eqn::harmonic_error}
 \E\big[ \max_{x \in D_n(\gamma,\tau,\epsilon)} |M_\tau^n(x) - \psi_\tau^{n,r}(x)|\big] \leq \delta
\end{equation}
for $\delta = \delta(r)$ with $\lim_{r \to \infty} \delta(r) = 0$.

To finish proving Theorem \ref{intro::thm::approximate_martingale}, it is left to show that the sequence $(\lambda_r)$ has a positive and finite limit $\lambda$:
\begin{lemma}
There exists $\lambda = \lambda(\CV) \in (0,\infty)$ such that
\[ \lim_{r \to \infty} \lambda_r = \lambda.\]
\end{lemma}
\begin{proof}
Suppose that $h_r, \gamma_r$ are as in the paragraph just above \eqref{bvi::eqn::lambda}.  By construction, $0$ is separated from $V_-(\gamma_r)$ by $V_+(\gamma_r)$.  Proposition \ref{hic::prop::exp_bounds} thus implies the existence of non-random $\lambda_0 = \lambda_0(\CV) > 0$ such that $\lambda_0^{-1} \leq \E[h_r(0) | \gamma_r] \leq \lambda_0$, hence also $\lambda_0^{-1} \leq \lambda_r \leq \lambda_0$.  Thus we just need to show that $(\lambda_r)$ is Cauchy, which in turn is an immediate consequence of \eqref{bvi::eqn::harmonic_error}.  Indeed, we first fix $r_1,r_2 > 0$ then $n$ very large so that \eqref{bvi::eqn::harmonic_error} holds for both $\psi^{n,r_1}$ and $\psi^{n,r_2}$ simultaneously.  We omit the subscript to indicate the functions corresponding to the entire path $\gamma^n$.  Applying the triangle inequality, we have
\begin{equation}
\label{bvi::eqn::harmonic}
 \E\big[ \max_{x \in D_n(\gamma,\epsilon)} |\psi^{n,r_1}(x) - \psi^{n,r_2}(x)| \big] \leq 2\delta(r_1) \vee \delta(r_2).
\end{equation}
Fix $x \in D_n(\gamma,\epsilon)$ very close to the positive side of $\gamma^n$ and away from $\partial D_n$.  We can always make such a choice so that $|\psi^{n,r_i}(x) - \lambda_{r_i}| \leq 2\delta(r_1) \vee \delta(r_2)$, which implies $|\lambda_{r_1} - \lambda_{r_2}| \leq 4\delta(r_1) \vee \delta(r_2)$.
\end{proof}

\subsection{The Barriers Theorem for the GL Model}
\label{subsec::nob}

We conclude by explaining how to redevelop the relevant parts of \cite[Subsections 3.3-34]{SS09} so that the proof of \cite[Theorem 3.11]{SS09} is applicable in the GL setting.  To keep the exposition compact, we will just indicate the necessary changes without repeating statements and proofs.

We begin with \cite[Lemma 3.8]{SS09}, ``Narrows.''  By the Brascamp-Lieb inequalities (Lemma \ref{bl::lem::bl_inequalities}), the the conditional variance upper bound \cite[Equation (3.25)]{SS09} also holds for the GL model.  The only claim that needs to be reproved is that $b \geq 0$ if $\delta > 0$ is sufficiently small since, while the inequality $\E[ h(u) | \CK] \geq c^{-1}$ for $u \in V_+$ does hold, we do not have the linearity of the mean in its boundary values.  Nevertheless, we get the desired result by invoking Proposition \ref{hic::prop::exp_bounds}.  The rest of the proof is exactly the same.  The ``Domain boundary narrows,'' \cite[Lemma 3.9]{SS09}, goes through with the same modifications.

We now turn to \cite[Lemma 3.10]{SS09}, ``Obstacle.''  One of the ingredients of the proof is equation (3.5) from \cite[Lemma 3.1]{SS09}.  This can be established in the GL setting by writing the second moment as the sum of the variance and the square of the mean, bounding the former using the Brascamp-Lieb inequality and the corresponding bound in \cite{SS09}, then controlling the mean using Lemma \ref{hic::lem::exp_upper_bound}.  Note that while our setting does not satisfy the hypotheses of Lemma \ref{hic::lem::exp_upper_bound}, the proof is in fact very general and still applies here.  The proof in \cite{SS09} breaks down at (3.29) since we do not have the exact harmonicity of the mean.  However, using Lemma \ref{hic::lem::harmonic_boundary} we can replace \cite[Equation (3.29)]{SS09} with 
\[ \E\big[ h(x) | \CK, \CQ, \beta \big] \leq O_{\ol{\Lambda}}(1) + \frac{\|g\|_\infty}{100} - \sum_{u \in U'} p_u g(u).\]
Since we may assume without loss of generality that $\| g\|_\infty$ is larger than any fixed constant, we can replace the above with
\[ \E\big[ h(x) | \CK, \CQ, \beta \big] \leq \frac{\|g\|_\infty}{50} - \sum_{u \in U'} p_u g(u),\]
from which the rest of the proof goes through without any changes.

The remaining part of the proof of \cite[Theorem 3.11]{SS09} that needs modification is the application of \cite[Lemma 3.6]{SS09}, for which we offer the following substitute.  Note that this result is in fact different from \cite[Lemma 3.6]{SS09} since we work with entropies as there is no analog of the Cameron-Martin formula for the GL model.  Recall that $\Q_D^{\psi,g}$ is the law of $h^\psi - g$ for $h^\psi \sim \p_D^\psi$.

\begin{lemma}
\label{nob::lem::distortion}
Let $D \subseteq \Z^2$ be bounded and let $g \colon D \to \R$ satisfy $g = 0$ on $\partial D$.  Then
\[ \h(\p_D^\psi|\Q_D^{\psi,g}) + \h(\Q_D^{\psi,g}|\p_D^\psi) \leq C \sum_{b \in D^*} |\nabla g(b)|^2\]
for $C = C(\CV)$.  In particular, if $A$ is any event then
\begin{align*}
   \exp\left(-\frac{C \sum_{b \in D^*} |\nabla g(b)|^2 + e^{-1}}{\Q^{\psi,g}[A]}\right) \leq
   \frac{\p^\psi[A]}{\Q^{\psi,g}[A]} \leq 
   \exp\left(\frac{C \sum_{b \in D^*} |\nabla g(b)|^2 + e^{-1}}{\p^{\psi}[A]}\right) 
\end{align*}
\end{lemma}
\begin{proof}
The latter claim is an immediate consequence of the first part of the lemma, the non-negativity of the entropy, and the entropy inequality (see the proof of \cite[Lemma 5.4.21]{DS89}) 
\[ \log\left( \frac{\mu(A)}{\nu(A)} \right) \geq - \frac{\h(\mu|\nu) + e^{-1}}{\nu(A)}.\]
Using a proof similar to Lemma \ref{harm::lem::entropy_form} of \cite{M10}, we have
\begin{align*}
  &  \h(\Q_D^{\psi,g}| \p_D^\psi) + \h(\p_D^{\psi}|\Q_D^{\psi,g})\\
=& \sum_{b \in D^*} \E^{\psi} \left(  \int_0^1 \CV'(\nabla (h + s g)(b)) ds - \int_0^1 \CV'(\nabla [(h + (s-1)g) ](b))ds\right) \nabla g(b)\\
=& \sum_{b \in D^*} \left( \E^{\psi} \left(  \int_0^1 \CV'(\nabla h(b)) ds - \int_0^1 \CV'(\nabla h(b))ds\right) \nabla g(b) + O( (\nabla g(b))^2) \right)\\
=& \sum_{b \in D^*} O( (\nabla g(b))^2).
\end{align*}
\end{proof}

Using the same notation as the proof of \cite{SS09} in the paragraph after equation \cite[Equation (3.32)]{SS09}, we get the same bound
\[ \p\bigg[ \p[ \wh{\gamma}_g \cap (\cup Y) = \emptyset | \CK, h_U]  > 1/10 \big| \CK \bigg] > 1/10.\]
Suppose that $h_U$ is chosen so that the inner inequality holds.  Invoking Lemma \ref{nob::lem::distortion} then implies
\begin{align*}
 &\p[ \wh{\gamma} \cap (\cup Y) = \emptyset| \CK, h_U] = 
  \Q^{g}[\wh{\gamma}_g \cap (\cup Y) = \emptyset | \CK, h_U]\\
\geq& \frac{1}{10} \exp\left( - 10 C \sum_{b \in D^*} |\nabla g(b)|^2 - 10e^{-1} \right)
  \geq \rho > 0
\end{align*}
where $\rho$ depends only on $\epsilon, \ol{\Lambda},m$.  This is equivalent to the statement in \cite{SS09} that
\[ O_{\epsilon,\ol{\Lambda},m}(1) \p[ \wh{\gamma} \cap (\cup Y) = \emptyset | \CK, h_U] \geq 1.\]

\section*{Acknowledgements}  I thank Amir Dembo and Scott Sheffield for very helpful comments on an earlier version of this manuscript, which led to many significant improvements in the exposition.

\bibliographystyle{acmtrans-ims.bst}
\bibliography{gl_level_lines}

\end{document}